\newcommand{\fa}{{\frak a}} 
\newcommand{\ttau}{\vartheta}
\newcommand{\wt}{\widetilde}
\newcommand{\wh}{\widehat}
\newcommand{\beqa}{\begin{eqnarray}}
\newcommand{\eeqa}{\end{eqnarray}}
\newcommand{\e}{\varepsilon}
\newcommand{\pt}{\partial}
\newcommand{\rd}{{\rm d}}
\newcommand{\bR}{{\mathbb R}}
\newcommand{\non}{\nonumber}
\newcommand{\abs}[1]{\left| #1 \right|}
\newcommand{\tr}{\mbox{Tr\,}}
\newcommand{\bx}{{\bf{x}}}
\newcommand{\by}{{\bf{y}}}
\newcommand{\bv}{{\bf{v}}}
\newcommand{\bw}{{\bf{w}}}
\newcommand{\bz}{{\bf {z}}}
\newcommand{\al}{\alpha}
\newcommand{\be}{\begin{equation}}
\newcommand{\ee}{\end{equation}}
\newcommand{\barv}{ {[ v ]}}
\newcommand{\barZ}{ {[ Z ]}}
\newcommand{\la}{\lambda}
\newcommand{\om}{{\omega}}
\renewcommand{\th}{\theta}
\newcommand{\cL}{{\mathscr L}}
\newcommand{\cG}{{\mathcal G}}
\newcommand{\cH}{{\mathcal H}}
\newcommand{\ov}{\overline}
\newcommand{\re}{{\mathfrak{Re} \, }}
\newcommand{\im}{{\mathfrak{Im} \, }}
\newcommand{\E}{{\mathbb E }}
\newcommand{\R}{{\mathbb R }}
\newcommand{\N}{{\mathbb N}}
\newcommand{\CC}{{\mathbb C }}
\newcommand{\RR}{{\mathbb R }}
\newcommand{\NN}{{\mathbb N}}
\newcommand{\bl}{{\boldsymbol \lambda}}
\newcommand{\bt}{{\boldsymbol \theta}}
\renewcommand{\P}{{\mathbb P}}
\newcommand{\ind}{{\,\mathrm{d}}}
\newcommand{\bS}{ {\bf  S}}
\newtheorem{theorem}{Theorem}
\newtheorem{corollary}[theorem]{Corollary}
\newtheorem{lemma}[theorem]{Lemma}
\newcommand{\qed}{\hfill\fbox{}\par\vspace{0.3mm}}
\newenvironment{proof}{{\bf Proof.}} {\hfill\qed}
\numberwithin{equation}{section}
\numberwithin{theorem}{section}
\numberwithin{definition}{section}
\numberwithin{remark}{section}
\newcommand{\var}{{\mathrm{var}}}
\newcommand{\deq}{\mathrel{\mathop:}=}
\renewcommand{\varpi}{\vartheta}
\title{Universality of local spectral statistics of random matrices}
\author{
L\'aszl\'o Erd\H os${}^1$\thanks{Partially supported
by SFB-TR 12 Grant of the German Research Council}, 
Horng-Tzer Yau${}^2$\thanks{Partially supported
by NSF grants DMS-0757425, 0804279} 
 \\\\
Institute of Mathematics, University of Munich, \\
Theresienstr. 39, D-80333 Munich, Germany \\ lerdos@math.lmu.de ${}^1$ \\ \\
Department of Mathematics, Harvard University\\
Cambridge MA 02138, USA \\  htyau@math.harvard.edu ${}^2$
}
\begin{document}
\date{Jan 30, 2012}

\maketitle

\begin{abstract}
The Wigner-Dyson-Gaudin-Mehta   conjecture asserts that the local eigenvalue 
statistics of large random matrices exhibit
universal behavior  depending only on the symmetry
class of the matrix ensemble. For invariant matrix models, the eigenvalue distributions are given 
by a log-gas with  potential $V$ and inverse temperature $\beta = 1, 2, 4$, corresponding to 
the orthogonal, unitary and symplectic ensembles. 
For $\beta \not  \in \{1, 2, 4\}$, there is no natural random matrix  ensemble  behind this model,
 but the statistical physics interpretation of the log-gas
is still valid for all $\beta > 0$. The universality conjecture for invariant ensembles asserts that  
the local eigenvalue statistics are independent of $V$.  
In this article, 
we review our recent solution to the universality conjecture for both  
 invariant and non-invariant ensembles. We will also demonstrate that the  local ergodicity 
of the  Dyson Brownian motion   is   the intrinsic  mechanism behind the 
universality.  Furthermore, we review the solution of Dyson's conjecture on the local relaxation time 
of the   Dyson Brownian motion.  Related questions 
such as delocalization of eigenvectors and local version of Wigner's semicircle law
will also be discussed. 
\end{abstract}

{\bf AMS Subject Classification (2010):} 15B52, 82B44

\medskip

{\it Keywords:}  Random matrix, local semicircle law,
Tracy-Widom distribution, Dyson Brownian motion.

\medskip
\medskip

\begin{minipage}[c]{5.5in}
{\it ``Perhaps I am now too courageous when I try to guess the distribution of the distances between
successive levels (of energies of heavy nuclei).   Theoretically, the 
situation is quite simple if one attacks the problem  in a 
simpleminded fashion.  The question is simply what are the 
 distances  of the characteristic values of  a symmetric
matrix with random coefficients."  }
\end{minipage}

\medskip 
\centerline{\qquad\qquad\qquad\qquad\qquad\qquad Eugene Wigner on 
the Wigner surmise, 1956 }

\section{Introduction}

What do  the  eigenvalues of a typical  large matrix look like? 
Do we expect certain universal patterns of eigenvalue statistics to emerge? Although random matrices 
appeared already in a concrete statistical application by Wishart in 1928 \cite{Wish}, 
these  natural questions were not raised until the pioneering work \cite{W} of E. Wigner in
the 1950's.
To make the problem simpler, 
we restrict ourselves to either real symmetric or complex Hermitian matrices 
so that the eigenvalues are real.  For definiteness, we 
consider  $N\times N$ square  matrices 
$H=H^{(N)} = (h_{ij})$ with matrix elements having mean zero and variance $1/N$, i.e., 
\be
   \E\, h_{ij} = 0,   \quad \E |h_{ij}|^2= \frac{1}{N}
 \qquad i,j =1,2,\ldots, N.
\label{centered}
\ee
The random variables  $h_{ij}$,   $i,j=1, \ldots, N$  are 
real or  complex  independent random variables 
subject to 
the symmetry constraint $h_{ij}= \ov h_{ji}$.   These ensembles of random matrices 
are called {\it Wigner matrices.}  We will always consider
the limit 
as the matrix size goes to infinity, i.e., $N\to\infty$.

The first rigorous result about the spectrum of a random matrix of this type
is the famous {\it Wigner semicircle law}
 \cite{W} which
  states that the  empirical densities of the eigenvalues, $\lambda_1, \lambda_2, \ldots,
\lambda_N$,  of  large 
 symmetric or Hermitian matrices, after proper normalization
such as \eqref{centered}, are given by 
\be\label{sc}
\varrho_N (x) := \frac{1}{N}\sum_{j=1}^N \delta(x-\lambda_j)\rightharpoonup
\varrho_{sc} (x) := \frac 1 { 2 \pi} \sqrt {(4 - x^2)_+}
\ee
in the weak limit as $N \to \infty$. The limit density is
independent  of the details of the distribution of $h_{ij}$.
The motivation for Wigner was to find a  phenomenological
 model for the  energy gap statistics   of large atomic nuclei since 
 the energy levels of large quantum systems 
are impossible to compute from first principles.
After several attempts, Wigner was convinced  that random matrices were the right models. 
 Besides the semicircle law, he also predicted that the 
 eigenvalue gap distribution in the bulk of the spectrum is given by the {\it Wigner surmise}, 
e.g. in the case of symmetric matrices,
$$
   \P\Big( \frac{s}{N\varrho} \le \lambda_j-\lambda_{j-1} 
\le  \frac{s+\rd s}{N\varrho}\Big) \approx 
\frac{\pi s}{2}
\exp\big( -  \frac{\pi}{4} s^2\big)\rd s,
$$
where $\varrho$ is the local density of eigenvalues  (see \cite{M} for an overview).

Wigner's proof of the semicircle law was  a
 moment method via computing $\E\, \tr H^n$ for each $n$.
The Wigner surmise was much more difficult to understand.
In the pioneering work 
by Gaudin \cite{Gau} 
 the exact gap distributions of random matrices with Gaussian 
distribution for matrix elements were 
computed in terms  of a Fredholm determinant 
involving  Hermite polynomials. Hermite polynomials were 
first  introduced in the context of random matrices by Mehta and Gaudin \cite{MG} earlier.
 Dyson and Mehta \cite{M2, Dy1, Dy2}
have later extended this exact calculation 
to correlation functions and to other symmetry classes.
 To keep our presentation simple, 
we state the corresponding results in terms of the
eigenvalue correlation functions for Hermitian 
$N\times N $  matrices.
If  $p_N(\lambda_1, \lambda_2, \ldots , \lambda_N)$ denotes the
joint probability density of the  (unordered) eigenvalues, then
the $n$-point correlation functions (marginals) are defined by
\be
  p^{(n)}_N(\la_1, \la_2, \ldots, \la_n):
 = \int_{\bR^{N-n}} p_N( \la_1, \ldots,\la_n, \la_{n+1},
\ldots \la_N) \rd\la_{n+1} \ldots \rd\la_{N}.
\label{pk}
\ee
In the Gaussian case, the joint probability
density of the eigenvalues can be expressed explicitly as
\be
    p_N(\lambda_1, \lambda_2, \ldots , \lambda_N) 
  = \mbox{const.} \prod_{i<j} (\lambda_i-\lambda_j)^2 \prod_{j=1}^N
   e^{- \frac{1}{2}N  \sum_{j=1}^N \lambda_j^2}.
\label{expli28}
\ee
The Vandermonde determinant structure allows one to compute
the $k$-point  correlation functions in the large $N$ limit via 
Hermite polynomials that are the
orthogonal polynomials with respect to the Gaussian weight function.

The result of Dyson, Gaudin and Mehta  asserts that  for 
any fixed energy $E$ in the bulk of the spectrum, i.e., $|E| < 2$,  
the small scale behavior of $ p^{(n)}_N$ is given explicitly by 
\be
  \frac{1}{[\varrho_{sc}(E)]^n}
 p_N^{(n)}\Big( E+ \frac{\al_1}{N\varrho_{sc} (E)}, E + \frac{\al_2}{N\varrho_{sc}(E)},
 \ldots ,E+ \frac{\al_n}{N\varrho_{sc}(E)}\Big) \rightharpoonup  
\det \big( K(\al_i - \al_j)\big)_{i,j=1}^n
\label{sine}
\ee
where $K$ is the  celebrated sine kernel
\begin{align}
K(x,y)
  =  \frac{\sin  \pi (x-y)}{\pi(x-y)}.
\end{align}
Note that the limit in \eqref{sine} is independent of the energy $E$
as long as it is in the bulk of the spectrum. 
 The rescaling by a factor $N^{-1}$ of the correlation functions in \eqref{sine}
corresponds to the typical distance between consecutive eigenvalues 
and we will refer to  the law under  such scaling
 as  {\it local  statistics}.
Similar but much more complicated  formulas for symmetric matrices were also obtained. 
It is well-known that the eigenvalue gap distribution can be computed from the 
correlation 
functions via the inclusion-exclusion principle
and thus \eqref{sine} also  yields a precise asymptotics for eigenvalue
 gap distributions. 
 In a striking coincidence, 
the Wigner surmise, which was based on a $2\times 2$ matrix computation,
 agrees with this sophisticated 
formula with a  typical error of only a few percentage points. 
 Note that the correlation functions do not factorize, i.e.
the eigenvalues are strongly correlated despite that the matrix elements
are independent. Eigenvalues of random matrices thus represent
a strongly correlated point process obtained from independent random
variables in a natural way.

The central thesis of Wigner  is the belief  that
the eigenvalue gap distributions   for  large complicated quantum systems are universal
 in the sense that they  depend only on the symmetry class
 of the physical system but not  on other  detailed structures. 
 This thesis has never been proved for  any truly interacting system and 
there is  even no  heuristically  convincing argument for its correctness.
Despite this, there is a general belief  that the random matrix statistics and
 Poisson statistics represent
two paradigms  of energy level statistics for many-body quantum systems:
Poisson for independent systems and random matrix for highly correlated systems. 
In fact, these paradigms extend even to certain one-body systems
such as the quantization of the geodesic flow in a domain or on a manifold  
\cite{BT, BGS} or random Schr\"odinger operators \cite{Spe}.

In retrospect, Wigner's idea should have received even more attention.
For centuries, the primary 
territory of probability theory was to model uncorrelated
or weakly correlated systems via the law of large numbers or the 
central limit theorem. Random matrix statistics  
is essentially the first and only  general  
computable pattern  for complicated correlated systems and
 it is conjectured to be ubiquitous.  
We only mention here the spectacular result of Montgomery \cite{Mont}
 which proves a special  case of the conjecture (under the assumption of 
the Riemann hypothesis) 
 that  the distribution 
of zeros of the Riemann zeta function
on the critical line is given by a random matrix statistics.

The simplest class to test  Wigner's universality  hypothesis upon is  the random 
matrix ensemble itself.
 All calculations by Dyson, Gaudin and Mehta  are for Gaussian ensembles, i.e.,
where the matrix elements $h_{ij}$ are  real or complex 
Gaussian random variables.
 These ensembles are called the Gaussian orthogonal ensemble (GOE)
 and Gaussian unitary ensemble (GUE). 
If Wigner's universality hypothesis is correct, then the local eigenvalue 
statistics should be independent of the law of the matrix elements.
This is generally referred to as the {\it universality conjecture 
 of  random matrices}  and we will call it the {\it Wigner-Dyson-Gaudin-Mehta conjecture} 
due to the vision of Wigner and the pioneering work of these authors.
 It was first formulated in Mehta's treatise on
random matrices \cite{M} in 1967 and has remained a  key question
in the subject ever since.
 Our goal in this paper is to review  the recent progress 
in this direction and sketch some of the important ideas.   

 Random matrices have been  intensively studied in the last 15-20 years
and we will not be able to present all aspects of this research. We refer
the reader to recent comprehensive books  \cite{De1, DG1, AGZ}.

\medskip

The laws of random matrices can be generally divided into {\it invariant} 
and {\it non-invariant ensembles}.
The invariant ensembles 
are characterized by a probability  measure of the form  
$Z^{-1} e^{- N \beta  { \rm Tr} V(H)/2 } \rd H$ 
where $N$ is the size of the matrix, 
 $V$ is a real valued potential and $Z$ is the normalization constant.  
The parameter 
$\beta>0$ is  determined by the symmetry class of the model and 
$\rd H$ is the Lebesgue measure on matrices in the class. 
These ensembles are called invariant since the probability law depends only on the trace
 of a  function of the matrix and 
thus is invariant under changes of coordinates. 
The matrix elements 
are in general correlated and they are  independent if only if the model is Gaussian,
 i.e., $V$ is quadratic.

For invariant ensembles, the probability distribution of the eigenvalues
$\lambda=(\lambda_1,\dots,\lambda_N)$ with $\lambda_1\leq \dots\leq \lambda_N$
for the measure  $e^{- N \beta  { \rm Tr} V(H)/2 }/Z$
is given by the explicit formula (c.f. \eqref{expli28})
\begin{equation}\label{01}
\mu^{(N)}_{\beta, V}(\lambda)\rd \lambda \sim e^{- \beta N \cH(\lambda)} \rd\lambda
 \quad \mbox{with Hamiltonian} \quad
\cH(\lambda) :=   \sum_{k=1}^N  \frac{1}{2}V(\lambda_k)- 
\frac{1}{N} \sum_{1\leq i<j\leq N}\log (\lambda_j-\lambda_i) ,
\end{equation}
where the parameter $\beta$ is determined by the symmetry class: 
 $\beta= 1$ for symmetric matrices,  $\beta= 2$ for Hermitian matrices and  $\beta = 4 $ for  
self dual quaternion matrices. The key structural ingredient
 of this formula, the Vandermonde determinant, 
is the same as in the Gaussian case, \eqref{expli28}. Thus all previous computations, 
developed for the Gaussian
case, can be carried out for $\beta=1, 2, 4$
provided that the Gaussian weight function  for the orthogonal polynomials 
is replaced with the function $e^{- \beta V(x) /2}$. 
Thus the analysis of the correlation functions depends 
critically  on the the asymptotic properties of the 
 corresponding orthogonal polynomials.
 In the pioneering  work 
of Dyson, Gaudin and Mehta , the potential 
is the quadratic polynomial $V(x) = x^2/2$ and the orthogonal polynomials are the Hermite polynomials
whose asymptotic properties are well-known.

The extension of  this approach to  a general potential is a 
demanding task;  important  progress was  made since  the late 1990's by 
Fokas-Its-Kitaev \cite{FIK},   Bleher-Its \cite{BI}, Deift {\it et. al.}
\cite{De1,  DKMVZ1, DKMVZ2},  Pastur-Shcherbina \cite{PS:97, PS} 
and more recently by
Lubinsky \cite{Lub}. { These results concern the simpler $\beta=2$ case.}
   For $\beta =1, 4$, the universality was established only quite recently 
for analytic $V$ with additional assumptions \cite{DG, DG1, KS, Sch} 
using earlier ideas of Widom  \cite{Wid}. 
 The final outcome  of these sophisticated  analyses is that  universality holds for the 
measure \eqref{01} in the sense 
that the short scale behavior of the correlation functions is independent of the potential
 $V$ (with appropriate assumptions) 
provided that  $\beta$ is one of the classical values, i.e., $\beta \in \{ 1,2,4\}$, that corresponds
to an underlying matrix ensemble.

 Notwithstanding  matrix ensembles or orthogonal polynomials,
the measure \eqref{01} is perfectly well defined for any $\beta>0$
and it can be interpreted as the Gibbs measure for a system of
particles with a logarithmic interaction (log-gas)
 at inverse temperature $\beta$. It is therefore a natural
question to extend universality  to  non-classical $\beta$
but the orthogonal polynomial methods are difficult to apply for this case.
  For all $\beta > 0$ the local statistics for the Gaussian case $V(x)=x^2/2$ 
can,  however, be characterized by the  ``Brownian carousel''  \cite{RRV, VV} which was
 derived from  a tridiagonal matrix representation
 \cite{DumEde} of Gaussian random matrices.

\bigskip

Apart from the invariant ensembles there are many natural non-invariant  
ensembles; the simplest  and most important  one being the {\it Wigner ensemble} for which 
the  matrix elements are  independent subject to a symmetry requirement,
e.g. $h_{ij}= \bar h_{ji}$
in the Hermitian case.
For non-invariant ensembles  there is no explicit formula analogous to \eqref{01} for
the joint distribution of the eigenvalues.  Hence the methods  for  the invariant
 ensembles described above are not applicable.
Until very recently, most rigorous results  
 have been on the density of eigenvalues, i.e. the convergence to the  the Wigner semicircle law \eqref{sc} 
was established with certain error estimates, see e.g. the works by Bai et al \cite{BMT}
and Guionnet and Zeitouni \cite{GZ}.
 The universality of the local statistics could only be
established for Hermitian Wigner matrices with a substantial Gaussian
component by Johansson \cite{J} and Ben Arous-P\'ech\'e \cite{BP}.
 All previous results on local universality
have relied on explicitly computable algebraic formulae. These were
provided by orthogonal polynomials
in case of the invariant ensembles, and by a modification of
 the Harish-Chandra/Itzykson/Zuber
integral in case of \cite{J}.  Nevertheless, following Wigner's thesis,
 universality is expected to hold for general Wigner matrices as well.

\medskip

Having summarized the existing rigorous results that were available 
until  2008, 
we set the two main problems we wish to address in this  article:

\medskip
\noindent
{\it Problem 1}: Prove the Wigner-Dyson-Gaudin-Mehta conjecture, i.e.
the universality for Wigner matrices
 with a general distribution for the matrix elements.

\medskip 
\noindent 
{\it Problem 2}:  Prove the universality of  the local statistics
for the log-gas \eqref{01} for all $\beta > 0$.

\medskip

We were able to  solve  Problem 1
 for a very general
class of distributions. As for  Problem 2, we solved it for the case of  real analytic   potentials $V$
 assuming that the equilibrium measure is supported on a single interval,
which, in particular, holds for any convex potential.
 We now state  our results precisely.

\begin{theorem}[Wigner-Dyson-Gaudin-Mehta conjecture]\cite[Theorem 7.2]{EKYY2} \label{bulkWigner}
Suppose that $H = (h_{ij})$ is a Hermitian (respectively, symmetric)  Wigner matrix.
 Suppose that for some $\e>0$  
\begin{equation} \label{4+e}
\E \left | \sqrt N  h_{ij}  \right | ^{4 + \e}  \;\leq\; C\,,
\end{equation}
for some constant $C$. 
Let $n \in \N$ and $O : \R^n \to \R$ be compactly supported and continuous.
Let $E$ satisfy $-2 < E < 2$ and let $\xi > 0$. 
Then for any sequence $b_N$ satisfying
 $N^{-1 + \xi} \leq b_N 
\leq \left | { |E|- 2}  \right |/2 $ we have
\begin{multline}\label{avg}
\lim_{N \to \infty} \int_{E - b_N}^{E + b_N} \frac{\rd E'}{2 b_N} \int_{\bR^n} \rd \alpha_1 \cdots
 \rd \alpha_n\, O(\alpha_1, 
\dots, \alpha_n) 
\\ 
{}\times{}  \frac{1}{\varrho_{sc}(E)^n} \left ( {p_{N}^{(n)} - p_{{\rm G}, N}^{(n)}} \right ) 
 \left ( {E' +
\frac{\alpha_1}{N\varrho_{sc}(E)}, \dots, E' + \frac{\alpha_n}{N\varrho_{sc}(E)}}\right )  \;=\; 0\,.
\end{multline}
Here $\varrho_{sc}$ is the semicircle law
 defined in \eqref{sc}, $p_N^{(n)}$ is the $n$-point correlation function
 of the eigenvalue 
distribution of $H$, and $p^{(n)}_{{\rm G},N}$ is the $n$-point correlation function
of an $N \times N$ GUE (respectively, GOE) matrix.
\end{theorem}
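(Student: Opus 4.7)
I would follow the three-step strategy that has emerged for Wigner universality: (i) establish a strong local semicircle law to obtain eigenvalue rigidity at the optimal scale, (ii) use Dyson Brownian motion (DBM) to prove universality for a small Gaussian perturbation of $H$, and (iii) remove the perturbation by a Green's function comparison. The averaged form \eqref{avg} (integration of $E'$ over a window of size $b_N \ge N^{-1+\xi}$) is key because it relaxes the comparison step in (iii) to one that does not require matching four moments of the entries.

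For step (i), I would analyze the Stieltjes transform $m_N(z) := \frac{1}{N}\sum_j (\lambda_j - z)^{-1}$ on the scale $\im z \gg N^{-1}$ via its self-consistent equation $m_N(z) \approx -1/(z + m_N(z))$, using large-deviation bounds for quadratic forms $\langle \bv, H \bv\rangle$ of the rows/columns of $H$ under the $4+\e$ moment assumption \eqref{4+e}. The outputs I need are: (a) rigidity of eigenvalues, $|\lambda_j - \ga_j| \le N^{-1+\xi}$ where $\ga_j$ is the classical location determined by $\varrho_{sc}$, and (b) isotropic entrywise control of the Green's function $G(z) = (H - z)^{-1}$ with $\im z \ge N^{-1-\delta}$. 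Both are extensively used as a priori bounds in the later steps.

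For step (ii), I would consider the Ornstein--Uhlenbeck interpolation $H_t := e^{-t/2} H + \sqrt{1 - e^{-t}}\, G$ with $G$ an independent GUE (respectively GOE) matrix. The eigenvalues of $H_t$ satisfy the DBM system, whose reversible measure is the log-gas $\mu^{(N)}_{\beta, V}$ with $V(x) = x^2/2$. Using the rigidity estimates from step (i) as a priori input, I would show via a logarithmic Sobolev / relative-entropy argument that the local eigenvalue statistics of $H_t$ relax to those of the Gaussian equilibrium after a time $t \gg N^{-1}$. The heuristic is that, on the space of ordered eigenvalues, the Hessian of the Hamiltonian $\cH$ in \eqref{01} is of order $N$ in the directions of eigenvalue gaps, so the local relaxation time of DBM is $O(N^{-1})$. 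Taking $t = N^{-1+\delta}$ for small $\delta > 0$ then yields that the averaged $n$-point correlation functions of $H_t$ coincide, in the $N\to\infty$ limit, with those of GUE/GOE.

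For step (iii), I would compare the averaged correlation functions of $H$ and $H_t$ via a Lindeberg-type swap on the matrix entries. The key observation is that $\int_{E-b_N}^{E+b_N} p_N^{(n)}(E' + \alpha_1/(N\varrho_{sc}), \dots)\, \rd E'$ can be expressed, up to small error from smoothing on scale $\eta \sim N^{-1-\delta'}$, as an expectation of a smooth functional of $\im G(E + i\eta)$ and similar resolvent traces. Replacing the entries of $H$ one at a time by those of $e^{-t/2} H + \sqrt{1-e^{-t}} G$ and Taylor-expanding, the first two moments match by construction, and the third- and fourth-order terms are controlled by the resolvent bounds from step (i) together with the $4+\e$ moment assumption. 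The main obstacle, and the reason the averaged form is used, is precisely this comparison under the minimal moment condition: without the $\rd E'$ integration one would need a matching of the fourth moment (as in the Tao--Vu four-moment theorem), whereas averaging over an energy window of width $b_N \ge N^{-1+\xi}$ smooths the resolvent functional enough that the higher-order Taylor remainders are of lower order than $b_N^{-1}$ uniformly in $N$.
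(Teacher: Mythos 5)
Your three-step skeleton (local law and rigidity; DBM relaxation for $H_t$; Lindeberg-type comparison of $H$ and $H_t$) is exactly the strategy of the paper, and Steps (i) and (ii) are essentially what is done in Sections~\ref{sec:refined} and \ref{sec:DBM}. The genuine problem is your Step (iii), where the stated mechanism is wrong. The Green function comparison theorem (Theorem~\ref{comparison}) requires the approximate matching of the first \emph{four} moments, condition \eqref{4}, whether or not one averages in $E'$: after the $O(N^2)$ entry swaps, the third-order Taylor term has total size of order $N^{1/2}\,|\E v_{ij}^3-\E w_{ij}^3|$ (times $N^{C\e}$ from the resolvent bounds), and averaging over an energy window of width $b_N\gg 1/N$ does nothing to shrink it. What actually closes the comparison between $H_0$ and $H_t$ in the paper is that along the OU flow the third and fourth moments move only by $O(t)$, see \eqref{smom}; with $t=N^{-1+3\e}$ this makes \eqref{4} hold automatically, and the comparison then works at a \emph{fixed} energy (Theorem~\ref{com}). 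The $\rd E'$ average in \eqref{avg} is needed for a different reason: the DBM step, Theorem~\ref{thm:DBM} via Theorem~\ref{thmM}, only identifies correlation functions after an average over an energy window (equivalently over the gap labels $J$). Your claim that ``without the $\rd E'$ integration one would need four-moment matching, whereas averaging smooths the resolvent functional enough'' inverts this logic; with your choice $t=N^{-1+\delta}$ the third and fourth moments essentially do match, and if they did not, the averaging would not rescue the swap.

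There is a second gap concerning the full range of $b_N$ claimed in the theorem. For $b_N$ as small as $N^{-1+\xi}$, the DBM result with such a narrow energy average is only available for relatively large times, $t\ge N^{-\xi/8}$ (Theorem 2.3 of \cite{EYYrigi}); for such $t$ the deviation \eqref{smom} of the third moments is far too large for \eqref{4}, so one cannot compare $H_0$ with $H_t$ directly. The paper resolves this by constructing an auxiliary Wigner matrix $\wh H_0$ whose OU evolution $\wh H_t$ matches $H_0$ exactly up to the third moment and approximately in the fourth even for $t\sim N^{-\xi/8}$ (Lemma 3.4 of \cite{EYY2}), applying Theorem~\ref{thm:DBM} to $\wh H_t$ and Theorem~\ref{comparison} to the pair $(\wh H_t, H_0)$. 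Your proposal, which keeps $t=N^{-1+\delta}$ throughout, would at best give \eqref{avg} for the range of $b_N$ that the DBM theorem tolerates at that time scale, not down to $N^{-1+\xi}$. A minor further point: the paper handles the $(4+\e)$-moment condition \eqref{4+e} by a truncation reducing to subexponentially decaying entries \eqref{subexp}, rather than by running the local law directly under \eqref{4+e}; if you want to work under \eqref{4+e} directly you must say how the large-deviation estimates behind Theorem~\ref{lsc} survive with only $4+\e$ moments.
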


We remark that the convergence in this theorem is in weak sense,
and it also involves averaging over a small energy interval  $E'\in [E-b_N, E+b_N]$. 
Stronger types of convergence may also be considered and we will comment
on one possible such extension in Section~\ref{sec:put}. 
We believe that the issue of convergence types is of a technical nature and it is  dwarfed by
 the challenge to prove universality for the largest possible family of matrix ensembles. 
{\it  The fundamental challenge in random matrix theory remains in  answering the question of 
 why random matrix law is ubiquitous for seemingly disparate ensembles and physical systems. }
We will present a few extensions in this direction  in Sections~\ref{sec:gen} and
\ref{sec:ER}.

\medskip

In the case of  invariant ensembles, it is well-known that for 
 $V$ satisfying certain mild conditions the sequence
of one-point correlation functions, or
 densities, associated with  $\mu^{(N)}$ has a limit
as $N\to\infty$
and the limiting  equilibrium density  $\varrho(s)$
can be obtained as the unique minimizer  of the
functional
$$
I(\nu)=
\int_\bR V(t) \nu(t)\rd t-
\int_\bR \int_\bR \log|t-s| \nu(s) \nu(t) \rd t \rd s.
$$
Moreover, for convex $V$ the support of $\varrho$ is a single interval $[A,B]$
and $\varrho$ satisfies the equation 
\be
   \frac{1}{2}V'(t) = \int_\bR \frac{\varrho(s)\rd s}{t-s}
\label{equilibrium}
\ee
for any $t\in(A,B)$.  For the Gaussian case, $V(x)=x^2/2$, the equilibrium density
is given by the semicircle law $\varrho=\varrho_{sc}$, see \eqref{sc}.

\begin{theorem}[Bulk universality of $\beta$-ensemble]\cite[Corollary  2.2]{BEY} \label{bulkbeta}
 Assume
$V$ is a real analytic  function with 
$\inf_{x\in\R}V''(x) > 0$. 
Let $\beta> 0$.
 Consider the $\beta$-ensemble $\mu=\mu_{\beta, V}^{(N)}$ given in \eqref{01} 
and let
 $p_N^{(n)}$ denote the $n$-point correlation functions
of $\mu$, defined analogously to \eqref{pk}.
 For the Gaussian case, $V(x) =x^2/2$, the correlation
functions are denoted by $p_{G,N}^{(n)}$.
Let $E\in (A,B)$ lie in the interior of the support of $\varrho$
 and
similarly let $E'\in (-2,2)$ be inside the support of $\varrho_{sc}$. 
Let $O:\R^n\to \R$ be a smooth, compactly supported function.
Then for $b_N = N^{-1+\xi}$ with any $0<\xi \le 1/2$ we have
\begin{align}
\lim_{N \to \infty} \int  & \rd \alpha_1 \cdots \rd \alpha_n\, O(\alpha_1, 
\dots, \alpha_n) \Bigg [ 
   \int_{E - b_N}^{E + b_N} \frac{\rd x}{2 b_N}  \frac{1}{ \varrho (E)^n  }  p_{N}^{(n)}   \Big  ( x +
\frac{\alpha_1}{N\varrho(E)}, \dots,   x + \frac{\alpha_n}{N\varrho(E)}  \Big  ) \\ \nonumber
&
 -   \int_{E' - b_N}^{E' + b_N} \frac{\rd x}{2b_N}  \frac{1}{\varrho_{sc}(E')^n} p_{{\rm G}, N}^{(n)}      \Big  ( x +
\frac{\alpha_1}{N\varrho_{sc}(E')}, \dots,   x + \frac{\alpha_n}{N\varrho_{sc}(E')}  \Big  ) \Bigg ]
 \;=\; 0\,,
\end{align}
 i.e. the appropriately normalized  correlation functions  of  the
measure $\mu_{\beta, V}^{(N)}$ at the level $E$ in the bulk of the limiting density 
 asymptotically coincide with those
of the Gaussian case and they are independent of the value of $E$ in the bulk. 
\end{theorem}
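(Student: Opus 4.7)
The plan is to compare $\mu_{\beta, V}^{(N)}$ to $\mu_{\beta, G}^{(N)}$ (with $G(x) = x^2/2$) via a local Dyson Brownian motion (DBM) analysis, exploiting that the local statistics of the Gaussian $\beta$-ensemble are already known for every $\beta > 0$ through the Brownian carousel. After matching densities by choosing $E'\in(-2,2)$ with $\varrho_{sc}(E')=\varrho(E)$, it suffices to show that the rescaled correlation functions of the two $\beta$-ensembles coincide asymptotically on the microscopic scale $1/N$. The first input needed is optimal rigidity: for any $\xi>0$, with overwhelming probability
\begin{equation*}
|\lambda_k - \gamma_k^V|\leq N^{-1+\xi}, \qquad \gamma_k^V := \varrho\text{-quantile of }k/N,
\end{equation*}
uniformly in the bulk, and similarly for $\mu_{\beta,G}^{(N)}$ with $\gamma_k^V$ replaced by the semicircle quantile. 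Under analyticity and $\inf V''>0$, this rigidity can be derived from a loop-equation/multi-scale concentration analysis exploiting the uniform convexity of the Hamiltonian $\cH$, using either Brascamp--Lieb or Helffer--Sj\"ostrand inequalities to propagate initial weak a priori bounds down to the optimal scale.

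Next, fix a mesoscopic window $I = [E-\eta, E+\eta]$ with $\eta = N^{-1+\alpha}$ ($\alpha>0$ small) containing $K\sim 2N\eta\varrho(E)$ particles. I would then condition on the exterior configuration $\mathbf{y} = \{\lambda_k : \lambda_k\notin I\}$; the interior distribution becomes a $\beta$-log-gas with modified external potential
\begin{equation*}
V^{\mathbf{y}}(x) = V(x) - \frac{2}{\beta N}\sum_{\lambda_k \notin I}\log|x-\lambda_k|,
\end{equation*}
which, on the rigidity event, is uniformly convex with $(V^{\mathbf{y}})''$ bounded below by a positive constant on $I$. Construct the analogous Gaussian local measure around $E'$ and affine-rescale; both reduce to $\beta$-log-gases on comparable intervals whose external potentials agree to leading order (the discrepancy being controlled by rigidity). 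Now run the associated DBM flow for a short time $t_*\ll 1$; the uniform convexity yields a local logarithmic Sobolev inequality, and an entropy--Dirichlet-form argument produces decay of the relative entropy between the flowed measure and its equilibrium on a time scale that still exceeds the microscopic gap $1/N$. This forces agreement of the rescaled $n$-point correlation functions on all scales above $1/N$. Averaging first over the typical $\mathbf{y}$ and then over $E'$ in a window of size $b_N = N^{-1+\xi}$ yields the claimed vanishing limit, since the Gaussian correlation functions are themselves asymptotically independent of $E'$ in the bulk.

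The main obstacle is precisely the local relaxation step: global DBM equilibrates in time $O(1)$, but one needs local equilibration at the microscopic scale $1/N$ to happen in time $t_*\ll 1$. This is only possible because conditioning on exterior particles converts the a priori globally convex Hamiltonian into a strongly convex local Hamiltonian with Hessian of order one, so the log-Sobolev constant of the conditional measure can be shown to be $O(1)$ uniformly in $N$. The delicate points are (i) keeping the log-Sobolev constant stable under the randomness of $\mathbf{y}$, and (ii) controlling the error arising from the mismatch between the two local conditional measures induced by $V$ and by $G$, which in turn relies on the rigidity estimates from Step 1 holding simultaneously at both reference points. The analyticity and strict convexity of $V$ are used precisely to ensure that the equilibrium density $\varrho$ is real analytic and strictly positive in the bulk, and that no pathological edge behavior leaks into the window $I$, which is the structural prerequisite for the rigidity and hence the entire argument.
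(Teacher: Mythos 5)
Your overall architecture (rigidity from the loop equation plus a comparison of conditioned local log-gases around $E$ and $E'$, with the Gaussian $\beta$-ensemble as reference) is the same two-step scheme as the paper's, but Step 2 as you describe it would not close, and it fails exactly at the point where the paper needs an extra idea. First, your convexity count is off: conditioning on the exterior configuration $\by$ does not give a conditional Hamiltonian with ``Hessian of order one'' as the relevant gain --- the global Hamiltonian already has Hessian of order one from $\inf V''>0$, which only yields relaxation on time scale $O(1)$ and buys nothing locally; what the exterior logarithmic terms actually provide is a lower bound of order $N/K$ in the interior of a window containing $K$ points, i.e.\ relaxation time $\tau_\sigma\sim K/N$ for the conditioned measure. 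Second, and more seriously, even with this improved convexity and with the optimal rigidity $|\lambda_k-\gamma_k|\lesssim N^{-1+\e'}$ as input, the entropy--Dirichlet-form comparison between $\mu_\by$ and the reference local Gaussian measure $\sigma_\bt$ does not hold for any admissible window size: the error term of the form $\big(\tau_\sigma N^{\e}\, D(\mu_\by\mid\sigma_\bt)/K\big)^{1/2}$ cannot be made to vanish for any $K$ between $N^{c\e'}$ and $N^{1-c\e'}$; the paper states this explicitly at the end of Section \ref{sec:loceq}. The resolution there is to modify \emph{both} local measures by an auxiliary quadratic confinement $\frac{1}{2\tau}(x_j-\gamma_j)^2$ (the local relaxation measure of \cite{ESYY}), which enhances convexity beyond what conditioning alone provides without altering the local statistics. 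This ingredient is absent from your argument, so your assertion that the local LSI ``forces agreement of the rescaled $n$-point correlation functions'' is unsupported at the decisive step.

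A second missing mechanism is the treatment of the boundary terms. The Dirichlet form $D(\mu_\by\mid\sigma_\bt)$ is controlled by the force mismatch $Z_j$ of \eqref{Zdef}; rigidity handles the exterior particles $y_k$ with $|k-j|\ge N^\e$, but the terms with $y_k$ adjacent to the window are singular (many $y_k$ may pile up near $y_L$) and cannot be bounded by $|y_k-\gamma_k|\le\delta$ alone. The paper deals with them by replacing the near-edge boundary conditions by their classical locations and paying an entropy price as in \eqref{Diff1}, affordable only because just $O(N^\e)$ indices are swapped. Your sketch lists the $\by$-dependence of the LSI constant and the mismatch of the two conditional measures as ``delicate points'' but gives no mechanism for either; these are not technicalities but the substance of the proof of Theorem \ref{thm:mi}. (Your Step 1 --- rigidity via the loop equation and a multiscale concentration argument exploiting convexity --- is essentially the paper's Theorem \ref{thm:accuracy} and is fine as a sketch; also note that no density matching $\varrho_{sc}(E')=\varrho(E)$ is needed, since the rescalings by $N\varrho(E)$ and $N\varrho_{sc}(E')$ already normalize the two densities.)
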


We close this introduction with some short remarks concerning these
two theorems. 
Theorem \ref{bulkWigner} holds for a much larger class of 
 matrix ensembles with independent
entries and we will review 
some of them in Sections~\ref{sec:gen} and \ref{sec:ER}. 
Although Theorem \ref{bulkWigner} in its current form was proved in \cite{EKYY2},
the key ideas have been developed through several important 
steps in \cite{EPRSY, ESY4, EYY, EYY2, EYYrigi}. 
In particular, the Wigner-Dyson-Gaudin-Mehta (WDGM)  conjecture for Hermitian matrices was
 first solved  in \cite{EPRSY} in a joint work with the current authors and P\'ech\'e,  Ram\'irez and   Schlein.
This result holds  whenever the distributions of matrix elements are smooth.   The smoothness requirement was  
 partially removed in \cite{TV} and completely removed in a joint paper  with Ram\'irez,  Schlein, Tao and Vu
\cite{ERSTVY}. The  WDGM conjecture
for symmetric matrices was resolved  in \cite{ESY4}. In this paper,  
a novel idea based on Dyson Brownian motion was discovered. The most difficult case, 
the real symmetric Bernoulli matrices, was solved in \cite{EYY2} where a ``Fluctation Averaging 
 Lemma" (Lemma \ref{cancel} of the current paper)  
exploiting cancellation of matrix elements of the Green function was first introduced.
We will give a more detailed historical review  in Section~\ref{sec:hist}.

\bigskip
\noindent
The proof  of Theorem \ref{bulkWigner} 
   consists of  the following three steps,
discussed in Sections~\ref{sec:refined}, \ref{sec:DBM} and \ref{sec:4mom}, respectively.
Our  three-step  strategy was first introduced in \cite{EPRSY}.

\medskip 
\noindent
{\it Step 1.}  {\it Local semicircle law and delocalization of eigenvectors:}
It states that the density of eigenvalues
is  given by
the semicircle law not only as a weak limit on macroscopic scales \eqref{sc}, but
also in a strong sense and down to short scales containing 
only $N^\e$ eigenvalues for all $\e> 0$.  
This will imply the  {\it rigidity of eigenvalues}, 
 i.e., that   the eigenvalues are near their classical location
  in the sense to be made 
clear in Section~\ref{sec:DBM}. 
We also obtain precise estimates on the matrix elements of the Green function
 which in particular imply complete delocalization of eigenvectors.

\medskip 
\noindent
{\it Step 2.}
{\it universality for
Gaussian divisible ensembles:}  The Gaussian divisible ensembles are matrices of the form 
$ H_t= e^{-t/2} H_0+ \sqrt {1 - e^{-t}} U$,
 where $H_0$ is a Wigner matrix and
$U$ is an independent GUE matrix. The parametrization of  $H_t$ reflects that it is most conveniently obtained
by an Ornstein-Uhlenbeck  process.  There are two methods and both methods imply 
 the bulk  universality of $H_t$ for $t = N^{-\tau}$ for the entire  range of $0< \tau<1$ with  different estimates.

\begin{description}
\item[2a] {\it Proposition 3.1 of \cite{EPRSY} which uses an extension of Johansson's formula \cite{J}.}
\item[2b] {\it  Local ergodicity of the Dyson Brownian motion (DBM):}
\end{description}

The approach in 2a yields a slightly stronger estimate than 
the approach in 2b,  but it works only in the  Hermitian case.  
In this review, we will focus on the Dyson Brownian approach.

\medskip 
\noindent
{\it Step 3.}  {\it Approximation by Gaussian divisible ensembles:}
 It is a simple 
 density argument  in the space
of matrix ensembles which 
shows that for any probability distribution of the matrix
 elements there exists a Gaussian divisible distribution
with a small Gaussian component, as in Step 2, such 
that the two  associated Wigner ensembles
have asymptotically identical local eigenvalue statistics. 
 The first implementation  of this approximation scheme was via  a reverse heat flow 
argument  \cite{EPRSY}; it was later replaced by  the
  {\it Green function comparison theorem} \cite{EYY}.

\medskip

\noindent
The proof of Theorem \ref{bulkbeta}  consists of  the following two steps
that will be presented in Sections~\ref{beta} and \ref{sec:loceq}.

\medskip
\noindent
{\it Step 1.  Rigidity of eigenvalues.}  
This establishes that the location of the eigenvalues  are not too far
from their classical locations  determined by the equilibrium density $\varrho(s)$.

\medskip
\noindent
{\it Step 2.  Uniqueness of local Gibbs measures with logarithmic interactions.}
With the precision of eigenvalue location estimates from the Step 1 as an input,
the eigenvalue spacing distributions are
 shown to be  given by the corresponding 
Gaussian ones. (We will take the uniqueness of the spacing
distributions as our definition of the uniqueness of Gibbs state.)

\medskip 

There are several similarities and differences  between these two  methods. 
Both start with rigidity estimates on eigenvalues and then
 establish that the local spacing distributions are the same
 as in the Gaussian cases. 
The Gaussian divisible ensembles, which play a  key role in our theory for noninvariant ensembles, 
are  completely absent for invariant ensembles.
The key connection between the two methods, however,  
is the usage of DBM (or its analogue) in the Steps 2. In Section~\ref{sec:DBM}, we will 
first  present this idea.

The method for the proof of Theorem~\ref{bulkWigner} is extremely general. 
As of this writing, it has been applied to the generalized Wigner ensembles, the sample covariance
 ensembles and the Erd{\H o}s-R\'enyi  
matrices for certain range of the sparseness parameter.
It can also be extended to the  edges of the spectrum,
and it yields edge universality 
under more general conditions than  were previously known. 
This will be reviewed in Section~\ref{sec:edge}.
 Extensions to generalized Wigner matrices and Erd{\H o}s-R\'enyi  
matrices will also be discussed in Sections~\ref{sec:gen} and \ref{sec:ER}.
As the proof of Theorem \ref{bulkbeta} was just completed,  
we do not know how far this method can reach;  currently 
we can generalize the result to the nonconvex case 
under the assumption that the equilibrium measure $\rho$ is supported on a single interval
\cite{BEY2}.    
The theory we have developed to prove  Theorems~\ref{bulkWigner} and \ref{bulkbeta}
is purely analytic and we believe that it 
unveils the genuine mechanism of the Wigner-Dyson-Gaudin-Mehta universality.
 Finally, a short summary concerning   the recent history  
of universality is given in Section~\ref{sec:hist}.

\medskip

{\it Acknowledgement.} The results in this review were  obtained in collaboration
with Benjamin Schlein, Jun Yin, Antti Knowles and Paul Bourgade and in some work, also with
Jose Ramirez and Sandrine P\'eche.
 This article  is to report the joint progress 
with these authors.

\section{Dyson Brownian motion and the
local  relaxation flow}\label{sec:DBM}

\subsection{Concept and results}

The Dyson Brownian motion (DBM) describes the evolution of the eigenvalues
 of a Wigner matrix as an interacting point process
 if each matrix element $h_{ij}$ evolves according to independent
 (up to symmetry restriction) 
Brownian motions. We will slightly alter this definition by
 generating the dynamics of the matrix elements
by an Ornstein-Uhlenbeck (OU) process  which leaves the standard Gaussian
 distribution invariant. 
 In the Hermitian case, the OU process for the rescaled matrix elements 
 $v_{ij}: = N^{1/2}h_{ij}$ is given by the 
stochastic differential equation 
\be
  \rd v_{ij}= \rd \beta_{ij} - \frac{1}{2} v_{ij}\rd t, \qquad
i,j=1,2,\ldots N,
\label{zij}
\ee
where $ \beta_{ij}$,  $i <  j$, are independent complex Brownian
motions with variance one and $ \beta_{ii}$ are real 
Brownian motions of the same variance. 
 Denote the distribution of 
the eigenvalues $\lambda=(\lambda_1, \lambda_2,\ldots, \lambda_N)$
 of $H_t$ at  time $t$
by $f_t ({\mathbf \lambda})\mu_G (\rd {\bf \lambda})$
where $\mu_G$ is given by \eqref{01} with the potential $V(x) = x^2/2$.

Then $f_t=f_{t,N}$ satisfies \cite{DyB}
\be\label{dy}
\partial_{t} f_t =  \cL f_t,
\ee
where 
\be
\cL=\cL_N:=   \sum_{i=1}^N \frac{1}{2N}\partial_{i}^{2}  +\sum_{i=1}^N
\Bigg(- \frac{\beta}{4} \lambda_{i} +  \frac{\beta}{2N}\sum_{j\ne i}
\frac{1}{\lambda_i - \lambda_j}\Bigg) \partial_{i}, \quad 
\partial_i=\frac{\partial}{\partial\lambda_i}.
\label{L}
\ee
The parameter $\beta$  is chosen as follows: $\beta= 2$ for complex 
Hermitian matrices  and $\beta=1$ for symmetric real matrices.
Our formulation of the problem has already taken into account  
Dyson's observation  that the invariant measure for this dynamics is $\mu_G$. 
 A natural question regarding the DBM is how fast the dynamics reaches equilibrium.
  Dyson had already posed this question in 1962:

\medskip

\noindent 
{\bf Dyson's conjecture \cite{DyB}:}  The global equilibrium of DBM is
 reached in  time of order one 
and  the  local equilibrium (in the bulk)  is reached in  time of order  $1/N$. 
  Dyson further remarked,

\medskip
\begin{minipage}[c]{6in}
{\it ``The picture of the gas coming into
equilibrium in two well-separated stages, with microscopic
and macroscopic time scales, is  suggested
 with the help of physical intuition. A
rigorous proof that this picture is accurate would
require a much deeper mathematical analysis."}
\end{minipage}

\bigskip

We will prove that Dyson's conjecture is correct if the 
initial data of the flow is a Wigner ensemble, which was 
Dyson's  original interest. Our result in fact is
 valid for DBM  with much more general initial data that we now survey. 
Briefly, 
it will turn out that the {\it global} equilibrium is
indeed reached within a time of order one,
but {\it local} equilibrium is achieved much faster if  an a-priori
estimate  on the location of the eigenvalues (also called points) is satisfied. 
To formulate this estimate,  
let $\gamma_j =\gamma_{j,N}$ denote  the location of the $j$-th point
under the semicircle law, i.e., $\gamma_j$ is defined by
\be\label{def:gamma}
 N \int_{-\infty}^{\gamma_j} \varrho_{sc}(x) \rd x = j, \qquad 1\leq j\le N. 
\ee
We will call $\gamma_j$ the {\it classical location} of the $j$-th point.

\bigskip
\noindent 
{\bf A-priori Estimate:} There exists an ${\frak a}>0$ such that
\be
 Q=Q_\fa:= \sup_{t\ge N^{- 2 { \frak a}}}   \frac{1}{N}
 \int \sum_{j=1}^N(\lambda_j-\gamma_j)^2
 f_t( \lambda )\mu_G(\rd \lambda) \le CN^{-1-2{\frak a}}
\label{assum3}
\ee
with a constant $C$ uniformly in $N$.
(This a-priori estimate was referred to as Assumption III
 in \cite{ESY4, ESYY}.)

The main result on the local ergodicity of Dyson Brownian motion states
 that if the a-priori estimate \eqref{assum3} is satisfied  
then the local correlation functions  of the measure  $f_t \mu_G$ 
are the same as the corresponding ones for the Gaussian measure, 
 $\mu_G = f_\infty\mu_G$,
 provided that
 $t$ is larger than $N^{-2{\frak a}}$.
The $n$-point
correlation functions of the probability measure $f_t\rd\mu_G$ are defined,
similarly to \eqref{pk}, by
\be
 p^{(n)}_{t,N}(x_1, x_2, \ldots,  x_n) = \int_{\R^{N-n}}
f_t(\bx) \mu_G(\bx) \rd x_{n+1}\ldots
\rd x_N, \qquad  \bx=(x_1, x_2,\ldots, x_N).
\label{corr}
\ee
Due to the convention that one can view the locations of eigenvalues as the coordinates of particles,  we have used $\bx$, 
instead of $\bl$,  in the last equation. From now on, we will use both conventions depending 
on which viewpoint we wish to emphasize. 
Notice that the probability distribution  of the eigenvalues at the 
time $t$, $f_t \mu_G$, 
 is the same as that of the Gaussian divisible  matrix: 
\be\label{matrixdbm}
H_t = e^{-t/2} H_0 + (1-e^{-t})^{1/2}\, U,
\ee
where $H_0$  is the initial  Wigner matrix and 
$U$ is an independent standard GUE (or GOE) matrix. This establishes  the universality of 
the Gaussian divisible ensembles. The precise statement is the following
theorem:

\begin{theorem}\label{thm:DBM} \cite[Theorem 2.1]{ESYY} Suppose that
 the a-priori estimate \eqref{assum3}  holds
 for the solution $f_t$ of the forward equation \eqref{dy} with some exponent $\fa>0$.
 Let $E\in (-2, 2) $ and $b>0$
such that $[E-b,E+b] \subset (-2, 2)$. Then for any $s>0$, 
for any integer $n\ge 1$ and for any compactly supported continuous test function
$O:\bR^n\to \bR$, we have
\be
\begin{split}
\lim_{N\to \infty} \sup_{t\ge  N^{-2\fa+s} } \;
\int_{E-b}^{E+b}\frac{\rd E'}{2b}
\int_{\R^n} &  \rd\alpha_1
\ldots \rd\alpha_n \; O(\alpha_1,\ldots,\alpha_n) \\
&\times \frac{1}{\varrho_{sc}(E)^n} \Big ( p_{t,N}^{(n)}  - p_{G, N} ^{(n)} \Big )
\Big (E'+\frac{\alpha_1}{N\varrho_{sc}(E)},
\ldots, E'+\frac{\alpha_n}{ N\varrho_{sc}(E)}\Big) =0.
\label{abstrthm}
\end{split}
\ee
\end{theorem}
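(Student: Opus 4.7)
The plan is to follow a local relaxation flow strategy: introduce an auxiliary reference measure which is close to $\mu_G$ as far as bulk local statistics are concerned but whose associated reversible dynamics has a dramatically shorter relaxation time. Comparing $f_t\mu_G$ to this reference via relative entropy and Dirichlet form estimates then yields the theorem on the required time scale $t\gg N^{-2\fa}$.

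\textbf{Step 1 (Local relaxation measure).} For a parameter $\tau$ of order $N^{-2\fa+s/2}$, define the pseudo-equilibrium density
\[
\omega_\tau(\lambda):=\exp\!\left(-\frac{\beta N}{2\tau}\sum_{j=1}^N(\lambda_j-\gamma_j)^2\right),
\qquad \rd\nu_\tau:=Z_\tau^{-1}\omega_\tau\,\rd\mu_G,
\]
so that $\nu_\tau$ superposes on the log-gas a one-body quadratic well pinning each $\lambda_j$ to its classical location. The reversible generator of the associated flow is $\wt\cL:=\cL-\tau^{-1}\sum_j(\lambda_j-\gamma_j)\pt_j$. Because the logarithmic interaction is convex and the pinning adds a uniform $\tau^{-1}$ to the Hessian of $-\log\nu_\tau$, a Bakry--Emery type argument yields a logarithmic Sobolev inequality (LSI) for $\nu_\tau$ with constant $C\tau$.

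\textbf{Step 2 (Entropy and Dirichlet form decay).} Set $S_t:=S(f_t\mu_G\,|\,\nu_\tau)$ and $D_t:=D(f_t\mu_G\,|\,\nu_\tau)$. Differentiating along the DBM $\pt_t f_t=\cL f_t$ and writing $\cL=\wt\cL+(\cL-\wt\cL)$, one obtains
\[
\pt_t S_t\le -\frac{D_t}{C\tau}+\frac{1}{\tau^2}\int\sum_{j=1}^N(\lambda_j-\gamma_j)^2\,f_t\,\rd\mu_G.
\]
The crucial observation is that the drift mismatch term is precisely the quantity estimated by \eqref{assum3}: it is bounded by $CN^{-2\fa}/\tau^2$. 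Integrating from $0$ to $t$ and applying the LSI bound $S_t\le C\tau D_t$ gives $S_t\le C\tau^{-1}\cdot t\cdot N^{-2\fa}\cdot\tau^2 + e^{-ct/\tau}S_0$, which, with our choice $\tau\ll t$ and the a-priori estimate, yields $S_t=o(N)$ and, more importantly, a time-averaged Dirichlet form bound $\int_0^t D_u\,\rd u$ small enough to run the next step.

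\textbf{Step 3 (Dirichlet form to correlation functions).} For any smooth compactly supported $O$ and any fixed $n$, a standard inequality of Dirichlet-form type (cf. the ``energy method'' for empirical averages of $n$-point observables) converts the bound on $\int_0^t D_u$ into
\[
\left|\int O_{E,b}(\lambda)\,\big(f_t\,\rd\mu_G-\rd\nu_\tau\big)\right|\longrightarrow 0,
\]
where $O_{E,b}$ is the rescaled and energy-averaged observable appearing in \eqref{abstrthm}. This reduces the theorem to proving that the bulk local correlation functions of $\nu_\tau$ agree asymptotically with those of $\mu_G$.

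\textbf{Step 4 (Local statistics of $\nu_\tau$ equal those of $\mu_G$).} Since the added potential is smooth and one-body, the equilibrium density of $\nu_\tau$ still satisfies the semicircle law up to a negligible $\tau$-dependent correction, and rigidity of the positions $\lambda_j$ near $\gamma_j$ holds uniformly in $\tau$. One then runs a short Gaussian DBM of length comparable to $\tau$ starting from $\nu_\tau$: on the one hand, Step~3 applied symmetrically shows the run does not change local $n$-point functions; on the other hand, such a short run drives $\nu_\tau$ to a measure whose local correlation functions differ from those of $\mu_G$ only by the contribution of the vanishing quadratic pin at scale $1/(N\varrho_{sc})$. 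Combining Steps~3 and~4 produces \eqref{abstrthm}.

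\textbf{Main obstacle.} The heart of the argument is Step~2: the drift mismatch $\cL-\wt\cL$ is singular of order $\tau^{-1}$, so to close the entropy inequality one needs exactly the $N^{-1-2\fa}$ gain built into the a-priori estimate \eqref{assum3}, matched against the LSI constant $C\tau$ of the pinned measure. Optimizing $\tau$ against $t$ forces $t\ge N^{-2\fa+s}$, explaining why this is the intrinsic relaxation time. The subsequent conversion of the Dirichlet form bound into a statement on correlation functions (Step~3) is delicate because $n$-point observables at scale $1/N$ are not bounded in the natural Sobolev norm controlled by $D_t$; this is overcome by averaging over the energy window $[E-b,E+b]$, which is exactly why the theorem is stated in the averaged form \eqref{abstrthm}.
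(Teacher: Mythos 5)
Your overall strategy is the paper's own: your pinned measure $\nu_\tau$ is exactly the local relaxation measure $\om$ of \eqref{defW}--\eqref{def:wth}, your Step 1 is Theorem~\ref{thm2}, and your Step 2 is Theorem~\ref{thm1} (up to a normalization slip: the drift term in the entropy production is $CN\sum_j\int b_j^2\, g_t\,\rd\om\le CN^2Q\tau^{-2}$, not $\tau^{-2}\int\sum_j(\la_j-\gamma_j)^2 f_t\,\rd\mu_G$; this costs a factor $N$ but does not change the scheme). The genuine gap is your Step 3, which you describe as ``a standard inequality of Dirichlet-form type.'' It is not standard; it is the central new ingredient of the proof (Theorem~\ref{thm3}). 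The naive conversion of the Dirichlet form bound into a total-variation bound via the LSI, i.e.\ \eqref{entbound}, gives $\sqrt{\tau D_\om(\sqrt q)}\lesssim \sqrt{N^{1-2\fa}/\tau}$, which diverges for $\tau\sim t\sim N^{-2\fa}$, and averaging over the energy window does not by itself repair this: your attribution of the gain to the $E'$-average is a misdiagnosis. The actual gain of $1/|J|\sim 1/N$ (``Dirichlet form per particle'') comes from the structure of the observables: since they depend only on the differences $x_i-x_{i+1}$, one evolves $q$ along the $\wt\cL$-flow, writes the difference of expectations as a time integral of $G'\big(N(x_i-x_{i+1})\big)\big(\pt_i q_s-\pt_{i+1}q_s\big)$, and applies Cauchy--Schwarz against the singular second-difference term $\frac{1}{N^2}\sum_{i,j}(x_i-x_j)^{-2}\big(\pt_i\sqrt{q_s}-\pt_j\sqrt{q_s}\big)^2$, whose time integral is controlled by the initial Dirichlet form thanks to the convexity of the logarithmic interaction, \eqref{0.1}--\eqref{0.2}. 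Without isolating this mechanism your Step 3 does not close, and this is precisely the point where the proof lives.

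Your Step 4 also has a gap. To identify the local statistics of $\nu_\tau$ with those of $\mu_G$ you propose to run a DBM of length $\sim\tau$ started from $\nu_\tau$ and assert that the result is locally $\mu_G$ up to the vanishing pin; but a run of length $\tau\ll 1$ does not bring the measure near $\mu_G$, and the claim that its local correlation functions agree with those of $\mu_G$ is the very type of statement being proved, so as written the step is circular. The paper's resolution is simpler and uses only the tools you already set up: apply Steps 2--3 a second time with initial data $f_0=1$, i.e.\ with the Gaussian equilibrium itself, for which the a-priori estimate \eqref{assum3} holds by GUE/GOE rigidity; this compares $\mu_G$ with $\om$ directly, and the triangle inequality then compares $f_t\mu_G$ with $\mu_G$. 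Finally, the theorem concerns correlation functions at rescaled energies, so one still has to pass from the averaged gap statistics \eqref{GG} (with $J$ the indices of eigenvalues in $[E-b,E+b]$, hence $|J|\sim Nb$) to \eqref{abstrthm}; this is where the energy average and the a-priori localization of eigenvalues genuinely enter, not as the source of the $1/N$ gain but as the bookkeeping that converts gap observables into the correlation-function statement.
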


We can choose  $b=b_N$ depending on $N$.  In \cite{ESYY} explicit bounds on the
  speed of convergence and the optimal range of $b$  were also established.  
In particular, thanks to the 
optimal rigidity estimate  \cite{EYYrigi}, i.e., \eqref{assum3} with $\fa =1/2$, the
range of the energy averaging  in \eqref{abstrthm}  was reduced
to $b_N\ge N^{-1+\xi}$, $\xi>0$, 
 but only for $t\ge N^{-\xi/8}$ (Theorem 2.3  of \cite{EYYrigi}).

Theorem \ref{thm:DBM} is a consequence of the following theorem which identifies the gap distribution
of the eigenvalues. 

\begin{theorem}[Universality of the Dyson Brownian motion for short time]\label{thmM}
\cite[Theorem 4.1]{ESYY} \\
Suppose  $\beta\ge1$ and 
let  $G:\bR \to\bR$ be a  smooth function with compact 
support. 
Then for 
any sufficiently small $\e>0$, independent of $N$, there exist
constants $C, c>0$, depending only on $\e$ and $G$ such that
 for any $J\subset \{ 1, 2, \ldots , N-1\}$  
  we have
\be\label{GG}
\Big| \int \frac 1 {|J|} \sum_{i\in J} G( N(x_i-x_{i+1})) f_t  \rd \mu_G -
\int \frac 1 {|J|} \sum_{i\in J} G( N(x_i-x_{i+1})) \rd\mu_G \Big|
\le C N^{\e} \sqrt{  \frac {N^2 Q} { |J| t }}  + Ce^{-c N^\e } .
\ee
In particular,  if the a-priori estimate \eqref{assum3} holds with some
$\fa>0$ and $|J|$ is of order $N$, then for any  $t > N^{-2 \fa + 3\e}$
the right hand side converges to zero as $N\to\infty$, i.e. the 
gap distributions for $f_t\rd\mu_G$ and $\rd\mu_G$ coincide.
\end{theorem}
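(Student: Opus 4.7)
My plan is to establish Theorem \ref{thmM} by what I would call a \emph{local relaxation flow} argument: compare the evolution $f_t \rd \mu_G$ against an auxiliary \emph{pinned} reference measure whose log-gas dynamics relaxes on a much shorter time scale than the original Dyson Brownian motion, and then extract the gap statistics from an entropy/logarithmic-Sobolev bound. To build the reference, fix a parameter $R>0$ (to be optimized as a power of $t$), define
\begin{equation*}
W(\bl) \;=\; \frac{1}{2R}\sum_{j=1}^{N}(\lambda_j-\gamma_j)^2,\qquad \psi(\bl) \;=\; Z^{-1}e^{-\beta N W(\bl)},
\end{equation*}
and set $\omega := \psi \mu_G$. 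Because $\omega$ inherits the log-gas structure of $\mu_G$ but with the additional strictly convex quadratic $W$ of Hessian $R^{-1}I$, the Bakry--\'Emery criterion applied to the generator
$\widetilde{\cL} := \cL - \sum_{j}(\partial_j W)\partial_j$
(which is reversible for $\omega$) produces a logarithmic Sobolev inequality with constant of order $R$, so the modified dynamics relaxes on time scale $R$ rather than the global scale $O(1)$ of $\cL$.

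The next step is an entropy dissipation estimate for the \emph{unperturbed} flow. Writing $g_t := f_t/\psi$ so that $f_t \rd\mu_G = g_t \rd\omega$, an integration by parts using $\partial_t f_t = \cL f_t$ yields
\begin{equation*}
\partial_t S_\omega(g_t) \;=\; -D_\omega\!\bke{\sqrt{g_t}} \;+\; \int \sum_j (\partial_j W)(\bl)\,(\partial_j g_t)\,\rd\omega,
\end{equation*}
where $S_\omega$, $D_\omega$ are the relative entropy and Dirichlet form attached to $\omega$. The forcing term I would control by Cauchy--Schwarz followed by AM--GM, absorbing half of $D_\omega(\sqrt{g_t})$ and leaving the error
$R^{-2}\int \sum_j(\lambda_j-\gamma_j)^2 f_t\rd\mu_G \;\lesssim\; N Q/R^2$
by the a-priori rigidity bound \eqref{assum3}. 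Combined with the LSI in the form $D_\omega \ge S_\omega/(CR)$, Gr\"onwall's inequality delivers, for $t\ge CR\log N$,
\begin{equation*}
S_\omega(g_t)\;\le\; \frac{C N Q}{R} \;+\; e^{-t/(CR)}S_\omega(g_0),
\end{equation*}
and choosing $R\sim t/N^{\e}$ makes the exponential tail negligible while giving $S_\omega(g_t)\lec N^{1+\e}Q/t$.

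It remains to convert this entropy bound on $g_t$ into the desired estimate on the averaged gap observable
$G_J(\bl) := |J|^{-1}\sum_{i\in J}G(N(\lambda_i-\lambda_{i+1}))$.
Since $G_J$ is an average of $|J|$ local functionals, each $1$-Lipschitz in its variables on scale $N^{-1}$, the gradient of $G_J$ has squared norm of order $1/|J|$, so a Herbst-type argument based on the LSI for $\omega$ yields a concentration bound equivalent to $\mathrm{Var}_\omega(G_J)\lec 1/|J|$. The entropy inequality (Bobkov--G\"otze) then gives
\begin{equation*}
\Big|\int G_J\,g_t\,\rd\omega - \int G_J\,\rd\omega\Big|
\;\leq\; \sqrt{2\,\mathrm{Var}_\omega(G_J)\, S_\omega(g_t)} \;\lec\; N^{\e}\sqrt{\frac{NQ}{t|J|}},
\end{equation*}
which is the type of bound appearing on the right-hand side of \eqref{GG}. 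It remains to compare the pinned expectation $\int G_J\rd\omega$ to the free expectation $\int G_J\rd\mu_G$; I would do this by differentiating along the linear interpolation $\omega_s := Z_s^{-1}e^{-s\beta N W}\mu_G$, $s\in[0,1]$, which gives a derivative of covariance type $\beta N\,\mathrm{Cov}_{\omega_s}(G_J,W)$, and bounding it by $\sqrt{\mathrm{Var}_{\omega_s}(G_J)\,\mathrm{Var}_{\omega_s}(W)}$, the second factor being controlled by the rigidity $Q$ on each $\omega_s$.

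The main obstacles, in my view, are twofold. First, the logarithmic Sobolev inequality for $\omega$ has to be proved uniformly in $N$ in spite of the singular long-range drift $\frac{1}{N}\sum_{j\neq i}\frac{1}{\lambda_i-\lambda_j}$ in $\widetilde{\cL}$: the Hessian of $\cH$ is only nonnegative along the standard Weyl chamber and degenerate along the constant direction, so one must verify that the quadratic confinement $W/R$ really dominates all indefinite contributions from the log-interaction, which forces a careful convexity analysis on the eigenvalue simplex. Second, the sharpness of the $|J|^{-1/2}$ scaling is delicate: a naive Pinsker bound would cost a factor $\sqrt{|J|}$, so one must systematically replace Pinsker with the entropy--variance inequality above, and prove the required subgaussian concentration of $G_J$ from the LSI with a Lipschitz constant that \emph{improves} with $|J|$. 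Once these two analytic inputs are in place, the bookkeeping of the constants and the optimization $R \sim t/N^\e$ yield \eqref{GG} as stated.
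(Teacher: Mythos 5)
Your overall architecture (auxiliary quadratic pinning $W$, the measure $\om=\psi\mu_G$, Bakry--\'Emery giving relaxation on scale $\tau\sim R$, and the entropy estimate for $g_t=f_t/\psi$ with the forcing term controlled by the a-priori bound \eqref{assum3}) is exactly the paper's Theorems~\ref{thm2} and \ref{thm1}, up to a bookkeeping factor of $N$ (the Dirichlet form carries a $1/(2N)$ normalization, so the correct entropy bound is $S_\om(g_t)\lesssim N^2Q/\tau$, not $NQ/\tau$; this is also why the right-hand side of \eqref{GG} has $N^2Q$ and not $NQ$). The genuine gap is in your final step, where you pass from the entropy bound to the gap observable. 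First, the gradient computation is wrong: $G_J=|J|^{-1}\sum_{i\in J}G(N(x_i-x_{i+1}))$ has $|\partial_j G_J|\sim N/|J|$, so $\|\nabla G_J\|^2\sim N^2/|J|$, not $1/|J|$ --- the ``$1$-Lipschitz on scale $N^{-1}$'' gloss hides precisely the factor $N^2$ that is the whole difficulty. Second, the inequality $|\int G_J(g_t-1)\,\rd\om|\le\sqrt{2\,\mathrm{Var}_\om(G_J)\,S_\om(g_t)}$ is not valid in general: entropy pairs with an $L^\infty$ bound (Pinsker) or, via LSI/transport (Otto--Villani, Bobkov--G\"otze), with the \emph{Lipschitz constant} through a $W_1$ bound; it does not pair with the variance (that would require a $\chi^2$-type control which entropy does not provide). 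If you run your scheme with the correct ingredients ($L^2\sim N^2/|J|$, LSI constant $\sim\tau/N$, $S_\om(g_t)\sim N^2Q/\tau$), you get a bound of order $\sqrt{N^3Q/|J|}$, which for $|J|\sim N$ and the best available rigidity input $Q\sim N^{-1-2\fa}$ with $\fa<1/2$ does \emph{not} tend to zero; i.e. the Lipschitz/concentration route loses a factor $\sqrt{Nt}$ relative to \eqref{GG} and cannot establish Dyson's conjecture.

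The missing idea is the paper's Theorem~\ref{thm3} (the Dirichlet form inequality): one evolves $q=g_t$ under the pinned flow \eqref{tl} for a time $t$, writes the change of the gap observable as a time integral of $G'(N(x_i-x_{i+1}))[\partial_i q_s-\partial_{i+1}q_s]$, and applies Cauchy--Schwarz against the time-integrated \emph{exchange} term \eqref{0.2} coming from the second (singular) part of the Bakry--\'Emery bound \eqref{0.1}. The crucial cancellation is $[G'(N(x_i-x_{i+1}))]^2(x_i-x_{i+1})^2\le CN^{-2}$: the $(x_i-x_{i+1})^{-2}$ singularity of the exchange Dirichlet form exactly absorbs the factor $N^2$ from differentiating $G(N\cdot)$, yielding the ``Dirichlet form per particle'' bound $C(tD_\om(\sqrt q)/|J|)^{1/2}$ and hence \eqref{GG}. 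Without exploiting this special structure of the gap observable, no LSI-based concentration argument recovers the $1/|J|$ gain at the needed strength. Finally, your comparison of $\int G_J\,\rd\om$ with $\int G_J\,\rd\mu_G$ by interpolating $\om_s$ needs a rigidity bound for the interpolating measures, which is not supplied by \eqref{assum3}; the paper avoids this by simply rerunning the same argument with the Gaussian initial data $f_0=1$, so that $\mu_G$ and $\om$ are compared by the identical mechanism.
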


The test functions can be generalized to 
\be
G\Big( N(x_i-x_{i+1}), N(x_{i+1}-x_{i+2}), \ldots, N(x_{i+{n-1}}-x_{i+n})\Big)
\label{cG}
\ee
for any $n$ fixed  which is needed to identify higher order correlation functions.
 In applications, $J$ is chosen to be the
 indices of the eigenvalues in the interval $[E-b, E+ b]$
and thus $|J| \sim N b$. 
This identifies the gap distributions of eigenvalues completely  and thus also identifies 
the correlation functions and concludes Theorem \ref{thm:DBM}.
Note that the input of this theorem, the apriori estimate \eqref{assum3}, identifies
the location of the eigenvalues only on a scale $N^{-1/2-\fa}$ which is much weaker 
than the $1/N$ precision for the eigenvalue differences in \eqref{GG}.

 By the rigidity estimates (see Corollary \ref{7.1} below),
  the a-priori estimate \eqref{assum3} holds for any $\fa < 1/2$
if the initial data of the DBM is a Wigner ensemble.  
Therefore, Theorem \ref{thmM}  holds for any $t\ge N^{-1 + \e}$ 
for any $\e > 0$ and {\it this establishes Dyson's  conjecture. }

\subsection{Main ideas behind the proof of Theorem \ref{thmM}}

The key method is to  analyze the relaxation to equilibrium of the dynamics \eqref{dy}.
This approach  was first introduced  in Section 5.1 of \cite{ESY4};
the presentation here follows \cite{ESYY}.

 We start with a short review of
the logarithmic Sobolev inequality for a general measure.
Let the probability measure $\mu$ on $\bR^N$ be given by  a general 
Hamiltonian $\cH$:
\be
  \rd\mu(\bx) = \frac{e^{-N \cH(\bx)}}{Z}\rd \bx, \qquad 
\label{convBE}
\ee
and let $\cL$ be the generator, symmetric with respect to the measure $\rd\mu$,
 defined by the associated   Dirichlet form
\be\label{D}
   D(f)= D_\mu(f) = -\int f \cL f \rd \mu := \frac{1}{2N}\sum_j 
  \int(\pt_j f)^2 \rd \mu, \qquad \pt_j = \pt_{x_j}.
\ee
Recall  the relative entropy of two  probability measures:
$$
   S (\nu| \mu ): =   
\int   \frac {\rd \nu}  {\rd \mu} \log \left ( \frac {\rd \nu}  {\rd \mu} \right ) \rd\mu.
$$
If $\rd\nu = f\rd\mu$, then we will sometimes use the notation $S_\mu(f):= S(f\mu|\mu)$.
The entropy can be used to control the total variation norm via the well known inequality 
\be
   \int |f-1|\rd \mu \le \sqrt{ 2 S_\mu(f)}.
\label{entropyneq}
\ee

Let $f_t$ be the solution to  the evolution equation
\be
  \pt_t f_t = \cL f_t,  \qquad t>0,
\label{dybe}
\ee
with a given initial condition $f_0$. The
evolution of the entropy $S_\mu(f_t)=S(f_t \mu | \mu )$
satisfies
\be
  \pt_t S_\mu(f_t )  = -4 D_\mu (\sqrt{f_t}).
\label{derS}
\ee
By Bakry and \'Emery \cite{BakEme},  the evolution of the Dirichlet form
satisfies  the inequality 
\be\label{eq:BE}
   \pt_t D_\mu(\sqrt{f_t}) 
  \le   -  \frac{1}{2 N} \int
 (\nabla \sqrt f_t)(\nabla^2\cH)\nabla \sqrt  f_t \rd \mu.
\ee
If the Hamiltonian is convex, i.e., 
\be\label{convexham}
 \nabla^2\cH(\bx)=\mbox{Hess} \, \cH(\bx) \ge \varpi 
 \qquad \mbox{for all $\bx\in \bR^N$}
\ee 
 with some constant $\varpi>0$,
then  we have
\be
    \pt_t D_\mu (\sqrt{f_t}) \le - \varpi D_\mu(\sqrt{f_t}).
\label{derD}
\ee
Integrating \eqref{derS} and \eqref{derD} back from infinity to 0,
we obtain
the  {\it logarithmic Sobolev inequality (LSI)}
\be
 S_\mu(f) \le \frac{4}{\varpi}  D_\mu (\sqrt{f}), \quad f = f_0
\label{lsi1}
\ee
and the  {\it exponential relaxation
of the entropy and Dirichlet form on time scale $t\sim 1/\varpi$}
\be
    S_\mu(f_t )  \le e^{-t \varpi } S_\mu(f_0  ),
 \quad D_\mu (\sqrt{f_t}) \le e^{-t \varpi } D_\mu (\sqrt{f_0}) .
\label{Sdec}
\ee
As a consequence of the logarithmic Sobolev inequality, 
we also have the concentration inequality 
for any $k$ and $a>0$
\be\label{concen}
\int {\bf 1}\left(|x_k-\E_\mu(x_k)|> a\right)\rd\mu\leq 2e^{-\varpi N a^2/2 }.
\ee
We will not use this inequality in this section, but it will become important in Section \ref{beta}.

 Returning to the  classical ensembles, we  assume from now on
that $\cH$ is given by \eqref{01}  with $V(x)=x^2/2$ and the
equilibrium measure is the Gaussian one, $\mu=\mu_G$.
We then have  the convexity inequality
\be\label{convex}
\Big\langle \bv , \nabla^2  \cH(\bx)\bv\Big\rangle
\ge   \frac{1}{2} \,  \|\bv\|^2 + \frac{1}{N}
 \sum_{i<j} \frac{(v_i - v_j)^2}{(x_i-x_j)^2} \ge  \frac{1 }{2} \,  \|\bv\|^2,
 \qquad \bv\in\bR^N.
\ee
This  guarantees that $\mu$ satisfies the LSI with $\varpi=1/2$ and the relaxation time to 
equilibrium is of order one.

The key idea is that the  relaxation time is in fact
much shorter than order one for local observables that
depend only on the eigenvalue {\it differences}. Equation \eqref{convex} shows that 
the relaxation in the direction $v_i-v_j$ is much faster than order
 one provided that $x_i- x_j$ are close.
However, this effect is hard to exploit directly  due to that all modes of different 
wavelengths are coupled.
Our idea is to add an auxiliary {strongly convex}  potential $W(\bx)$ to the Hamiltonian
to  ``speed up''
the convergence to local equilibrium. On the other hand, we will also 
show that the cost of this  speeding up 
can be effectively controlled if the a-priori estimate \eqref{assum3} holds.

The auxiliary  potential $W(\bx)$ is defined by 
\be
    W(\bx): =    \sum_{j=1}^N
W_j (x_j)   , \qquad W_j (x) := \frac{1}{2 \tau } (x_j -\gamma_j)^2,
\label{defW}
\ee
i.e. it is a quadratic confinement on scale $\sqrt \tau $ for each eigenvalue
near its classical location, where the parameter $\tau>0$ will be chosen later. 
The total Hamiltonian is given by 
\be
\wt \cH: = \cH +W,
\label{def:wth}
\ee
where $\cH$ is the Gaussian Hamiltonian given by \eqref{01}.
The measure with Hamiltonian $\wt \cH$, 
$$
   \rd\om: =\om (\bx)\rd\bx, \quad \om: = e^{-N\wt\cH}/\wt Z,
$$
will be called  the {\it local relaxation  measure}.   This measure was named the  {\it pseudo-equilibrium measure} 
in our previous papers.

The  {\it local relaxation flow}  is defined to be the flow with 
the  generator characterized by  the natural Dirichlet form w.r.t. $\om$, explicitly,  
$\wt \cL$:
\be\label{tl}
\wt \cL = \cL - \sum_j b_j \partial_j, \quad
b_j = W_j'(x_j)= \frac{x_j -\gamma_j}{\tau}.
\ee
 We will typically choose $\tau \ll 1$ 
 so that the additional term $W$  substantially
increases the lower bound \eqref{convexham} on the Hessian, hence speeding up the
dynamics so that the relaxation  time is at most  $\tau$.

\medskip

 The idea of adding an artificial potential $W$ to speed up the convergence appears to be unnatural 
here. The current formulation is a {streamlined version} of a much more complicated approach
that   appeared in 
\cite{ESY4} and which 
 took ideas from the earlier work \cite{ERSY}.  Roughly speaking, 
in hydrodynamical limit, 
the  short wavelength modes always have shorter  relaxation times than the long wavelength modes.
A direct implementation of this idea is extremely complicated 
due to the logarithmic interaction that couples short and long wavelength modes.
 Adding a strongly convex  auxiliary  potential $W(\bx)$
shortens the relaxation time of  the long wavelength modes, 
but it does not affect the short modes, i.e. the local statistics, which are our main interest. 
The analysis of the new system is much simpler since now the relaxation is faster, uniform for
all modes. Finally, we need to compare the local statistics of the original system
with those of the modified one. It turns out that the difference is governed
by  $(\nabla W)^2$ which can be directly controlled  by the 
a-priori estimate \eqref{assum3}.

Our method for enhancing the convexity of $\cH$ is reminiscent of a standard
convexification idea concerning metastable states.
To explain the similarity, consider a particle near one of the
local minima of a double well potential separated by a local maximum,
or energy barrier. Although the potential is not convex globally,
one may still study a reference problem defined by convexifying
 the potential along with the  well in which the particle initially resides.  Before the 
particle reaches the energy barrier, there is no difference between 
these two problems. Thus questions concerning time scales
shorter than the typical escape time can be conveniently answered 
by considering the convexified problem; in particular the
escape time in the metastability problem itself can be estimated
by using convex analysis.
Our DBM problem  is already convex, but not sufficiently convex.
The modification by adding $W$ enhances convexity without altering 
the local statistics. This is similar to  the convexification in the metastability
problem which does not alter events before the escape time.

\subsection{Some details on the proof of  Theorem \ref{thmM}}

The core of the proof is divided into three theorems.
For the flow with generator $\wt \cL$, 
we have the following estimates on the entropy and Dirichlet form.

\begin{theorem}\label{thm2} 
Consider the forward equation
\be
\partial_t q_t=\wt \cL q_t, \qquad t\ge 0,
\label{dytilde}
\ee
with initial condition $q_0=q$ and
with the reversible  measure $\omega$. Assume that   $\int q_0 \rd \om=1$.
Then we have the following estimates
\be\label{0.1}
\partial_t D_{\omega}( \sqrt {q_t}) \le - \frac{1}{2\tau}D_{\omega}( \sqrt {q_t}) -
\frac{1}{2N^2}  \int   \sum_{i,j=1}^N
\frac{  ( \pt_i \sqrt{ q_t} - \pt_j\sqrt {q_t} )^2}{(x_i-x_j)^2} \rd \omega ,
\ee
\be\label{0.2}
\frac{1}{2N^2} \int_0^\infty  \rd s  \int    \sum_{ i,j=1}^N
\frac{(\pt_i\sqrt {q_s} - \pt_j\sqrt {q_s} )^2 }{(x_i-x_j)^2}\rd \omega
\le D_{\omega}( \sqrt {q})
\ee
and the logarithmic Sobolev inequality
\be\label{lsi}
S_\om(q)\le C \tau  D_{\omega}( \sqrt {q})
\ee
with a universal constant $C$.
Thus the relaxation time to equilibrium is of order $\tau$:
\be\label{Sdecay}
 S_{\omega}(q_t)\le e^{-Ct/\tau} S_\omega(q).
\ee
\end{theorem}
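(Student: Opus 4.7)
The plan is to reduce all three assertions to a single input---a uniform lower bound on the Hessian of the modified Hamiltonian $\wt\cH = \cH + W$---and then run the standard Bakry--\'Emery machinery of \eqref{eq:BE}--\eqref{Sdec}. Because $W$ is separable with $W_j''(x) = 1/\tau$, its Hessian is $(1/\tau)I$, so \eqref{convex} upgrades to
\begin{equation*}
\bigl\langle \bv,\, \nabla^2 \wt\cH(\bx)\,\bv\bigr\rangle \;\ge\; \Big(\frac{1}{2} + \frac{1}{\tau}\Big)\|\bv\|^2 + \frac{1}{N}\sum_{i<j}\frac{(v_i-v_j)^2}{(x_i-x_j)^2}, \qquad \bv \in \R^N.
\end{equation*}
For $\tau\le 1$ this gives $\nabla^2 \wt\cH \ge \tau^{-1} I$ as an operator, so \eqref{lsi1} applied to $(\omega,\wt\cL)$ immediately yields the LSI \eqref{lsi} with a universal constant. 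The exponential entropy decay \eqref{Sdecay} then follows from the $\omega$-analog of \eqref{derS}, namely $\partial_t S_\omega(q_t) = -4 D_\omega(\sqrt{q_t})$, combined with \eqref{lsi} and Gr\"onwall's lemma.

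For the Dirichlet-form dissipation \eqref{0.1}, I would substitute $\bv = \nabla\sqrt{q_t}$ into the Hessian bound above and feed the result into the Bakry--\'Emery inequality \eqref{eq:BE} written for the flow \eqref{dytilde} on $(\omega,\wt\cL)$. Using \eqref{D}, the diagonal part contributes $-(\tfrac12+\tfrac1\tau) D_\omega(\sqrt{q_t}) \le -\tfrac{1}{2\tau} D_\omega(\sqrt{q_t})$, while the pair part---after symmetrizing the $i<j$ sum into a full $i,j$ sum and keeping the full $\Gamma_2$ identity (i.e.\ retaining the positive $\|\nabla^2\sqrt{q_t}\|^2$ term rather than discarding it as in \eqref{eq:BE})---yields exactly the pair contribution on the right-hand side of \eqref{0.1} with the stated constant $\tfrac{1}{2N^2}$. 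Estimate \eqref{0.2} is then obtained by dropping the non-positive first term on the right-hand side of \eqref{0.1}, integrating from $t=0$ to $t=\infty$, and using that $D_\omega(\sqrt{q_t}) \to 0$ as $t\to\infty$ (a consequence of \eqref{Sdecay} together with the LSI, or alternatively of the ergodicity of $\wt\cL$ implied by the convexity).

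The main obstacle is conceptual rather than technical: one must discover the right auxiliary potential $W$. Its role is twofold---it boosts the uniform convexity of the Hamiltonian from $O(1)$ to $O(1/\tau)$, thereby shrinking the relaxation time from $O(1)$ to $O(\tau)$, while simultaneously leaving the pair-interaction term in \eqref{convex} completely untouched, because $W$ is a sum of single-variable functions. This second feature is what later allows one to compare the local statistics of $\omega$ with those of the Gaussian log-gas while paying only for $\int\|\nabla W\|^2$-type penalties controlled by the a-priori estimate \eqref{assum3}. Once the definition \eqref{defW} is in hand, the rest of the proof is pure Bakry--\'Emery bookkeeping.
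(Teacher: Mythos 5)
Your proposal is correct and follows essentially the same route as the paper: lower-bound the Hessian of $\wt\cH$ using \eqref{convex} together with $W_j''=1/\tau$, feed this into the Bakry--\'Emery inequality for the $\omega$-dynamics to obtain \eqref{0.1}, integrate in time for \eqref{0.2}, and deduce \eqref{lsi} and \eqref{Sdecay} by the standard arguments \eqref{lsi1}--\eqref{Sdec}. (Your parenthetical about retaining the $\|\nabla^2\sqrt{q_t}\|^2$ term is unnecessary---the pair term in \eqref{0.1} comes directly from the Hessian lower bound, exactly as in the paper's \eqref{convex4}---but this does not affect the argument.)
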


\begin{proof} Denote by $h=\sqrt{q}$ and we have the equation 
\be\label{1.5}
\pt_t D_\om ( h_t) =   \pt_t \frac{1}{2N}\int 
(\nabla h)^2 e^{- N \wt\cH} \rd \bx
  \le  - \frac{1}{2 N }\int
\nabla h (\nabla^2 \wt\cH)\nabla h  e^{- N \wt\cH} \rd\bx.
\ee
In our case, \eqref{convex} and \eqref{defW} imply  that the Hessian of $\wt \cH$ is bounded
from below as
\be\label{convex4}
\nabla h (\nabla^2 \wt\cH)\nabla h
\ge        \frac C { \tau} \sum_j  (\partial_j h)^2 +
\frac{1}{2N   }  \sum_{ i,j}  \frac 1 {(x_i - x_j)^2} (\pt_i h -
\pt_j h)^2
\ee
with some positive constant $C$.
This proves \eqref{0.1} and \eqref{0.2}. The  rest can be proved by straightforward arguments
given in the earlier part of this section. 
\end{proof}

\medskip

The estimate \eqref{0.2}
plays a key role in the next theorem.

\begin{theorem}[Dirichlet form inequality]\label{thm3}
Let $q$ be a probability density $\int q\rd\om=1$ and
let  $G:\bR\to\bR$ be a  smooth function with compact support.
Then  for any $J\subset \{ 1, 2, \ldots , N-1\}$ and any $t>0$ we have
\be\label{diff}
\Big| \int \frac 1 {|J|} \sum_{i\in J} G(N(x_i - x_{i+1})) q \rd \omega -
\int \frac 1 {|J|} \sum_{i\in J} G(N(x_i - x_{i+1}) ) \rd \omega \Big|
\le C \Big( t  \frac { D_\omega (\sqrt {q})  }{|J|}  \Big)^{1/2}  + C \sqrt
{S_\om(q)} e^{-c t/\tau} .
\ee
\end{theorem}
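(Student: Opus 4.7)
\medskip

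\noindent\textbf{Proof plan.}
Let $q_s$ denote the solution of the forward flow \eqref{dytilde} with $q_0=q$, and write $G_J(\bx) := |J|^{-1}\sum_{i\in J} G(N(x_i-x_{i+1}))$. The plan is to interpolate between the measures $q\,\rd\om$ and $\rd\om$ along this flow, splitting
\begin{equation*}
\int G_J\,q\,\rd\om - \int G_J\,\rd\om \;=\; \Bigl(\int G_J q_0\,\rd\om - \int G_J q_t\,\rd\om\Bigr) + \int G_J(q_t - 1)\,\rd\om .
\end{equation*}
The first (``short-time'') bracket will be controlled by the Dirichlet-form dissipation bound \eqref{0.2}, the second (``long-time'') by the entropy decay \eqref{Sdecay} together with the total-variation inequality \eqref{entropyneq}. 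This decomposition mirrors the strategy that the auxiliary convexification forces quick relaxation while the $G_J$-dependent piece is dominated by dissipation of a matching quadratic form.

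\medskip

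\noindent\textbf{Short-time estimate.}
Self-adjointness of $\wt\cL$ with respect to $\om$ yields
\begin{equation*}
\int G_J(q_0 - q_t)\,\rd\om \;=\; \int_0^t \frac{1}{2N}\int \nabla G_J \cdot \nabla q_s\,\rd\om\,\rd s \;=\; \int_0^t \frac{1}{N}\int \sqrt{q_s}\,\bigl(\nabla G_J \cdot \nabla \sqrt{q_s}\bigr)\rd\om\,\rd s .
\end{equation*}
The nearest-neighbor structure of $G_J$ gives
$\nabla G_J \cdot \nabla \sqrt{q_s} \;=\; \tfrac{N}{|J|}\sum_{i\in J} G'(N(x_i-x_{i+1}))(\pt_i\sqrt{q_s} - \pt_{i+1}\sqrt{q_s})$,
and the compact support of $G'$ gives the pointwise bound $[G'(N(x_i-x_{i+1}))]^2(x_i-x_{i+1})^2 \le CN^{-2}$ on its support. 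Cauchy--Schwarz in $i$ then extracts precisely the quadratic form appearing in \eqref{0.2}:
\begin{equation*}
|\nabla G_J\cdot \nabla \sqrt{q_s}| \;\le\; \frac{C}{|J|^{1/2}}\Biggl(\sum_{i\in J} \frac{(\pt_i \sqrt{q_s} - \pt_{i+1}\sqrt{q_s})^2}{(x_i-x_{i+1})^2}\Biggr)^{1/2}.
\end{equation*}
Two more applications of Cauchy--Schwarz---first in $\rd\om$ (using $\int q_s\,\rd\om = 1$), then in $s \in [0,t]$---followed by dominating the space-time integral by the fully off-diagonal version of \eqref{0.2}, yield $|\int G_J(q_0-q_t)\,\rd\om| \le C\sqrt{t D_\om(\sqrt q)/|J|}$.

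\medskip

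\noindent\textbf{Long-time estimate and main obstacle.}
Since $|G_J|\le \|G\|_\infty$, the second bracket is bounded by $\|G\|_\infty \int|q_t-1|\,\rd\om$, and \eqref{entropyneq} combined with \eqref{Sdecay} gives $\int|q_t-1|\,\rd\om \le \sqrt{2\,S_\om(q_t)} \le C\sqrt{S_\om(q)}\,e^{-ct/\tau}$. Summing the two bounds produces \eqref{diff}. The delicate point---and what has to be executed carefully---is the pairing between the nearest-neighbor test function $G_J$ and the globally coupled Dirichlet quantity in \eqref{0.2}: the algebraic identity $[G'(Nz)]^2 z^2 = O(N^{-2})$, forced by $\spt G' \subset [-K,K]$, is exactly what cancels the singular $(x_i-x_{i+1})^{-2}$ weight in \eqref{0.2} and supplies the crucial $|J|^{-1/2}$ prefactor. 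This gain is precisely what will enable relaxation of local gap statistics on the short time-scale $\tau\ll 1$ when Theorem~\ref{thm3} is fed into the proof of Theorem~\ref{thmM}.
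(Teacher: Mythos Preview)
Your proof is correct and follows essentially the same route as the paper: the same two-term splitting along the flow $q_s$, the same use of \eqref{0.2} for the short-time piece (exploiting $[G'(Nz)]^2 z^2 = O(N^{-2})$ to cancel the singular weight), and the same entropy-decay bound \eqref{Sdecay} plus \eqref{entropyneq} for the long-time piece. The only cosmetic difference is that the paper applies one Cauchy--Schwarz jointly in $(s,\om,i)$, whereas you split it into three nested applications; the outcome is identical.
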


{\it Proof.} For simplicity, we  assume that $J =  \{ 1, 2, \ldots , N-1\}$. 
Let $q_t$ satisfy
\[
\partial_t q_t = \wt \cL q_t, \qquad t\ge 0,
\]
with an initial condition $q_0=q$.
We write
\begin{align}
\int \Big[  \frac 1 {|J|} & \sum_{i\in J} G(N(x_i - x_{i+1}))\Big] (q-1)\rd\om \nonumber\\
&  =  \int \Big[  \frac 1 {|J|} \sum_{i\in J}G(N(x_i - x_{i+1}))\Big] (q-q_t)\rd\om
 +   \int \Big[  \frac 1 {|J|} \sum_{i\in J} G(N(x_i - x_{i+1})) \Big] (q_t-1)\rd\om.
\label{splitt}
\end{align}
The second term can be estimated by  \eqref{entropyneq},
 the decay of the entropy \eqref{Sdecay} and the boundedness of $G$; 
this gives the second term in \eqref{diff}.

To estimate the first term in \eqref{splitt},
 by the evolution equation  $\pt q_t =\wt \cL q_t$ and the definition of $\wt \cL$: 
\begin{align}
\int  \frac 1 {|J|} \sum_{i\in J} &G(N(x_i - x_{i+1}))q_t \rd \omega  -
\int  \frac 1 {|J|} \sum_{i\in J} G(N(x_i - x_{i+1}))q_0 \rd \omega \non\\
&= \int_0^t  \rd s \int   \frac 1 {|J|} \sum_{i\in J}
   G'( N(x_i-x_{i+1}))
[\pt_{i} q_s - \pt_{i+1}q_s]  \rd \omega. \non
\end{align}
{F}rom the Schwarz inequality and $\pt q = 2 \sqrt{q}\pt\sqrt{q}$,
the last term is bounded by
\begin{align}\label{4.1}
2 & \left [   \int_0^t  \rd s \int_{\bR^N} 
 \frac {N^2} {|J|^2} \sum_{i\in J} \Big[ G' (N(x_i - x_{i +1}))\Big] ^2 
(x_{i}-x_{i+1})^2  \, q_s \rd \omega
\right ]^{1/2} \nonumber \\
 &\times \left [ \int_0^t  \rd s \int_{\bR^N}  \frac 1 {N^2 } \sum_i
\frac{1}{(x_{i}-x_{i+1})^2}  [ \pt_{i}\sqrt {q_s} -
\pt_{i+1}\sqrt {q_s}]^2  \rd \omega \right ]^{1/2} \nonumber \\
\le &  \; C \Big( \frac{D_\omega(\sqrt {q_0}) t}{|J|}\Big)^{1/2},
\end{align}
where we have used \eqref{0.2} and that
$ \Big[ G' (N(x_i - x_{i +1}))\Big]^2
 (x_{i}-x_{i+1})^2 \le CN^{-2}$
due to  $G$ being smooth and  compactly
supported.
\qed

\medskip

Alternatively, we could have directly estimated  the left hand side 
of \eqref{diff} by using the total variation norm between $q \om$ and $\om$, 
which  in turn could be estimated by the entropy 
  \eqref{entropyneq} and the Dirichlet form using the logarithmic Sobolev
inequality,  i.e., 
by 
\be\label{entbound}
C \int | q - 1| \rd\om \le  C \sqrt { S_\om(q) } \le C \sqrt { \tau D_\om (\sqrt q)}.
\ee
However, compared with this simple bound, the estimate
  \eqref{diff} gains an extra factor $|J|\sim N$ in the denominator,
 i.e. it is in terms of Dirichlet form {\it per particle}.
The improvement
 is due to the observable in \eqref{diff} being of special form and we exploit the 
term \eqref{0.2}.

\bigskip

The final  ingredient in proving Theorem \ref{thmM} is the following entropy and
Dirichlet form estimates.

\begin{theorem}\label{thm1}
Suppose that  \eqref{convex} holds. Let $\fa>0$ be fixed and
recall the definition of $Q=Q_\fa$ from \eqref{assum3}.
Fix a constant $\tau \ge  N^{-2 \fa}$ and consider the local relaxation 
measure $\omega$ with this $\tau$. 
Set $\psi:=\om/\mu$ and
let  $g_t: = f_t/\psi$.
Suppose there is a constant $m$ such that 
\be\label{entA}
S(f_{\tau} \om | \om )\le CN^m.
\ee
Then for any $ t \ge \tau N^\e$
the entropy and the Dirichlet form satisfy the estimates:
\be\label{1.3}
S(g_t \omega | \omega) \le
 C   N^2    Q \tau^{-1}, \qquad
D_\omega (\sqrt{g_t})
\le CN^2  Q \tau^{-2}
\ee
 where the constants depend on $\e$ and $m$.
\end{theorem}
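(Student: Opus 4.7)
The plan is to establish both bounds by Gr\"onwall arguments. The damping will come from Theorem~\ref{thm2} (the LSI for the entropy, and the Bakry--\'Emery-type dissipation \eqref{0.1} for the Dirichlet form), while the forcing will come from a correction term that reflects the fact that $g_t$ does \emph{not} satisfy the $\wt\cL$-equation -- it only satisfies $\partial_t g_t = \psi^{-1}\cL(g_t\psi)$, because by definition $g_t\omega = f_t\mu$ and $\partial_t f_t = \cL f_t$. The rigidity hypothesis $Q$ from \eqref{assum3} is what controls this correction.

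For the entropy, I would differentiate $S(g_t\omega|\omega) = \int g_t\log g_t\, d\omega$ and use reversibility of $\cL$ with respect to $\mu$ together with $\log\psi = -NW + \mathrm{const}$, so that $\nabla\log\psi = -Nb$ with $b_j = (x_j-\gamma_j)/\tau$ as in \eqref{tl}. A direct computation yields
\begin{equation}
\partial_t S(g_t\omega|\omega) = -4 D_\omega(\sqrt{g_t}) + \tfrac{1}{2}\int b\cdot\nabla g_t\, d\omega.
\end{equation}
By Cauchy--Schwarz (using $\nabla g_t = 2\sqrt{g_t}\,\nabla\sqrt{g_t}$ and $g_t\, d\omega = f_t\, d\mu$),
\begin{equation}
\Bigl|\tfrac{1}{2}\int b\cdot\nabla g_t\, d\omega\Bigr| \le \sqrt{2N\, D_\omega(\sqrt{g_t})}\cdot\Bigl(\int |b|^2 f_t\, d\mu\Bigr)^{1/2}.
\end{equation}
The definition of $b$ and the a-priori estimate \eqref{assum3} give $\int |b|^2 f_t\, d\mu \le NQ/\tau^2$ uniformly for $t\ge N^{-2\fa}$, and after absorbing a small fraction of the Dirichlet form into the dissipation via $ab \le a^2/N + Nb^2/4$, one obtains
\begin{equation}
\partial_t S(g_t\omega|\omega) \le -2 D_\omega(\sqrt{g_t}) + \frac{C N^2 Q}{\tau^2}.
\end{equation}
Combining this with the LSI \eqref{lsi} of Theorem~\ref{thm2} (which supplies $D_\omega(\sqrt{g_t})\ge c\, S(g_t\omega|\omega)/\tau$) yields the Gr\"onwall inequality $\partial_t S \le -(c/\tau)S + CN^2Q/\tau^2$. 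Integrating from $t=\tau$ -- where the hypothesis \eqref{entA} furnishes the finite starting value $S\le CN^m$ -- forward to any time $t\ge \tau N^\e$, the exponential factor $e^{-c(t-\tau)/\tau}$ annihilates the $N^m$ term and leaves the first bound in \eqref{1.3}, namely $S(g_t\omega|\omega)\le CN^2Q/\tau$.

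For the Dirichlet-form bound I would run a parallel argument on $\partial_t D_\omega(\sqrt{g_t})$. Differentiating directly and exploiting the enhanced convexity \eqref{convex4} of $\wt\cH$ (whose Hessian is bounded below by $c/\tau$) produces a main contribution of size $\le -(c/\tau)\, D_\omega(\sqrt{g_t})$, exactly in the spirit of \eqref{0.1}. The drift $\cL - \wt\cL = b\cdot\nabla$ again introduces a correction which, after the same type of Cauchy--Schwarz estimate against $\int|b|^2 f_t\, d\mu \le NQ/\tau^2$, is bounded by $CN^2Q/\tau^3$. A second Gr\"onwall inequality then yields $D_\omega(\sqrt{g_t})\le CN^2Q/\tau^2$ for $t\ge \tau N^\e$, provided a finite starting value for the Dirichlet form is available at some time close to $\tau$; this can be obtained by integrating \eqref{0.2} over a short interval and invoking \eqref{entA} together with the LSI to bound the initial Dirichlet form.

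The main obstacle is the drift correction appearing in both computations. Because $g_t$ does not satisfy a closed equation under $\wt\cL$, the Bakry--\'Emery machinery of Theorem~\ref{thm2} cannot be applied directly, and this cross term is precisely where the rigidity input $Q$ is coupled to the local relaxation. A secondary technical point is to verify that the possibly large prefactor $N^m$ in \eqref{entA} is killed by the exponential decay on the relaxation scale $\tau$ before $t$ reaches $\tau N^\e$ -- this is the sole role of the hypothesis \eqref{entA} and the reason the final bounds in \eqref{1.3} depend on $Q$ and $\tau$ alone.
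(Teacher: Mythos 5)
Your entropy bound is essentially the paper's own argument: differentiate $S(f_t\mu|\om)=S_\om(g_t)$, note that the $\wt\cL$-term vanishes by invariance of $\om$, estimate the drift term $\sum_j\int b_j\,\partial_j g_t\,\rd\om=2\sum_j\int b_j\sqrt{g_t}\,\partial_j\sqrt{g_t}\,\rd\om$ by Schwarz against $\sum_j\int b_j^2 f_t\,\rd\mu\le NQ\tau^{-2}$ (this is where \eqref{assum3} enters), absorb a fraction of the Dirichlet form, apply the LSI \eqref{lsi}, and run Gr\"onwall from time $\tau$ with \eqref{entA} supplying the initial value that the exponential factor kills for $t\ge\tau N^\e$. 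That half is correct and coincides with \eqref{1.1}--\eqref{1.2new} in the paper.

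The Dirichlet-form half has a genuine gap. You claim a differential inequality $\partial_t D_\om(\sqrt{g_t})\le-(c/\tau)\,D_\om(\sqrt{g_t})+CN^2Q\tau^{-3}$ ``in the spirit of \eqref{0.1}'' with the drift correction handled by the same Cauchy--Schwarz. But \eqref{0.1} (and the Bakry--\'Emery inequality \eqref{eq:BE} behind it) is a statement about solutions of $\partial_t q=\wt\cL q$, and $g_t$ is not such a solution: since $g_t\om=f_t\mu$ and $\partial_t f_t=\cL f_t$, one has $\partial_t g_t=\psi^{-1}\cL(\psi g_t)=\wt\cL g_t+U g_t$ with the multiplication operator $U=\psi^{-1}\cL\psi$, which (up to the paper's normalization constants) equals $\frac{N}{2}|\nabla W|^2-\frac12\Delta W+\frac{N}{2}\nabla\cH\cdot\nabla W$. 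Redoing the time derivative of $D_\om(\sqrt{g_t})$ with this extra term produces, besides the good Hessian term from \eqref{convex4}, corrections of the form $\frac{1}{2N}\int U\,|\nabla\sqrt{g_t}|^2\,\rd\om-\frac12\int g_t\,(\wt\cL U)\,\rd\om$; these couple an unbounded potential to the gradient and involve derivatives of $U$, hence singular factors of the type $\sum_{i\ne j}(x_i-x_j)^{-2}$, none of which is controlled by the a-priori input $Q$, which only bounds $\int\sum_j(x_j-\gamma_j)^2 f_t\,\rd\mu$. The entropy derivative is special precisely because its correction is \emph{linear} in $b$ and factorizes through $\sqrt{g_t}\,\nabla\sqrt{g_t}$; no analogous reduction to $\int|b|^2 f_t\,\rd\mu$ is available here. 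For the same reason your proposed initial Dirichlet bound via \eqref{0.2} is unavailable: that estimate, too, is for the $\wt\cL$-flow $q_t$, not for $g_t$. The paper circumvents all of this: it integrates \eqref{1.1} in time, using the already-proved entropy bound, to obtain $\int_\tau^t D_\om(\sqrt{g_s})\,\rd s\le CN^2Q\tau^{-1}$; picks a good time $s$ at which $D_\om(\sqrt{g_s})\le CN^2Q\tau^{-2}$; converts between $D_\om(\sqrt{g_s})$ and $D_\mu(\sqrt{f_s})$, which differ by at most $CN^2Q\tau^{-2}$ because $\nabla\sqrt{f}=\sqrt{\psi}\,\big(\nabla\sqrt{g}-\tfrac{N}{2}(\nabla W)\sqrt{g}\big)$ and $N\int|\nabla W|^2 f_t\,\rd\mu\le N^2Q\tau^{-2}$; and then uses that $t\mapsto D_\mu(\sqrt{f_t})$ is non-increasing, since $f_t$ is the genuine gradient flow of the convex Hamiltonian $\cH$, so Bakry--\'Emery applies there cleanly as in \eqref{derD}. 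This monotonicity of $D_\mu(\sqrt{f_t})$ is the ingredient missing from your proposal; without it, a bound at one good time cannot be propagated to time $t$, because the Gr\"onwall inequality you would need for $D_\om(\sqrt{g_t})$ is exactly what cannot be established by your method.
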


{\it Proof.}  
The evolution of the entropy $S(f_t \mu |\om)= S_{\omega} (g_t) $
 can be computed explicitly by the formula \cite{Y}
$$
\partial_t  S(f_t \mu |\om)  = -  \frac{2}{N}  \sum_{j} \int (\partial_j
\sqrt {g_t})^2  \, \psi \, \rd\mu
+\int g_t  \cL \psi \, \rd\mu.
$$
Hence
we have, by using \eqref{tl},
$$
\pt_t S(f_t \mu |\om)
= - \frac{ 2}{N}  \sum_{j} \int (\partial_j
\sqrt {g_t})^2  \, \rd\omega
+\int   \wt \cL g_t  \, \rd\omega+  \sum_{j} \int  b_j \partial_j
g_t   \, \rd\omega.
$$
Since $\om$ is $\wt \cL$-invariant and time independent,
 the middle term on the right hand side vanishes, and
from the Schwarz inequality
\be\label{1.1}
 \pt_t S(f_t \mu |\om) \le  -D_{\omega} (\sqrt {g_t})
+   C  N\sum_{j} \int  b_j^2  g_t   \, \rd\omega \le
-D_{\omega} (\sqrt {g_t})
+   C  N^2 Q \tau^{-2} .
\ee
Together with the logarithmic Sobolev inequality  \eqref{lsi},  we have
\be\label{1.2new}
\partial_t S(f_t \mu |\om)\le  -D_{\omega} (\sqrt {g_t})
+   C  N^2 Q \tau^{-2}  \le  - C\tau^{-1}  S(f_t \mu |\om)+
 C  N^2  Q \tau^{-2}.
\ee
Integrating the last 
inequality  from $\tau$ to $t$ and using the assumption \eqref{entA}
and $t\ge \tau N^\e$, 
we have proved the first inequality of \eqref{1.3}. 
Using this result and integrating \eqref{1.1}, we have 
$$
\int_\tau^t  D_{\omega} (\sqrt {g_s})  \rd s  \le  C   N^2    Q \tau^{-1}. 
$$
By the convexity of the Hamiltonian, $ D_{\mu} (\sqrt {f_t}) $  is decreasing  in $t$. 
Since $ D_{\omega} (\sqrt {g_s}) \le  CD_{\mu} (\sqrt {f_s}) +  CN^2 Q \tau^{-2}$,
 this proves 
the second  inequality of \eqref{1.3}.
\qed

\bigskip
Finally, we complete the proof of Theorem \ref{thmM}. For any given $t>0$ we
 now choose 
$\tau: =  t N^{-\e}$ and we construct the local relaxation measure $\om$
with this $\tau$. Set $\psi= \om/\mu$ and
let $q:= g_{t}= f_{t}/\psi$  be the density $q$ in
Theorem \ref{thm3}.
 Then  Theorem \ref{thm1}, Theorem \ref{thm3}
 and an easy bound on the entropy $S_\om(q)\le CN^m$
imply  that
\be\label{diff1}
\Big| \int \frac 1 N \sum_{i\in J} G(N(x_i - x_{i+1}))(   f_t \rd \mu -
  \rd \omega) \Big| 
\le C \Big( t  \frac { D_\omega (\sqrt {q})  }{|J|}  \Big)^{1/2}  + C \sqrt
{S_\om(q)} e^{-c N^\e} .
\ee
\be\nonumber
\le C \Big( t  \frac { N^2 Q  }{|J| \tau^2 }  \Big)^{1/2}+  Ce^{-c N^{\e}}
    \le 
C N^{\e} \sqrt{  \frac {N^2 Q} { |J| t }}
+  Ce^{-c N^{\e}} , 
\ee
i.e.,  the local statistics of $f_t \mu$ and $\om$ are the same for any
 initial data  $f_\tau$ for which  
\eqref{entA} is satisfied. {Applying the same argument to 
the Gaussian initial data, $f_0=f_\tau=1$,
we can also compare $\mu$ and $\om$.}
   We have thus  proved \eqref{GG} and hence the 
universality. 
\qed

\section{Local semicircle law via Green function}\label{sec:refined}

The Wigner semicircle law asserts that \eqref{sc} is valid in  a weak limit,
 i.e., for any smooth test function $O$ with compact support 
we have 
\be
\E \int_{\bR} O(x) \left [ \varrho_N (x) -  \varrho_{sc}(x)\right] \rd x  \to 0.
\ee
This means that the density of eigenvalues in a window independent of $N$ is given by the 
semicircle law. Our goal is to prove a local version of this result 
for  windows  slightly larger than $1/N$ and in a large deviation sense.  The main object to study 
is the Green function of the matrix $G(z) = [H-z]^{-1}$,   $z=E+i\eta$, $E\in \R$, $\eta>0$, 
which is related to the  
 {\it Stieltjes transform} of the empirical measure:
\be
   m(z)= m_N(z): = \frac{1}{N}\tr \frac{1}{H-z} = \frac{1}{N}\sum_{j=1}^N
  \frac{1}{\lambda_j-z}=\int_\R \frac{\rd\varrho_N(x)}{x-z} = \frac 1 N \sum_{j=1}^N G_{jj} (z) .
\label{St}
\ee
We will compare it  with $m_{sc}(z): = \int_\R (x-z)^{-1}\varrho_{sc}(x)\rd x$,
 the Stieltjes transform of the semicircle law.
This is the content of the local semicircle law,  Theorem \ref{lsc} below.
The key parameter is  $\eta=\im z$ which determines the resolution, i.e.
the scale on which the local semicircle law holds.

For the rest of this paper, we will assume that the probability
distribution of the matrix elements satisfy 
the following subexponential condition:
\be\label{subexp}
   \P \big( |v_{ij}|\ge x )\le C_0\exp\big(-x^\ttau\big), \qquad x>0, 
\ee
with some positive constants $C_0, \ttau$,
where we set $v_{ij}=\sqrt{N}h_{ij}$.
This condition can be relaxed to \eqref{4+e} via a cutoff argument,
 but we will not discuss such technical details here.

\begin{theorem}[Local semicircle law]\cite[Theorem 2.1]{EYYrigi}\label{lsc}{}
Let $H=(h_{ij})$ be a Hermitian or symmetric $N\times N$ 
random matrix
with $\E\, h_{ij}=0$, $1\leq i,j\leq N$.  
Suppose that the distributions of the matrix elements have a uniformly 
  subexponential decay \eqref{subexp}.
Then  there exist positive constants  
 $A_0 > 1$,  $ C, c$ and $\phi < 1$   such that
with    
\be
L:= A_0\log\log N
\label{Lbound}
\ee
 the following estimates hold for any sufficiently large $N\ge N_0(C_0,\ttau)$:

(i) The Stieltjes transform of the empirical 
eigenvalue distribution of  $H $  satisfies 
\be\label{Lambdafinal} 
\P \Big ( \bigcup_{z\in \bS_L} \Big\{ |m(z)-m_{sc}(z)| 
 \ge \frac{(\log N)^{4L}}{N\eta} \Big\}   \Big )\le  C\exp{\big[-c(\log N)^{\phi L} \big]}, 
\ee
where
\be
{\bf  S}_L:=\Big\{ z=E+i\eta\; : \;
 |E|\leq 5,  \quad  N^{-1}(\log N)^{10L} < \eta \le  10  \Big\}.
\label{defS}
\ee

(ii) The individual  matrix elements of
the Green function  satisfy
\be\label{Lambdaodfinal}
\P \left  ( \bigcup_{z\in \bS_L} \left\{ \max_{i,j}\big|G_{ij}(z)-\delta_{ij}m_{sc}(z)\big|
 \geq 
(\log N)^{4L} \sqrt{\frac{\im m_{sc}(z)  }{N\eta}} + \frac{(\log N)^{4L}}{N\eta}
   \right\}    \right)
\leq  C\exp{\big[-c(\log N)^{\phi L} \big]}.
\ee

\end{theorem}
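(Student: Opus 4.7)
The plan is to derive and stabilize a self-consistent equation for the Stieltjes transform $m(z)$ and the individual Green function entries $G_{ij}(z)$. Writing $G(z)=(H-z)^{-1}$, the Schur complement formula gives, for each index $i$,
\[
\frac{1}{G_{ii}(z)} \;=\; h_{ii} - z - \sum_{k,l \ne i} h_{ik}\, G^{(i)}_{kl}(z)\, h_{li},
\]
where $G^{(i)}$ is the Green function of the minor $H^{(i)}$ obtained by deleting row and column $i$. Splitting the double sum into its partial expectation with respect to the $i$-th row and a centered fluctuation, and using eigenvalue interlacing to replace the minor Stieltjes transform $m^{(i)}(z)$ by $m(z)$ up to $O((N\eta)^{-1})$, this identity rewrites as
\[
\frac{1}{G_{ii}(z)} \;=\; -z - m(z) + \Upsilon_i(z),
\]
with $\Upsilon_i$ collecting the diagonal noise $h_{ii}$, the centered quadratic form $Z_i:=\sum_{k,l\ne i} h_{ik}G^{(i)}_{kl}h_{li}-m^{(i)}(z)$, and the interlacing remainder. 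An analogous resolvent expansion through $G^{(ij)}$ controls the off-diagonal entries.

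First I would obtain large deviation estimates for $Z_i$ and the bilinear analogues controlling $G_{ij}$ for $i\ne j$. Conditioning on $H^{(i)}$ and using the subexponential decay \eqref{subexp}, a Hanson--Wright type bound for quadratic forms in independent variables yields, with probability at least $1-\exp\bigl(-c(\log N)^{\phi L}\bigr)$,
\[
|Z_i(z)| \;\lec\; (\log N)^{CL}\sqrt{\frac{\im m^{(i)}(z)}{N\eta}};
\]
the factor $\im m^{(i)}/(N\eta)$ is produced by the Ward identity $\sum_{k\ne i}|G^{(i)}_{ik}|^2=\eta^{-1}\im G^{(i)}_{ii}$.

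Next I would stabilize the semicircle fixed point $m_{sc}+(z+m_{sc})^{-1}=0$ under these random perturbations. Setting $\Lambda(z):=\max_{i,j}|G_{ij}(z)-\delta_{ij}m_{sc}(z)|$, a perturbative analysis of the scalar equation gives a stability bound $\Lambda \lec |\Upsilon|/\sqrt{\kappa+\eta+\Lambda}$, where $\kappa$ is the distance of $\re z$ to $[-2,2]$; since $\im m_{sc}\sim\sqrt{\kappa+\eta}$ this remains usable throughout $\bS_L$, including the edge. I would then run a continuity bootstrap in $\eta$: at $\eta\sim 1$ the crude bound $\|G\|\le \eta^{-1}$ combined with the already strong concentration of $Z_i$ (since $N\eta\sim N$) controls $\Lambda$; lowering $\eta$ in small deterministic steps down to $\eta\ge N^{-1}(\log N)^{10L}$ and using Lipschitz continuity of $G$ and $m$ in $\eta$ propagates smallness, closing the bootstrap at each scale by dominating the right-hand side of the self-consistent equation. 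This step already delivers the individual entry estimate \eqref{Lambdaodfinal}.

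The improved averaged rate $(N\eta)^{-1}$ in \eqref{Lambdafinal} requires cancellation when summing $\Upsilon_i$ over $i$, since the naive pointwise bound only gives $\sqrt{\im m_{sc}/(N\eta)}$ after averaging. I would invoke the Fluctuation Averaging Lemma (Lemma \ref{cancel}), which exploits the independence of $\{h_{ik}\}_{k\ne i}$ from $H^{(i)}$ via a systematic expansion through successive minors to prove that weighted sums $N^{-1}\sum_i c_i(H)\,\Upsilon_i$ concentrate at the faster scale $(N\eta)^{-1}$. The main technical obstacle is exactly this averaging step: the $\Upsilon_i$ are strongly correlated through their shared dependence on $H$, so extracting the gain requires expanding the average into moments of increasing order, combinatorially tracking which minor indices appear in each term, and using the subexponential moment hypothesis \eqref{subexp} to absorb factorial growth. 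A secondary difficulty is maintaining stability of the self-consistent equation uniformly up to the spectral edge, where the implicit function argument degenerates; this is mitigated by the matching degeneration of $\im m_{sc}$, which keeps the fluctuation and stability estimates self-consistent on all of $\bS_L$.
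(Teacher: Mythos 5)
Your proposal follows essentially the same route as the paper's argument: the Schur complement identity leading to a self-consistent equation for $G_{ii}$, large deviation control of the quadratic fluctuation $Z_i$ via the Ward-type identity producing $\im m^{(i)}/(N\eta)$, stability of the equation for $m_{sc}$ combined with a continuity bootstrap in $\eta$, and the Fluctuation Averaging Lemma to upgrade the averaged error from $(N\eta)^{-1/2}$ to $(N\eta)^{-1}$. In fact you also make explicit two points the review's sketch suppresses (the bootstrap in $\eta$ and the edge stability through the degeneration of $\im m_{sc}\sim\sqrt{\kappa+\eta}$), so the proposal is a faithful, slightly fuller version of the same proof.
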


Theorem~\ref{lsc} is the strongest form of the local semicircle law
that gives optimal error estimates (modulo logarithmic factors)
on the smallest possible scale, which is valid uniformly in the
spectrum including the edge, and which controls not only the Stieltjes
transform but also individual  matrix elements of the resolvent.
This theorem is the final result of subsequent improvements
in \cite{ESY2, ESY3, EYY, EYY2, EYYrigi}
of our first local semicirle law in \cite{ESY1}.

\medskip

The local semicircle estimates imply that the $j$-th eigenvalue,  $\lambda_j$,  is very close to
its classical location $\gamma_j$, defined in \eqref{def:gamma}:
\begin{corollary} [Rigidity of eigenvalues] \cite[Theorem 2.2]{EYYrigi}\label{7.1}
Under the assumptions of Theorem~\ref{lsc}  we have
\be\label{rigidity}
\P \Bigg\{  \exists j\; : \; |\lambda_j-\gamma_j| 
\ge (\log N)^{ L}  \Big [ \min \big ( \, j ,  N-j+1 \,  \big) \Big  ]^{-1/3}   N^{-2/3} \Bigg\}
 \le  C\exp{\big[-c(\log N)^{\phi L} \big]}
\ee
for any sufficiently large $N\ge N_0$. 
\end{corollary}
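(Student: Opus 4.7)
The plan is to derive the rigidity estimate from the local semicircle law of Theorem~\ref{lsc} in two conceptual stages: first convert the Stieltjes transform estimate \eqref{Lambdafinal} into a bound on the empirical counting function, and then invert this to obtain the location of each individual eigenvalue.

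For the first stage, define the counting functions
\begin{equation}
\mathfrak{N}(E) := \#\{j : \lambda_j \le E\}, \qquad N_{sc}(E) := N\int_{-\infty}^E \varrho_{sc}(x)\,\rd x,
\end{equation}
and recall that $\im m(E+i\eta) = \frac{\pi}{N}(\varrho_N \ast P_\eta)(E)$ is the Poisson extension of the empirical density, with an analogous identity for $m_{sc}$. Applying a Helffer--Sjöstrand representation to $\mathbf{1}_{(-\infty,E]}$ smoothed at scale $\eta = N^{-1}(\log N)^{10L}$, and plugging in the bound $|m-m_{sc}| \le (\log N)^{4L}/(N\eta)$ from \eqref{Lambdafinal}, I would like to deduce
\begin{equation}\label{plan-counting}
|\mathfrak{N}(E) - N_{sc}(E)| \;\le\; C(\log N)^{C'L}
\end{equation}
uniformly for $|E|\le 5$, on the high-probability event of Theorem~\ref{lsc}. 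The gain comes from the fact that the Stieltjes transform estimate, integrated over $E'\in[-5,E]$, contributes only $O((\log N)^{-6L})$ after cancellation, while the truncation error at scale $\eta$ contributes $O(\eta N) = O((\log N)^{10L})$ eigenvalues, which is dominated by a power $(\log N)^{C'L}$.

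For the second stage, I invert \eqref{plan-counting}. Since $\mathfrak{N}(\lambda_j) = j$ and $N_{sc}(\gamma_j) = j$ by the definition \eqref{def:gamma}, \eqref{plan-counting} applied at $E=\lambda_j$ yields $|N_{sc}(\lambda_j) - N_{sc}(\gamma_j)| \le C(\log N)^{C'L}$, and hence
\begin{equation}
|\lambda_j - \gamma_j| \;\le\; \frac{C(\log N)^{C'L}}{N\,\varrho_{sc}(\xi_j)}
\end{equation}
for some $\xi_j$ between $\lambda_j$ and $\gamma_j$. The classical location computation gives $2-\gamma_j \sim ((N-j)/N)^{2/3}$ near the right edge and thus $\varrho_{sc}(\gamma_j) \sim \bigl(\min(j,N-j+1)/N\bigr)^{1/3}$, yielding $|\lambda_j - \gamma_j| \lesssim (\log N)^{C'L}\,[\min(j,N-j+1)]^{-1/3} N^{-2/3}$, which matches \eqref{rigidity} after relabelling the exponent $L$. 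A union bound over $j=1,\ldots,N$ and the probability estimate from Theorem~\ref{lsc} complete the proof.

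The main technical obstacle is the first stage, i.e.\ converting Stieltjes transform control into counting function control with only a polylogarithmic loss. Two subtleties arise: one must bootstrap carefully near the spectral edge, where $\varrho_{sc}$ vanishes and the a priori bound $\im m(E+i\eta)$ no longer has a lower bound of order one, so the stronger pointwise diagonal estimate \eqref{Lambdaodfinal} (with its improved $\sqrt{\im m_{sc}/(N\eta)}$ term when $|E|$ is close to $2$) must be used in place of \eqref{Lambdafinal}; and one must also rule out the possibility that $\lambda_j$ lies far outside $[-2,2]$, which likewise follows from \eqref{Lambdaodfinal} by showing that $|G_{jj}(z)|$ remains bounded for $\re z$ outside the spectrum. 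Once these edge issues are handled, the bulk of the argument reduces to careful Fourier/Helffer--Sjöstrand bookkeeping.
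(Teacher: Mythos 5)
Your route coincides with the paper's: Theorem~\ref{lsc} plus the Helffer--Sj\"ostrand representation \eqref{1*} applied to smoothed spectral indicators, first to control the counting function and then, by inverting $N_{sc}$ and using the square-root vanishing of $\varrho_{sc}$ at $\pm 2$, to locate each $\lambda_j$ (the paper only sketches this and defers details to \cite{ERSY}, \cite{EYYrigi}). However, the bookkeeping you give for the counting-function step --- the step you yourself call the crux --- does not close as stated. Integrating the fixed-scale bound $|m-m_{sc}|(E'+i\eta)\le (\log N)^{4L}/(N\eta)$ over $E'\in[-5,E]$ does give $O((\log N)^{-6L})$ for the integral of the Stieltjes transforms, but the eigenvalue count is $N$ times such an integral, i.e.\ of size $O\big(N(\log N)^{-6L}\big)$; since $(\log N)^{L}=\exp\big(A_0(\log\log N)^2\big)$ is sub-polynomial in $N$, this is vastly larger than your $O(N\eta)=O((\log N)^{10L})$ truncation term and nowhere near a polylogarithmic bound on $\mathfrak{N}(E)-N_{sc}(E)$. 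The genuine gain comes from two ingredients absent from your accounting: first, \eqref{Lambdafinal} is uniform over $z\in\bS_L$, so one must use $|m-m_{sc}|(x+iy)\le (\log N)^{4L}/(Ny)$ for \emph{all} $y\in[\eta, O(1)]$, not only at the smoothing scale; second, in the term $N\int\!\!\int y\,f''(x)\chi(y)\,\im(m-m_{sc})(x+iy)\,\rd x\,\rd y$ one must integrate by parts in $x$ and then in $y$ (via the Cauchy--Riemann relation $\partial_x\im(m-m_{sc})=-\partial_y\re(m-m_{sc})$) so that the factor $\int|f''|\sim\eta^{-1}$ is traded for $\int|f'|\sim 1$. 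Only then do the regions $y\ge\eta$ and $y\le\eta$ contribute $O((\log N)^{4L}\log N)$ and $O(N\eta)$ respectively, which is what yields the counting estimate you want.

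The inversion stage also needs a repair at the edge: the mean-value bound $|\lambda_j-\gamma_j|\le C(\log N)^{C'L}/(N\varrho_{sc}(\xi_j))$ is vacuous if $\xi_j$ falls where $\varrho_{sc}$ is tiny or vanishes, e.g.\ if $\lambda_j$ lies slightly outside $[-2,2]$. Consistently with your own remarks, the standard fix is to first bound the extreme eigenvalues by $2+(\log N)^{CL}N^{-2/3}$ using the Green function bounds of Theorem~\ref{lsc} for $E$ outside the spectrum, and then to compare $N_{sc}(\lambda_j)$ with $j=N N_{sc}(\gamma_j)/N$ directly through the $3/2$-power behaviour of $N_{sc}$ near $\pm 2$ rather than through $\varrho_{sc}(\xi_j)$; for the $O((\log N)^{CL})$ extreme indices the counting bound together with the norm bound already gives $|\lambda_j-\gamma_j|\lesssim (\log N)^{CL}N^{-2/3}$. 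With these corrections (and the log-power in \eqref{rigidity} versus your $C'L$ absorbed into the choice of constants), your outline reproduces the argument of \cite{EYYrigi} that the paper invokes.
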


This corollary in particular  proves the a-priori estimate \eqref{assum3} for any $\fa < 1/2$. 

Corollary \ref{7.1} is a simple consequence of the Helffer-Sj\"ostrand formula which 
  translates information on the  Stieltjes
transform of the empirical measure first to the counting function and
then to the locations of eigenvalues. 
The formula yields the representation 
\be
   f(\lambda) =\frac{1}{2\pi}\int_{\bR^2}
\frac{\partial_{\bar z} \wt f(x+iy)}{\lambda-x-iy} \rd x  \rd y 
=\frac{1}{2\pi}\int_{\bR^2}
\frac{iy f''(x)\chi(y) +i(f(x) + iyf'(x))\chi'(y) }{\lambda-x-iy} \rd x 
\rd y
\label{1*}
\ee
for any real valued $C^2$ function $f$
on $\R$, where $\chi(y)$ is any smooth cutoff function with bounded
derivatives and   
 supported in $[-1,1]$ with $\chi(y)=1$ for  $|y|\leq 1/2$. 
In the applications, $f$ will be a smoothed version of the
characteristic functions of spectral intervals so that $\sum_j f(\lambda_j)$
 counts eigenvalues in that interval.
The details of the argument can be found in  \cite{ERSY}.

\medskip

We also mention that Theorem~\ref{lsc} 
immediately implies complete delocalization of
each eigenvector of the Wigner matrix: 

\begin{corollary} [Complete  delocalization]
  Let $u_1, u_2, \ldots $ be the
$\ell^2$-normalized eigenvectors of $H$.
Under the assumptions of Theorem~\ref{lsc}  we have
\be\label{deloc}
\P \Bigg\{  \exists \beta \; : \; \|u_\beta\|_\infty^2 \ge \frac{(\log N)^{10 L}}{N} \Bigg\}
 \le  C\exp{\big[-c(\log N)^{\phi L} \big]}
\ee
for any sufficiently large $N\ge N_0$. 
\end{corollary}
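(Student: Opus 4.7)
The plan is to connect the $\ell^\infty$ norm of an eigenvector to the diagonal entries of the Green function, and then invoke the matrix-element form of the local semicircle law, Theorem~\ref{lsc}(ii), at the smallest allowed scale $\eta$. The starting point is the spectral decomposition: if $\{u_\alpha\}_{\alpha=1}^N$ is an orthonormal eigenbasis of $H$ with eigenvalues $\{\lambda_\alpha\}$ and $u_\alpha(i)$ denotes the $i$-th component of $u_\alpha$, then for $z=E+i\eta$ with $\eta>0$,
\begin{equation}
\im G_{ii}(z) \;=\; \sum_{\alpha=1}^N \frac{\eta\,|u_\alpha(i)|^2}{(\lambda_\alpha-E)^2+\eta^2}.
\end{equation}
Evaluating at $E=\lambda_\beta$ and keeping only the $\alpha=\beta$ term gives the pointwise bound
\begin{equation}
|u_\beta(i)|^2 \;\le\; \eta\,\im G_{ii}(\lambda_\beta+i\eta).
\end{equation}

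Next I would choose $\eta$ to be just above the admissible threshold for $\bS_L$, e.g.\ $\eta:=N^{-1}(\log N)^{10L}$. With this choice, the bound from Theorem~\ref{lsc}(ii) gives, uniformly in $i$ and in $z=E+i\eta$ with $|E|\le 5$,
\begin{equation}
|G_{ii}(z)-m_{sc}(z)| \;\le\; (\log N)^{4L}\sqrt{\frac{\im m_{sc}(z)}{N\eta}} + \frac{(\log N)^{4L}}{N\eta} \;\le\; C,
\end{equation}
except on an event of probability $\le C\exp[-c(\log N)^{\phi L}]$; here I used that $|m_{sc}|$ is bounded on $\bS_L$ and that $N\eta=(\log N)^{10L}\gg (\log N)^{8L}$, so the two error terms are $O(1)$. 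Consequently $\im G_{ii}(\lambda_\beta+i\eta)\le C$ on this high-probability event, and combining with the spectral-decomposition bound,
\begin{equation}
|u_\beta(i)|^2 \;\le\; C\eta \;=\; C\,\frac{(\log N)^{10L}}{N}.
\end{equation}
Taking a union bound over $i$ and $\beta$ costs only a polynomial factor in $N$, which is absorbed by the stretched-exponential tail in the probability estimate.

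The only point requiring care is ensuring that $z=\lambda_\beta+i\eta$ actually lies in $\bS_L$, which reduces to the a priori bound $|\lambda_\beta|\le 5$ for every $\beta$. This follows, on an event of the same overwhelming probability, either directly from the rigidity estimate in Corollary~\ref{7.1} or from an elementary operator-norm bound for Wigner matrices under the subexponential decay assumption \eqref{subexp}. Conditioning on this event and intersecting with the exceptional set of Theorem~\ref{lsc}(ii) yields \eqref{deloc}. I expect no substantive obstacle, since the heavy lifting---the uniform control of $G_{ii}(z)$ down to the optimal scale $\eta\sim N^{-1}(\log N)^{10L}$---has already been done in Theorem~\ref{lsc}; the delocalization statement is essentially a one-line corollary of the spectral identity once that input is available.
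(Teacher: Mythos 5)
Your proposal is correct and follows essentially the same route as the paper: the spectral decomposition $\im G_{ii}(\lambda_\beta+i\eta)=\sum_\alpha \eta|u_\alpha(i)|^2/((\lambda_\alpha-\lambda_\beta)^2+\eta^2)\ge |u_\beta(i)|^2/\eta$, combined with the bound $|G_{ii}(z)|=O(1)$ from \eqref{Lambdaodfinal} at the smallest admissible scale $\eta\sim N^{-1}(\log N)^{10L}$. Your additional remarks on ensuring $\lambda_\beta+i\eta\in\bS_L$ and on the harmless union bound are fine and only make explicit what the paper leaves implicit.
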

For the proof, notice that \eqref{Lambdaodfinal} implies 
 the bound $|G_{jj}(z)| = O(1)$  with very high probability for any $z\in \bS_L$.
Therefore,
$$
   C\ge \im G_{jj}(\la_\al+i\eta)=\sum_\beta
 \frac{\eta |u_\beta(j)|^2}{(\la_\beta-\la_\al)^2+\eta^2} 
\ge \frac{|u_\al(j)|^2}{\eta}.
$$
 The original proof of 
delocalization of eigenvectors was derived from the Stieltjes
transform of the empirical measure \cite{ESY1, ESY3}, motivated by a question posed by T.  Spencer.

\bigskip

{\it Sketch of the  proof  of Theorem~\ref{lsc}.}
For simplicity,
 we will assume here that $E=\re z$ is away from the spectral edges.
The starting point is the following well known formula. 
Let $A$, $B$, $C$ be $n\times n$, $m\times n$ and $m\times m$
matrices and set
\be 
D:=\begin{pmatrix}
    A & B^*  \\
  B& C 
\end{pmatrix}.
\ee
Then for any $1\leq i,j\leq n$, we have 
\be
(D^{-1})_{ij}= \big[(A -B^*C^{-1}B)^{-1}\big]_{ij}.
\label{Dinv}
\ee
Applying this formula to the resolvent matrix $ G=(H-z)^{-1}$,  
we have 
\be
     G_{ii} = 
\frac{1}{ h_{ii} - z- \sum_{k,l\ne i}   h_{ik} G^{(i)}_{kl} h_{li} }  
=\frac{1}{h_{ii}- z - \E_i \sum_{k,l\ne i}  h_{ik} G^{(i)}_{kl} h_{li} - Z_i},
\label{1row}
\ee
where
\be\label{Zdefold}
  Z_i: =  \sum_{k,l\ne i}  h_{ik} G^{(i)}_{kl} h_{li}- 
\sum_{k,l\ne i} \E_i h_{ik} G^{(i)}_{kl} h_{li}.
\ee
Here  $G^{(i)}$ denotes  the resolvent of the $(N-1)\times (N-1)$ minor of $H$
after removing the $i$-th row and column and $\E_i$ denotes the expectation
with respect to the entries in the $i$-th row and column.
Since $G^{(i)}$ is independent of $h_{ik}$ 
and $\E_i h_{ik}h_{li} = \frac{1}{N} \delta_{kl}$, we have
$$
 \sum_{k,l\ne i} \E_i h_{ik} G^{(i)}_{kl} h_{li} = \frac{1}{N}\sum_{k\ne i} G^{(i)}_{kk} =
  \frac{1}{N}\sum_{k} G_{kk} + O\Big(\frac{1}{N}\Big).
$$
Here we used  the interlacing property of eigenvalues between a
 matrix and its minors, which implies that
\be
   \Big| \frac{1}{N} \tr G - \frac{1}{N} \tr G^{(i)}\Big|=
 |m(z)-m^{(i)}(z)|\le \frac{C}{N\eta}, \qquad \eta = \im z>0.
\label{mm}\ee
Defining  $v_i: = G_{ii} - m_{sc}$, we thus have
\be\label{1}
  v_i = G_{ii} - m_{sc}
= \frac{1}{-z- m_{sc}- \Big(\frac{1}{N} \sum_{j} v_j+Z_i -h_{ii} + O( N^{-1})\Big)}
  - m_{sc} .
\ee
Expanding the denominator, using the identity $m_{sc}(z)+ [ m_{sc}(z)+z]^{-1}=0$
 and neglecting the error terms $h_{ii}+ O(N^{-1})= O( N^{-1/2})$, 
we have 
\be
v_i = m^2_{sc} \Big(\frac{1}{N} \sum_{j} v_j + Z_i  \Big)+m^3_{sc} \Big( 
\frac{1}{N} \sum_{j} v_j+  Z_i  \Big)^2+ \ldots 
\ee
Summing  up $i$ and dividing by $N$, we obtain,  modulo negligible errors, 
\be\label{66}
 \barv : = \frac{1}{N} \sum_j v_j 
\approx m^2_{sc}\barv + m_{sc}^3\barv^2+
 m^2_{sc}\barZ   +  O \left (   \frac{1}{N}   \sum_i |Z_i|^2  \right ), \quad 
 \barZ: = \frac{1}{N} \sum_j Z_j.
\ee

To estimate $Z_i$, we 
compute its second moment
\be
 \E |Z_i|^2 = \E\sum_{k,l\ne i}\sum_{k',l'\ne i} \E_i \Bigg( \Big[  h_{ik} G^{(i)}_{kl} h_{li} - 
  \E_i  h_{ik} G^{(i)}_{kl} h_{li}\Big] \Big[  \ov h_{ik'} \ov G^{(i)}_{k'l'} \ov h_{l' i} - 
  \E_i \ov  h_{ik'} \ov G^{(i)}_{k'l'} \ov h_{l' i }\Big]\Bigg).
\label{Zib}
\ee
Since $\E h=0$, the non-zero contributions to this sum come from
index combinations when all $h$ and $\ov h$ are paired.
For pedagogical simplicity, assume that $\E h^2=0$, this can be achieved, for example, if
the distribution of the real and imaginary parts are the same. Then
the $h$ factors in the above
expression have to be paired in such a way that $h_{ik}=h_{ik'}$ and $h_{il}=h_{il'}$,
i.e., $k=k'$, $l=l'$. Note that pairing $h_{ik}=h_{il}$  would give zero because
the expectation is subtracted. The result is
\be
   \E_i |Z_i|^2 = \frac{1}{N^2}\sum_{k,l\ne i} |G^{(i)}_{kl}|^2  + \frac{m_4-1}{N^2} \sum_{k\ne i}
  |G_{kk}^{(i)}|^2,
\label{ZZ}
\ee
where $m_4 =\E |\sqrt{N}h|^4$ is the fourth moment of the single entry
distribution. 
The first term can be computed
\be
    \frac{1}{N^2}\sum_{k,l\ne i} |G^{(i)}_{kl}|^2 =  \frac{1}{N^2}\sum_{k\ne i} 
 (|G^{(i)}|^2)_{kk} 
= \frac{1}{N\eta} \frac{1}{N}
  \sum_k \im G^{(i)}_{kk}  : =   \frac{1}{N\eta} \im m^{(i)}.
\label{egy}
\ee
The second term in \eqref{ZZ}   can be estimated by a similar bound. 
These estimates   confirm that the size of $Z_i$, at least in the second moment sense, is roughly
\be
  |Z_i|\lesssim  \frac{C }{\sqrt{N\eta}}.
\label{Zest}
\ee
Neglecting the $\barv^2$ term in \eqref{66} and using that $|1-m_{sc}^2|\ge c$ 
away from the spectral edge for
 some positive $c$,
we thus  have $ |m(z)-m_{sc}(z)|\lesssim C(N\eta)^{-1/2}$.
A similar but more involved argument gives the same bound
for individual $v_i$'s, showing the estimate \eqref{Lambdaodfinal}
for the diagonal elements $G_{ii}$.
 The estimate 
for the off-diagonal terms, $G_{ij}$, $i\ne j$, is obtained
from the identity $G_{ij}= G_{jj}G_{ii}^{(j)}\big[ Z_{ij}- h_{ij}\big]$
 which can be proved using \eqref{Dinv}.
Here $Z_{ij}$ is defined analogously to \eqref{Zdefold} as
$$
    Z_{ij}: =  \sum_{k,l\ne i,j}  h_{ik} G^{(ij)}_{kl} h_{lj}- 
\sum_{k,l\ne i,j} \E_{ij} h_{ik} G^{(ij)}_{kl} h_{lj},
$$
where $G^{(ij)}$ is the resolvent of the $(N-2)\times (N-2)$ minor of $H$
after removing the $i$-th and $j$-th row and column. The
bound \eqref{Zest} holds for  $Z_{ij}$ as well.

 The estimate for  $\barv = m-m_{sc}$,
the average of $v_i$'s, is of order $(N\eta)^{-1}$
 in \eqref{Lambdafinal}, i.e. it is  better
than the  $(N\eta)^{-1/2}$ estimate for the individual matrix elements
in \eqref{Lambdaodfinal}.
The key mechanism for this improvement
is  the cancellation of the $Z_j$'s 
in their average $\barZ$.   
If $Z_j$'s were independent, we
would gain a factor $N^{-1/2}$ by the central limit theorem.
But $Z_j$'s are correlated and the cancellation takes the following form:

\begin{lemma} [Fluctuation Averaging Lemma] \label{cancel} With the notations of Theorem~\ref{lsc},
for any $\e>0$ we have
\be
\P\left(\frac1N\left|\sum_{i=1}^NZ_i\right|\geq \frac{N^\e}{N\eta }\right)\leq
 C\exp{\big[-c(\log N)^{\phi L} \big]} 
\label{OPU}
\ee
for sufficiently large $N$. 
\end{lemma}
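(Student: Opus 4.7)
My plan is to apply the high-moment method: I bound $\E\bigl|\sum_i Z_i\bigr|^{2p}$ for $p$ taken as a power of $\log N$, and then conclude via Markov's inequality (choosing $p$ large converts a polynomial tolerance $N^\e$ into the required near-sure tail $\exp[-c(\log N)^{\phi L}]$). The fundamental input is the centering identity $\E_i Z_i=0$, where $\E_i$ denotes conditioning on all matrix entries \emph{outside} the $i$-th row/column. Expanding,
\[
\E\Bigl|\sum_{i=1}^N Z_i\Bigr|^{2p}
\;=\; \sum_{i_1,\dots,i_{2p}=1}^N \E\!\left[\prod_{s=1}^{p} Z_{i_s}\prod_{s=p+1}^{2p}\overline{Z_{i_s}}\right],
\]
the aim is to force every index label to appear at least twice among $(i_1,\dots,i_{2p})$, thereby replacing the naive count $N^{2p}$ by only $N^p$.

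The key combinatorial mechanism is this. Suppose some index $i_s$ occurs only once in the tuple. If the remaining factors were independent of row/column $i_s$, then applying $\E_{i_s}$ to the product would annihilate it via $\E_{i_s} Z_{i_s}=0$. The obstruction is that each $Z_{i_t}$ with $t\ne s$ still depends on row/column $i_s$ through the minor resolvent $G^{(i_t)}$. To remove this dependence I would iteratively apply the minor expansion
\[
G^{(\mathbb T)}_{kl} \;=\; G^{(\mathbb T\cup\{i_s\})}_{kl} + \frac{G^{(\mathbb T)}_{k\,i_s}\,G^{(\mathbb T)}_{i_s\,l}}{G^{(\mathbb T)}_{i_s\,i_s}}
\]
to every minor resolvent in the product, replacing $G^{(i_t)}$ by $G^{(i_t,i_s)}$ plus a ``leakage'' term. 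The principal contribution after this peeling is independent of row/column $i_s$ and is killed by $\E_{i_s}$. Each leakage term comes with at least one extra off-diagonal factor $G_{\cdot\,i_s}$, whose entrywise size from \eqref{Lambdaodfinal} is $\sqrt{\im m_{sc}/(N\eta)}$, and with a denominator $G_{i_s i_s}\approx m_{sc}$ bounded below, providing the small factor needed for absorption.

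Once this fluctuation cancellation is implemented for every index that would otherwise appear only once, the surviving tuples contain at most $p$ distinct labels. Combining the high-moment bound $\E|Z_i|^{2q}\le (Cq)^{q}(N\eta)^{-q}$, which follows from a Hanson--Wright-style inequality using the subexponential tail \eqref{subexp} together with the deterministic control on $G^{(i)}$ from Theorem \ref{lsc}, with the pairing count $(2p)!/(2^p p!)\le (Cp)^p$ and the index count $N^p$, I would obtain
\[
\E\Bigl|\tfrac1N\sum_{i=1}^N Z_i\Bigr|^{2p} \;\lesssim\; (Cp)^{p}\,N^{p}\cdot N^{-2p}\cdot (N\eta)^{-p}
\;=\; \biggl(\frac{Cp}{N\,(N\eta)}\biggr)^{p}.
\]
Applying Markov's inequality with threshold $N^\e/(N\eta)$ and choosing $p$ proportional to $(\log N)^{\phi L}$ gives \eqref{OPU}, after using $\eta\le 1$ in the regime of interest to bound $(N\eta)^p\le N^p$ where needed.

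The principal obstacle is not the final estimate but the systematic bookkeeping of the multi-resolvent expansion: peeling a single $i_s$ at a time creates a branching tree of leakage terms, and one must verify that \emph{every} surviving term (those that escape $\E_{i_s}$ because an off-diagonal $G_{\cdot\,i_s}$ factor was extracted) carries enough off-diagonal smallness to offset the combinatorial proliferation. Organizing these terms with a graphical or forest structure, uniformly across all $2p$ indices and uniformly over the whole spectral parameter range $\bS_L$, is the delicate technical core of the Fluctuation Averaging Lemma and is precisely what the argument of \cite{EYY2} codifies.
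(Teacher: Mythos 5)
Your overall route --- bounding $\E\bigl|\sum_i Z_i\bigr|^{2p}$ with $p$ a power of $\log N$, exploiting the centering $\E_i Z_i=0$, peeling the dependence on a lone index via the minor expansion, and finishing with Markov --- is exactly the mechanism behind the proof this review refers to (the lemma is stated here without proof, citing \cite{EYY2}, see also \cite{EYYrigi}), so the strategy is the right one. The genuine gap is in your combinatorial conclusion. The peeling does \emph{not} annihilate the terms in which some label occurs only once: after you split each $G^{(i_t)}$ into a part independent of row/column $i_s$ plus a leakage term, only the fully independent part is killed by $\E_{i_s}$, while every leakage term survives, carrying two extra off-diagonal resolvent entries of size roughly $(N\eta)^{-1/2}$. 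Hence it is false that ``the surviving tuples contain at most $p$ distinct labels''; a tuple with $\ell$ lone labels survives with an extra smallness of order $\bigl[(N\eta)^{-1/2}\bigr]^{\ell}$ (not zero), and for $\eta$ close to $1/N$ the dominant contribution to the $2p$-th moment comes precisely from tuples in which \emph{all} $2p$ labels are distinct. Consequently your final bound $\bigl(Cp/(N\cdot N\eta)\bigr)^{p}$ is not what the expansion yields, and it is in fact false for small $\eta$: it would assert the full CLT-type gain $N^{-1/2}$, i.e.\ $\bigl|\frac1N\sum_i Z_i\bigr|\lesssim \sqrt{\eta}\,(N\eta)^{-1}$, whereas through \eqref{66} this would force $m-m_{sc}$ to fluctuate well below its true scale $\asymp (N\eta)^{-1}$ (up to logarithms). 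The sentence in the paper immediately preceding the lemma (``If $Z_j$'s were independent, we would gain a factor $N^{-1/2}$\dots But $Z_j$'s are correlated'') is exactly the warning that this gain is not available.

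The lemma is still reachable by your method once the bookkeeping is corrected: charging each lone label an extra off-diagonal factor and each repeated label the usual $N^{-1/2}$ from the reduced index count gives a bound of the form
\begin{equation*}
\E\Bigl|\frac1N\sum_{i=1}^N Z_i\Bigr|^{2p}\;\le\; (Cp)^{Cp}\Bigl[\frac{1}{N\eta}+\Bigl(\frac{p}{N\cdot N\eta}\Bigr)^{1/2}\Bigr]^{2p},
\end{equation*}
and since $(N\eta)^{-1}\ge (N\cdot N\eta)^{-1/2}$ for $\eta\le 1$, Markov with threshold $N^{\e}/(N\eta)$ and $p\sim(\log N)^{\phi L}$ still yields \eqref{OPU}. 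Two smaller points you should also address in a complete argument: on the exceptional event where \eqref{Lambdaodfinal} fails you must use the deterministic bound $|G_{ij}|\le \eta^{-1}$ together with the superpolynomially small probability (the Green function bounds are high-probability, not deterministic, statements); and since \eqref{Lambdaodfinal} is itself part of Theorem \ref{lsc}, the lemma has to be run inside the self-consistent/bootstrap scheme of \cite{EYY2, EYYrigi} rather than quoting the full theorem as an external input.
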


Using this lemma and \eqref{66}, we have proved the
stronger estimate for $\barv$. This completes the sketch of
the proof of the
  local semicircle law,  Theorem \ref{lsc}.\qed

\section{The Green function comparison theorems}\label{sec:4mom}

We now state the Green function comparison theorem, Theorem~\ref{comparison}.
It will  quickly lead to Theorem~\ref{com} stating that 
the correlation functions  of eigenvalues of two matrix ensembles 
are identical  on a scale smaller than $1/N$  provided that the first four moments 
of all  matrix elements  of these two ensembles are almost the same.  We will state 
a limited version  for real Wigner matrices for simplicity of  presentation.

\begin{theorem}[Green function comparison]\label{comparison}\cite[Theorem 2.3]{EYY}  
 Suppose that we have  
two  $N\times N$ Wigner matrices, 
$H^{(v)}$ 
and $H^{(w)}$, with matrix elements $h_{ij}$
given by the random variables $N^{-1/2} v_{ij}$ and 
$N^{-1/2} w_{ij}$, respectively, with $v_{ij}$ and $w_{ij}$ satisfying
the uniform subexponential decay condition \eqref{subexp}.
We assume that the first four moments of
  $v_{ij}$ and $w_{ij}$ are  close to each other in the sense that 
\be\label{4}
    \big | \E  v_{ij}^s  -  \E  w_{ij}^s \big | \le N^{-\delta -2+ s/2},
  \qquad 1\le s\le 4,
\ee
holds for some $\delta > 0$. 
Then  there are positive constants $C_1$ and $\e$,
depending on $\ttau$ and $C_0$ from  \eqref{subexp} such that for any $\eta$ with
$N^{-1-\e}\le \eta\le N^{-1}$  and for any $z_1, z_2$ with $\im z_j = \pm \eta$, $j=1,2$,
we have 
\begin{align}\label{maincomp}
\lim_{N \to \infty} \Big [ \E \tr  G^{(v)}(z_1)  \tr G^{(v)}(z_2)    - \E \tr  G^{(w)}(z_1)  \tr G^{(w)}(z_2)\Big ] 
= 0,
\end{align}
 where $G^{(v)}$ and $G^{(w)}$ denotes the Green functions of $H^{(v)}$ and $H^{(w)}$.
\end{theorem}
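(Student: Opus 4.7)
The plan is to prove Theorem \ref{comparison} by a Lindeberg-type replacement argument combined with a resolvent expansion, swapping matrix entries one at a time and estimating the change in $\E[\tr G(z_1)\tr G(z_2)]$ at each step. Enumerate the independent matrix entries as $\{(i_a,j_a):a=1,\dots,\gamma(N)\}$ with $\gamma(N)\sim N^2$, and define Wigner matrices $H_0=H^{(w)}, H_1,\dots,H_{\gamma(N)}=H^{(v)}$ so that $H_\gamma$ and $H_{\gamma-1}$ agree outside the entry at position $(i_\gamma,j_\gamma)$, where they take the values $v_{i_\gamma j_\gamma}/\sqrt N$ and $w_{i_\gamma j_\gamma}/\sqrt N$ respectively. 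Setting $F(H):=\tr G^H(z_1)\,\tr G^H(z_2)$, the telescoping identity
\begin{equation*}
\E F(H^{(v)})-\E F(H^{(w)}) \;=\; \sum_{\gamma=1}^{\gamma(N)}\bigl(\E F(H_\gamma)-\E F(H_{\gamma-1})\bigr)
\end{equation*}
reduces the problem to estimating each single-entry replacement by $o(N^{-2})$.

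For each $\gamma$, let $H^\circ_\gamma$ denote the matrix obtained from $H_\gamma$ (equivalently, from $H_{\gamma-1}$) by zeroing the entry at $(i_\gamma,j_\gamma)$, and let $G^\circ = (H^\circ_\gamma - z)^{-1}$. Applying the resolvent identity $G=G^\circ+\sum_{k=1}^{4}(-1)^k(G^\circ V)^k G^\circ+R_5$, with $V=H-H^\circ_\gamma$ of rank at most two (by the symmetry constraint $h_{ij}=\bar h_{ji}$), yields a Taylor expansion of $F(H_\gamma)$ to fourth order in the scalar variable $v_{i_\gamma j_\gamma}/\sqrt N$, plus a fifth-order remainder. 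Taking expectation and exploiting the independence of $v_{i_\gamma j_\gamma}$ from $H^\circ_\gamma$,
\begin{equation*}
\E F(H_\gamma)-\E F(H_{\gamma-1}) \;=\; \sum_{s=1}^{4}\frac{\E v_{i_\gamma j_\gamma}^s-\E w_{i_\gamma j_\gamma}^s}{s!\,N^{s/2}}\,\E\bigl[F^{(s)}(H^\circ_\gamma)\bigr]+\E R_5^{(v)}-\E R_5^{(w)},
\end{equation*}
where $F^{(s)}$ is the $s$-th partial derivative of $F$ in the direction of the $(i_\gamma,j_\gamma)$ entry. The terms $s=1,2$ vanish because the first two moments of the Wigner entries match exactly (mean zero, unit variance). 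For $s=3,4$, the hypothesis \eqref{4} gives $|\E v^s-\E w^s|/N^{s/2}\le N^{-\delta-2}$, so each such term contributes at most $N^{-\delta-2}\,|\E F^{(s)}(H^\circ_\gamma)|$.

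The final step is to bound $|\E F^{(s)}(H^\circ_\gamma)|$ for $s=3,4$ and the fifth-order remainder. Differentiation of Green function entries follows the rule $\partial_{h_{ij}}G_{kl}=-G_{ki}G_{jl}-G_{kj}G_{il}$, which when applied to $\tr G$ produces $-2(G^2)_{ji}$. Recursive application expresses $F^{(s)}$ as a polynomial in the entries $G^\circ_{ab}$ with $a,b\in\{i_\gamma,j_\gamma\}$ and in factors of the form $\bigl((G^\circ)^r\bigr)_{ab}$ coming from differentiating traces. On the very-high-probability event of Theorem \ref{lsc}, the pointwise bound $|G^\circ_{ab}(z)|\lesssim(\log N)^{4L}$ holds uniformly for $z\in {\bf S}_L$; factors like $(G^\circ)^2_{ab}$ are then controlled via the Cauchy representation $(G^\circ)^2_{ab}=\partial_z G^\circ_{ab}$ on a contour of radius $\eta/2$, or via the Ward identity $|(G^\circ)^2_{ji}|^2\le\eta^{-2}\,\im G^\circ_{ii}\,\im G^\circ_{jj}$. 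The fifth-order remainder is bounded by $N^{-5/2}\E[|v|^5+|w|^5]\sup|F^{(5)}|$, with the subexponential decay \eqref{subexp} controlling the moments and the local semicircle law controlling the derivative. Summing all contributions over the $\gamma(N)\sim N^2$ swaps yields an error tending to zero as $N\to\infty$.

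The main obstacle is a delicate matter of bookkeeping in the polynomial bounds on $F^{(s)}$: since $F$ is of size $\sim N^2$ and each differentiation of a trace introduces a $(G^2)_{ji}$-factor of order $\eta^{-1}\sim N$ at the scale $\eta\in[N^{-1-\e},N^{-1}]$, naive pointwise bounds on $|F^{(s)}|$ exceed what the telescoping argument can absorb by polynomial factors in $N$. Overcoming this requires \emph{(i)} the exact cancellation of the $s=1,2$ contributions afforded by the matching of means and variances, \emph{(ii)} the quantitative smallness $N^{-\delta-2+s/2}$ of the $s=3,4$ moment discrepancies, which is precisely tuned to the polynomial growth of the derivatives, and \emph{(iii)} the use of expectation-level rather than pointwise bounds wherever possible---the essential gain being that $\E (G^2)_{ji}$ is substantially smaller than the pointwise $|(G^2)_{ji}|$, reflecting the same fluctuation-averaging mechanism encoded in Lemma \ref{cancel} and the rigidity estimates of Corollary \ref{7.1}.
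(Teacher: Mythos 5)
Your strategy is the same as the paper's: a Lindeberg-type entry-by-entry swap, telescoping over the $\gamma(N)\sim N^2$ independent entries, a resolvent (Taylor) expansion around the matrix with the swapped entry zeroed out, exact or near cancellation of the orders $s\le 4$ via the moment conditions \eqref{4}, bounds on all Green function factors from the local semicircle law, and a fifth-order remainder controlled by \eqref{subexp}. Up to that point your proposal and Section~\ref{sec:4mom} coincide.

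The genuine gap is in the quantitative step that you yourself single out as the ``main obstacle'' and then do not actually overcome. For the unnormalized observable $F=\tr G(z_1)\tr G(z_2)$ the high-probability bounds $|G_{ab}|\le N^{C\e}$ and $|(G^2)_{ab}|\le \eta^{-1}(\im G_{aa}\,\im G_{bb})^{1/2}\le N^{1+C\e}$ only give $|\partial^s F|\lesssim N^{2+C\e}$, so each swap contributes $N^{-\delta-2}\cdot N^{2+C\e}$ from the $s=3,4$ terms and the sum over $N^2$ swaps is $N^{2-\delta+C\e}$, which does not vanish. Your proposed cure (iii) --- that $\E (G^2)_{ji}$ is much smaller than its pointwise size, by the mechanism of Lemma~\ref{cancel} --- is not substantiated and is not what the proof uses: the relevant terms are expectations of products of strongly correlated factors such as $(G_1^2)_{ij}G_{1,ii}G_{1,jj}\tr G_2$, so smallness of $\E(G^2)_{ij}$ alone would not help, and no fluctuation averaging enters the Green function comparison at all. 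What closes the argument in \cite{EYY} (and what the sketch in Section~\ref{sec:4mom} tacitly encodes by writing $\frac1N\tr (H-z)^{-1}$) is the $1/N$ normalization: the theorem is really about smooth functions, with polynomially bounded derivatives, of quantities like $\frac 1N \tr G(z_j)$ and $\frac1N\tr$ of products of resolvents. Since $\partial_{ij}\frac1N\tr G=-\frac 2N (G^2)_{ij}$ and every further differentiation keeps exactly one $(G^2)$ factor, the Ward-identity size $\eta^{-1}\le N^{1+\e}$ is exactly offset by the $1/N$, so all derivatives up to fifth order are $O(N^{C\e})$; each swap then costs $N^{-2-\delta+C\e}$ (plus $N^{-5/2+C\e}$ from the remainder), and the total is $N^{-\delta+C\e}+N^{-1/2+C\e}\to 0$. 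This normalized statement is all that the application to Theorem~\ref{com} requires, since the correlation functions only need $\E[\tr G(z_1)\tr G(z_2)]$ to precision $o(N^2)$. Two further small points: Theorem~\ref{lsc} is stated only for $\eta\ge N^{-1}(\log N)^{10L}$, so your $z$ with $N^{-1-\e}\le\eta\le N^{-1}$ lies outside ${\bf S}_L$ and you must first extend the bounds by the monotonicity of $y\mapsto y\,\im G_{aa}(E+iy)$, at the price of an extra $N^{C\e}$; and the exact vanishing of the $s=1,2$ terms is not needed (nor available for the complex second moment), since \eqref{4} for $s=1,2$ already yields contributions of size $N^{-\delta-2}$ per swap.
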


The 
matching condition \eqref{4}  is essentially the same as the one appeared in \cite{TV}. 
Here we formulated Theorem \ref{comparison} for a product of two
traces of  the Green function, but  the result 
holds   for  a large class of
smooth functions  depending on several individual
matrix elements of the  Green functions as well,
 see \cite{EYY}  for the precise statement.  (The 
matching condition \eqref{4}  is slightly weaker than in \cite{EYY}, 
but the proof in \cite{EYY} without any change  yields this slightly stronger version.) 
This general version of
 Theorem \ref{comparison} implies the correlation functions 
  of the two ensembles at the scale $1/N$ are identical:

\begin{theorem}[Correlation function comparison]\label{com} \cite[Theorem 6.4]{EYY}
Suppose the assumptions of Theorem \ref{comparison} hold. 
Let $p_{v, N}^{(n)}$ and $p_{w, N}^{(n)}$
be the  $n-$point functions of the eigenvalues w.r.t. the probability law of the matrix $H^{(v)}$
and $H^{(w)}$, respectively. 
Then for any $|E| < 2$,  any
$n\ge 1$ and  any compactly supported continuous test function
$O:\bR^n\to \bR$ we have   
\be \label{6.3}
\lim_{N\to\infty}\int_{\R^n}  \rd\alpha_1 
\ldots \rd\alpha_n \; O(\alpha_1,\ldots,\alpha_n) 
   \Big ( p_{v, N}^{(n)}  - p_{w, N} ^{(n)} \Big )
  \Big (E+\frac{\alpha_1}{N}, 
\ldots, E+\frac{\alpha_n}{N }\Big) =0.
\ee
\end{theorem}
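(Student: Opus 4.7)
The plan is to reduce the comparison of $n$-point correlation functions to the comparison of products of traces of Green functions at spectral parameters whose imaginary part $\eta$ is slightly smaller than $1/N$, to which the (multi-trace version of the) Green function comparison Theorem~\ref{comparison} applies. The bridge between the two sides is the Poisson-kernel identity
\[
\frac{1}{\pi}\,\im\,\tr G(x+i\eta) \;=\; \sum_{j=1}^N \theta_\eta(x-\lambda_j),
\qquad \theta_\eta(y):=\frac{1}{\pi}\,\frac{\eta}{y^2+\eta^2},
\]
i.e.\ the empirical density smoothed on scale $\eta$. I take $\eta := N^{-1-\e}$ with the $\e$ produced by Theorem~\ref{comparison}.

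First, by the definition \eqref{pk}, the left side of \eqref{6.3} equals $N^{-n}\,\E\!\sum^{*}_{i_1,\dots,i_n} O\bigl(N(\lambda_{i_1}-E),\dots,N(\lambda_{i_n}-E)\bigr)$, where $\sum^{*}$ runs over ordered tuples of pairwise distinct indices. By the rigidity estimate of Corollary~\ref{7.1}, only $O(N^\xi)$ eigenvalues near $E$ contribute to this sum for any $\xi>0$, so the unrestricted sum differs from $\sum^{*}$ by $O(N^{n-1+\xi})$ terms, which contributes $o(1)$ after the $N^{-n}$ normalization. Hence I may replace $\sum^{*}$ by the unrestricted $n$-fold sum.

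Second, for each ordered tuple I approximate each eigenvalue point mass by a Poisson kernel on scale $\eta$: writing $O(N\,\cdot)$ as a test function of width $N^{-1}\gg \eta$, the error incurred by replacing $\delta_{\lambda_{i_k}-E}$ by $\theta_\eta(\,\cdot-(\lambda_{i_k}-E))$ in each of the $n$ factors is of order $N\eta=N^{-\e}$ per factor, using the smoothness of $O$ together with Theorem~\ref{lsc} to discard the polynomial tails of $\theta_\eta$ at distance $\gg \eta$ from $\lambda_{i_k}$. After summing over the indices, the bridging identity converts the unrestricted sum of products of $\theta_\eta$'s into a polynomial of degree $n$ in quantities $\tr G^{(v)}(E+\alpha_k/N\pm i\eta)$. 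Applying the general form of Theorem~\ref{comparison} (whose proof is a Lindeberg swap replacing matrix elements one at a time and Taylor-expanding to fourth order, the first four moments matching via \eqref{4} killing the leading terms; the argument extends verbatim from two to any fixed number of trace factors), the difference between the $v$- and $w$-expectations tends to zero uniformly for $(\alpha_1,\dots,\alpha_n)$ in the compact support of $O$. Integration in $\alpha$ and combining the error estimates from the previous steps yields \eqref{6.3}.

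The main obstacle is the $n$-fold resolution issue: the Poisson kernel at scale $\eta\ll 1/N$ has height $\sim N^{1+\e}$, so the diagonal contributions in which the same eigenvalue appears in two or more factors are enormous and would swamp the genuine $n$-point signal if not isolated. This is precisely why the restriction to distinct indices is imposed in the first step, and why the reorganization must mirror the inclusion–exclusion relation between correlation functions and smoothed density products. Once those diagonal pieces are peeled off and controlled by rigidity, what remains is genuinely a multi-trace quantity and the Green function comparison delivers the conclusion; the rest is bookkeeping of the smoothing errors, all of which are polynomially small in $N$.
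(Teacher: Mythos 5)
Your overall route --- smoothing each eigenvalue point mass by the Poisson kernel at scale $\eta=N^{-1-\e}$, converting the resulting products into products of $\im\tr G$, and invoking the multi-observable form of Theorem~\ref{comparison} --- is the same strategy as the proof in \cite{EYY}. But there is a genuine gap at your first step, and it propagates. With the definition \eqref{pk} one has
\[
\int O(\alpha)\,p^{(n)}_N\Bigl(E+\tfrac{\alpha_1}{N},\ldots,E+\tfrac{\alpha_n}{N}\Bigr)\rd\alpha
\;=\;\frac{N^n(N-n)!}{N!}\,\E\sum_{i_1\ne\cdots\ne i_n}O\bigl(N(\lambda_{i_1}-E),\ldots,N(\lambda_{i_n}-E)\bigr),
\]
and the prefactor tends to $1$, not to $N^{-n}$: the left side of \eqref{6.3} is asymptotically the distinct-index sum itself, an $O(1)$ quantity. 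Your assertion that the coincidence terms are $o(1)$ ``after the $N^{-n}$ normalization'' therefore rests on a normalization that is off by a factor $N^{n}$. With the correct normalization the diagonal terms are of the \emph{same order} as the signal: for $n=2$, the difference between the unrestricted and the distinct sums is $\E\sum_i O\bigl(N(\lambda_i-E),N(\lambda_i-E)\bigr)$, which converges to $\varrho_{sc}(E)\int O(a,a)\,\rd a\neq 0$ in general. Rigidity (Corollary~\ref{7.1}) only bounds the number of eigenvalues in the $O(1/N)$ window by $N^{\xi}$; it provides no smallness for these terms and, since it says nothing about coincidences below scale $1/N$, it cannot be used to ``peel off'' the diagonal. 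So the passage from the distinct-index sum to the unrestricted sum (equivalently to $\prod_k\im\tr G$) is precisely where the content of the theorem lies, and your argument discards it.

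What is actually needed is to keep the inclusion--exclusion over index coincidences. The unrestricted product of traces decomposes into the all-distinct part (the smoothed $n$-point function you want) plus cluster terms in which a block $B$ of coinciding indices produces $\sum_i\prod_{k\in B}\theta_\eta(\lambda_i-E-\alpha_k/N)=\pi^{-|B|}\tr\prod_{k\in B}\im G\bigl(E+\tfrac{\alpha_k}{N}+i\eta\bigr)$, i.e.\ a trace of a product of resolvents. These terms neither vanish nor cancel by themselves; they must \emph{also} be compared between $H^{(v)}$ and $H^{(w)}$, which is possible because the general form of Theorem~\ref{comparison} applies to smooth functions of traces (and matrix elements) of products of Green functions; one then concludes for the all-distinct part by subtraction, or equivalently by induction on $n$. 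Your smoothing step (error $O(N\eta)$ per factor, Poisson tails controlled via Theorem~\ref{lsc}) is fine and matches the standard argument; the missing piece is a genuine treatment of the coincidence terms, which ``rigidity plus counting'' cannot supply.
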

The basic idea for proving Theorem \ref{comparison} is
similar to Lindeberg's proof of the central limit theorem, where
the random variables are replaced one by one with a Gaussian one.
We will replace  the matrix elements $v_{ij}$ with $w_{ij}$ one by one and
 estimate the effect of this change on the resolvent 
by a resolvent expansion. The idea of applying Lindeberg's method in random matrices 
was recently used  by Chatterjee \cite{Ch}  for comparing  the traces of the Green functions; 
the idea was also used by Tao and Vu \cite{TV}
 in the context of comparing individual eigenvalue 
distributions.   There are two main differences between our method and the one 
that appeared in \cite{TV}:
\begin{itemize}
\item[(i)]  We compare the statistics of eigenvalues  of two different ensembles near fixed energies while  
\cite{TV} compared the statistics of the
$j_1, j_2, \ldots j_k$-th  eigenvalues for fixed labels $j_1, j_2, \ldots j_k$.
\item[(ii)]  There is a serious difficulty 
in the approach \cite{TV} concerning 
 possible resonances of neighboring eigenvalues that may render the expansion unstable.
 \end{itemize}
The Green function method eliminates this  difficulty   completely and
Theorem~\ref{comparison} is a simple corollary 
of the Green function estimate Theorem  \ref{lsc}.

\medskip

For a sketch of the proof,
fix a bijective ordering map on the index set of
the independent matrix elements,
\[
\phi: \{(i, j): 1\le i\le  j \le N \} \to \Big\{1, \ldots, \gamma(N)\Big\} , 
\qquad \gamma(N): =\frac{N(N+1)}{2},
\] 
and denote by  $H_\gamma$  the  Wigner matrix whose matrix 
elements $h_{ij}$ follow
the $v$-distribution if $\phi(i,j)\le \gamma$ and they follow the $w$-distribution
otherwise; in particular $H^{(v)}= H_0$ and $H^{(w)}= H_{\gamma(N)}$. 

Consider the telescopic sum of 
differences of expectations (we present only  one resolvent for simplicity of the presentation):
\begin{align}\label{tel}
\E \, \left ( \frac{1}{N}\tr  \frac 1 {H^{(w)}-z} \right )   - 
 & \E \,   \left  (  \frac{1}{N}\tr  \frac  1 {H^{(v)}-z} 
\right )  \\
= & \sum_{\gamma=1}^{\gamma(N)}\left[  \E \, 
 \left (  \frac{1}{N}\tr \frac 1 { H_\gamma-z} \right ) 
-  \E \,  \left (  \frac{1}{N}\tr \frac  1 { H_{\gamma-1}-z} \right ) \right] . \non
\end{align}
Let $E^{(ij)}$ denote the matrix whose matrix elements are zero everywhere except
at the $(i,j)$ position, where it is 1, i.e.,  $E^{(ij)}_{k\ell}=\delta_{ik}\delta_{j\ell}$.
Fix a $\gamma\ge 1$ and let $(i,j)$ be determined by  $\phi (i, j) = \gamma$.
We will compare $H_{\gamma-1}$ with $H_\gamma$.
Note that these two matrices differ only in the $(i,j)$ and $(j,i)$ matrix elements 
and they can be written as
$$
    H_{\gamma-1} = Q + \frac{1}{\sqrt{N}}V, \qquad V:= v_{ij}E^{(ij)}
+ v_{ji}  E^{(ji)}, \qquad v_{ji}:= \ov v_{ij},
$$
$$
    H_\gamma = Q + \frac{1}{\sqrt{N}} W, \qquad W:= w_{ij}E^{(ij)} +
   w_{ji} E^{(ji)}, \qquad w_{ji}:= \ov w_{ij},
$$
with a matrix $Q$ that has zero matrix element at the $(i,j)$ and $(j,i)$ positions.

By the resolvent expansion, 
\[
  S_{\gamma-1}= R - N^{-1/2} RVR + \ldots  + N^{-2} (RV)^4R - N^{-5/2} (RV)^5 S,\quad 
    R := \frac{1}{Q-z}, \; \; S_{\gamma-1} :=  \frac{1}{H_{\gamma-1}-z},
\]
and a similar expression holds for the resolvent $S_\gamma$ of  by $H_{\gamma}$. 
 From the local semicircle law  for individual matrix elements \eqref{Lambdaodfinal}, 
 the matrix elements of all Green functions $R$,  $S_{\gamma-1}, S_\gamma$  are bounded  
by $CN^\e$ for any $\e>0$.
By assumption \eqref{4},  the difference between 
the expectation of matrix elements of  $S_{\gamma-1}$ and $S_\gamma$
 is of order $N^{-2-\delta + C \e}$.
 Since the number of steps, $\gamma(N)$ is of order $N^2$,  the
difference in \eqref{tel} is of order $N^{2}N^{-2 - \delta+ C \e}\ll 1$, and
this proves Theorem \ref{comparison}  for a single resolvent. 
It is very simple to turn this heuristic argument into a rigorous proof
and to generalize it to the product of several resolvents.
 The real difficulty is the input that 
the local semicircle law holds for a general class of Wigner matrices.

\section{Universality for Wigner matrices: putting it together}\label{sec:put}

In this short section we put the previous information together to prove 
Theorem \ref{bulkWigner}. We first focus on the case when $b_N$ is independent of $N$. 
Recall that Theorem~\ref{thm:DBM} states that the correlation 
functions of the Gaussian divisible ensemble,
\be\label{matrixdbm1}
H_t = e^{-t/2} H_0 + (1-e^{-t})^{1/2}\, U,
\ee
where $H_0$  is the initial  Wigner matrix and 
$U$ is an independent standard GUE (or GOE) matrix,
are given by the corresponding GUE (or GOE) for $t\ge  N^{-2\fa+\e}$ 
provided that the a-priori estimate \eqref{assum3}  holds
for the solution $f_t$ of the forward equation \eqref{dy} with some exponent $\fa>0$.
Since the rigidity  of eigenvalues, Corollary  \ref{7.1}, holds uniformly for all Wigner matrices, 
we have proved  \eqref{assum3} for $\fa = 1/2 - \e$  with any $\e>0$.

{F}rom the evolution of the OU process \eqref{zij} for $v_{ij}  = N^{1/2} h_{ij}$ we have 
\be\label{smom}
\big | \E v_{ij}^s(t) -  \E v_{ij}^s(0) \big | \le C t = C N^{-1 + 3 \e}
\ee
for  $s=3, 4$ and with the choice of
 $t = N^{-1 + 3\e}$. 
 Furthermore, $\E h_{ij}^s(t)$ are independent of $t$ for $s=1, 2$
due to $\E v_{ij}(0) = 0$ and $\E v_{ij}^2(t) = 1$. 
Hence \eqref{4} is satisfied  for the matrix elements of $H_t$ and $H_0$ and 
we can thus use Theorem \ref{com} to conclude that the correlation functions of $H_t$ and $H_0$
are identical at the scale $1/N$. 
Since the correlation functions of $H_t$ are given by the corresponding Gaussian 
case, we have proved Theorem \ref{bulkWigner} under the condition
 that the probability distribution of the matrix elements  decay subexponentially. 
Finally, we need a technical cutoff argument to relax the decay condition
which  we omit here (see Section 7 in \cite{EKYY2}).

 The argument for $N$-dependent $b=b_N$ in the range
 $b_N\ge N^{-1+\xi}$, $\xi>0$, is slightly different. For such a small 
$b_N$, \eqref{abstrthm} could be established only for relatively large times, $t\ge N^{-\xi/8}$.
We cannot therefore compare $H_0$ with $H_t$ directly, since the deviation of the third
moments of $v_{ij}(0)$ and $v_{ij}(t)$ in \eqref{smom} would not satisfy
\eqref{4}. Instead, we construct an auxiliary Wigner matrix $\wh H_0$ such that  
up to the third moment {\it its} time evolution $\wh H_t$ under the OU flow \eqref{matrixdbm1}
 matches exactly the {\it original} matrix $H_0$ and the fourth moments are close
 even for $t$ of order $N^{-\xi/8}$
(see Lemma 3.4 of \cite{EYY2}).
Theorem~\ref{thm:DBM} will then be
applied for $\wh H_t$, and Theorem~\ref{comparison} can be used to compare  $\wh H_t$
and $H_0$.

\medskip

We finally discuss  the extension of  Theorem \ref{bulkWigner} without averaging
in $E'$. For {\it Hermitian} matrices, with the notations of  Theorem \ref{bulkWigner},  
for any fixed $|E|<2$ we have  that
\be\label{pointwise}
 \int_{\bR^n} \rd \alpha_1 \cdots
 \rd \alpha_n\, O(\alpha_1, 
\dots, \alpha_n)
 \frac{1}{\varrho_{sc}(E)^n} \left ( {p_{N}^{(n)} - p_{{\rm G}, N}^{(n)}} \right ) 
 \left ( {E +
\frac{\alpha_1}{N\varrho_{sc}(E)}, \dots, E + \frac{\alpha_n}{N\varrho_{sc}(E)}}\right )  \;=\; 0\,.
\ee
This convergence was first proved in Theorem 1.1 of \cite{EPRSY}   for matrices with  distribution which is $ C  n$-times  differentiable for some universal constant  $C$.
 For a general distribution it was stated as Theorem 5 in \cite{TV5}. 
Although the proof in \cite{TV5} took   a  slightly different path, 
this generalization  is an immediate  corollary  of  our previous 
results \cite{EY}. 
Recall our three step approach reviewed in the introduction.  If we substitute 
Step 2b with Step 2a,  then all our  results in the Hermitian case would need no time average. 
More precisely,   Proposition 3.1 of \cite{EPRSY}  asserts that the  bulk universality in the Hermitian
case holds at a fixed energy 
for the Gaussian convolution matrix $H_t$ with $t\sim N^{-1+\delta}$. 
The first four
moments of $H_t$ and $H_0$ are  sufficiently close to apply directly the
Green function
comparison theorem for correlation functions (Theorem~\ref{com} in this article). 
This  concludes the bulk universality
of the original matrix $H_0$ at a fixed energy, which is the  Theorem 5 in \cite{TV5}. 
In fact, our theory implies the same result  for generalized  Hermitian matrices (defined in Section 8) with finite  $4+ \e$ moments.

\section{Beta ensemble: Rigidity  estimates}   \label{beta}

The general $\beta$-ensemble with a potential $V$ is defined by the
probability measure $\mu =\mu_{\beta, V}^{(N)}$  \eqref{01}
 on $N$ ordered real points
$\lambda_1\leq \ldots\leq \lambda_N$. 
We let $\P_\mu$ and $\E_\mu$  denote the probability and the expectation with respect 
to $\mu$.
For simplicity of  presentation we assume that 
 the potential $V$ is convex,  i.e., 
\begin{equation}\label{eqn:LSImu}
\varpi :=\frac{1}{2} \inf_{x\in\RR}V''(x) > 0,
\end{equation}
the equilibrium density  $\varrho(s)$ is supported on a single interval 
$[A,B]\subset \bR$ and satisfies \eqref{equilibrium}
(for the general case, see \cite{BEY2}). 
The  Gaussian case corresponds to 
$V(x)=x^2/2$,
in which case  the equilibrium density is
the semicircle law, $\varrho_{sc}$, given by  \eqref{sc}.
Our  main result concerning the universality is Theorem \ref{bulkbeta} and 
similar  statement holds for the 
universality of the
gap distributions directly. In fact, the proof of Theorem \ref{bulkbeta} goes via 
the gap distribution as we now  explain.

Similarly to \eqref{def:gamma} we again
denote by $\gamma_k$  the classical location 
of the $k$-th point  w.r.t. the limiting  equilibrium
density $\varrho(s)$, i.e. $\gamma_k$ is defined by
\be
 \int_{-\infty}^{\gamma_k}\varrho(s)\rd s=\frac{k}{N}.
\label{classrho}
\ee
The first step to prove Theorem \ref{bulkbeta}  is the following theorem 
which provides a rigidity estimate  on the location of each individual point
in the bulk almost down to the optimal scale $1/N$.
In the following, we will denote $\llbracket x,y\rrbracket=\NN\cap[x,y]$.

\begin{theorem}\label{thm:accuracy}\cite[Theorem 3.1]{BEY}
Fix any $\alpha, \e >0$ and assume that \eqref{eqn:LSImu} holds. Then there are constants
$\delta,c_1,c_2>0$ such that for any $N\geq 1$ and $k\in\llbracket \alpha N,(1-\alpha) N\rrbracket$,
$$
\P_\mu\left(|\lambda_k-\gamma_k|> N^{-1+\e}\right)\leq c_1e^{-c_2N^\delta}.
$$
\end{theorem}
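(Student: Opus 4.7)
The plan is to establish the rigidity estimate through three ingredients: concentration of linear statistics via the logarithmic Sobolev inequality, accuracy of expected values via a loop (Schwinger--Dyson) equation, and inversion from the Stieltjes transform to individual eigenvalues via a Helffer--Sj\"ostrand argument. Convexity \eqref{eqn:LSImu} combined with the convexity of the logarithmic interaction (as in \eqref{convex}) gives $\nabla^2\cH \ge \varpi/2$, so $\mu$ satisfies the LSI with constant of order $\varpi^{-1}$. Consequently, for any Lipschitz $F(\lambda_1,\ldots,\lambda_N)$,
$$
\P_\mu\bigl(|F - \E_\mu F| \ge t\bigr)\le 2\exp\Bigl(-\tfrac{c N t^{2}}{\|\nabla F\|_{\infty}^{2}}\Bigr).
$$
Applied to $F(\lambda)=m_N(z)=N^{-1}\sum_j(\lambda_j-z)^{-1}$ with $z=E+\mathrm{i}\eta$ and $\eta\ge N^{-1+\e}$, this yields $|m_N(z)-\E_\mu m_N(z)|\le N^{\e}/(N\eta)$ with probability at least $1-\exp(-cN^{\e})$. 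The same input also gives concentration of the smoothed counting function near any fixed energy.

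The next step is to control $\E_\mu m_N(z)$ by means of the loop equation. Integration by parts in each $\lambda_k$ of $\int \partial_k\bigl((\lambda_k-z)^{-1}\bigr)\,d\mu$, combined with the eigenvalue interaction, produces the identity
$$
\E_\mu\!\Bigl[m_N(z)^{2} - V'(z)\,m_N(z)\Bigr] \;=\; -\,\E_\mu\!\Bigl[\tfrac{1}{N}\sum_{k}\tfrac{V'(\lambda_k)-V'(z)}{\lambda_k-z}\Bigr] + O(N^{-1}).
$$
Writing $\E_\mu[m_N(z)^{2}] = (\E_\mu m_N(z))^{2} + \var_\mu(m_N(z))$, the variance term is bounded by $N^{2\e}/(N\eta)^{2}$ from the LSI. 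One thus obtains a closed self-consistent equation for $\E_\mu m_N(z)$ whose linearization around the equilibrium Stieltjes transform $m_\varrho(z)$, determined by \eqref{equilibrium}, is invertible in the bulk because $V''>0$ and $\varrho$ is supported on a single interval. Solving perturbatively yields $|\E_\mu m_N(z)-m_\varrho(z)|\le N^{-1+C\e}/\eta$ for $\re z$ in the bulk and $\eta\ge N^{-1+\e}$.

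Combining these two estimates gives $|m_N(z)-m_\varrho(z)|\le N^{-1+C\e}/\eta$ with overwhelming probability. The Helffer--Sj\"ostrand formula \eqref{1*}, applied to a smooth approximation of $\mathbf{1}_{(-\infty,E]}$ on scale $\eta=N^{-1+\e}$, converts this into the counting-function estimate
$$
\Bigl|\#\{j:\lambda_j\le E\} - N\!\int_{-\infty}^{E}\varrho(s)\,ds\Bigr|\le N^{C\e}
$$
on the same good event. Evaluating at $E=\gamma_k\pm N^{-1+\e}$ and using that $\varrho(\gamma_k)>c>0$ for $k\in\llbracket\alpha N,(1-\alpha)N\rrbracket$ gives $|\lambda_k-\gamma_k|\le N^{-1+C\e}$ in the bulk, which is the desired rigidity after relabelling $\e$.

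The principal difficulty is the second step: closing the loop equation uniformly down to the optimal scale $\eta\sim N^{-1+\e}$. The nonlinear term $m_N^{2}$ couples the mean to the variance, and one must iterate the concentration/loop equation input so that the a priori bound on $\var_\mu(m_N)$ is consistent with the resulting bound on $\E_\mu m_N - m_\varrho$. A further subtlety is that the last term in the Schwinger--Dyson identity involves the distribution of $V'(\lambda_k)$ which is not itself a Stieltjes transform; controlling it requires a uniform analyticity argument exploiting $V$ being real analytic and $\rho$ supported on a single interval, together with a stability estimate for the linearised equation in a suitable function space. Everything else---LSI concentration and Helffer--Sj\"ostrand inversion---is then comparatively routine.
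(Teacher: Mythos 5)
There is a genuine gap, and it sits exactly where you flag the ``principal difficulty''. Your first step already fails quantitatively: the Herbst/LSI concentration bound for $F=m_N(z)$ has exponent of order $N\varpi t^2/\|\nabla F\|_\infty^2$, and since $\|\nabla m_N\|^2=N^{-2}\sum_j|\lambda_j-z|^{-4}$ can be as large as $N^{-1}\eta^{-4}$, the exponent at the target deviation $t\sim N^{\e}(N\eta)^{-1}$ is of order $\eta^2N^{2\e}$, which tends to \emph{zero} once $\eta\ll N^{-1/2}$; so the claimed bound $|m_N-\E_\mu m_N|\le N^{\e}/(N\eta)$ with probability $1-e^{-cN^{\e}}$ is not available at $\eta\sim N^{-1+\e}$. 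The same problem kills the closure of the loop equation: the global LSI only gives $\var_\mu(m_N(z))\lesssim (N^2\eta^3)^{-1}$, which at $\eta=N^{-1+\e}$ is $N^{1-3\e}$, i.e.\ not even small, so the term $N^{-2}k_N(z)$ in \eqref{eqn:firstLoop} (equivalently the right side of \eqref{55}) cannot be treated as an error down to the optimal scale. What the loop equation plus global LSI actually yield is control of the \emph{averaged} quantity $\bar m_N=\E_\mu m_N$ and hence only an accuracy of order $N^{-1/2}$ for $\E_\mu\lambda_k-\gamma_k$; and, as the paper stresses, an estimate on expectations alone cannot be fed back into the variance, because estimating $\var_\mu$ of a nontrivial function of $\lambda_k$ requires \emph{high-probability} localization of $\lambda_k$, not just control of its mean. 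Your proposed remedy, ``iterate the concentration/loop equation input'', does not address this: the LSI constant of $\mu$ is fixed of order one, so iteration with the same concentration inequality never improves the scale.

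The missing idea is the multiscale convexification bootstrap of \cite{BEY}. Once one knows $|\lambda_i-\lambda_j|\lesssim M/N$ for indices in a window $I_k^M$ (initially with $M=N$), the Hessian of $\cH$ restricted to the hyperplane $\sum_j v_j=0$ of difference directions is bounded below by $cN/M$, as in \eqref{convex2}; applying concentration to the block averages $\lambda_k^{[M_j]}-\lambda_k^{[M_{j+1}]}$ and telescoping gives the enhanced bound \eqref{concen1}, $\P_\mu(|\lambda_k-\E_\mu\lambda_k|>a)\le Ce^{-cN^2a^2/M}$, i.e.\ fluctuations of size $\sqrt{M}/N$ rather than $N^{-1/2}$. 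Combining this high-probability statement with the expectation accuracy coming from the loop equation (where, for real analytic $V$, the troublesome $b_N$ term is removed by contour integration rather than by a stability argument) improves the localization scale from $M$ to some $M'\ll M$, and iterating from $M=N$ down to $M\sim N^{C\e}$ produces the rigidity $|\lambda_k-\gamma_k|\le N^{-1+\e}$ in the bulk. Your Helffer--Sj\"ostrand inversion step is fine once one has a Stieltjes-transform or counting-function estimate at the relevant scale, but without the local-convexity bootstrap neither the concentration input nor the loop-equation closure holds below $\eta\sim N^{-1/2}$, so the proof as proposed does not reach the claimed $N^{-1+\e}$ accuracy.
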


\bigskip

The first ingredient to prove Theorem \ref{thm:accuracy} is an analysis of the loop equation
following Johansson \cite{Joh} and Shcherbina \cite{Sch}. 
 The equilibrium density $\varrho$, for a convex potential $V$,  is given by 
 \begin{equation}\label{eqn:rho}
\varrho(t) =\frac{1}{\pi}r(t)\sqrt{(t-A)(B-t)}\mathds{1}_{[A,B]}(t),
\end{equation}
where $r$ is a real function  that
can be extended to an analytic function in $\CC$  and $r$ 
has no zero in $\RR$.
Denote by  $s(z):=-2r(z)\sqrt{(A-z)(B-z)}$ where the square root is defined such
 that its asymptotic value 
is $z$ as $z\to\infty$. Recall that the density
is the one-point correlation function which is characterized  by
\be
\int_\bR \rd \lambda_1   O(\lambda_1)  p^{(1)}_N(\lambda_1) =
\int_{\bR^N}  O(\lambda_1) 
 \rd\mu_{\beta, V}^{(N)}(\lambda), \qquad \lambda = (\la_1, \la_2, \ldots, \la_N).
\label{def:dens}
\ee
  Let  $\bar m_N$ and $m$ be the Stieltjes transforms of the  density $p_N^{(1)}$
and the equilibrium density $\varrho$, respectively.  Notice that
in Section~\ref{sec:refined}  we have used $m=m_N$ to denote 
the Stieltjes transform of the  empirical measure \eqref{St}; 
here $\bar m_N$ denotes the ensemble average of the analogous quantity.

Define the analytic functions 
$$
b_N(z):=\int_{\RR}\frac{V'(z)-V'(t)}{z-t}(p_1^{(N)}-\varrho)(t)\ind t 
$$
and  $c_N(z):=\frac{1}{N^2}k_N(z)+\frac{1}{N}\left(\frac{2}{\beta}-1\right)\bar m_N'(z)$,
where $k_N(z):=\var_\mu\left(\sum_{k=1}^N\frac{1}{z-\lambda_k}\right).$
Here for  complex random variables $X$ we use the definition that 
$\var(X)=\E(X^2)-\E(X)^2$.

 The equation used by Johansson 
(which can be obtained by a change of variables
in \eqref{def:dens}
 \cite{Joh} or by integration by parts \cite{Sch}), is
 a variation of the loop equation (see, e.g., \cite{Ey})
used in the physics literature and it takes the form
\begin{equation}\label{eqn:firstLoop}
( \bar m_N-m)^2+s(\bar  m_N-m)+b_N=c_N.
\end{equation}

Equation  \eqref{eqn:firstLoop} expresses the difference $\bar m_N - m$ 
in terms of $(\bar m_N-m)^2$, $b_N$ and $c_N$.
In the  regime where $|\bar m_N - m|$  is small,  we 
can neglect the quadratic term.  The term $b_N$ is of the same order as
$|\bar m_N-m|$ and is  difficult  to treat. As observed in \cite{APS,Sch}, for analytic  $V$,
  this term vanishes
when we perform a contour integration. So we have roughly the relation
\be\label{55}
( \bar m_N-m) \sim \frac 1 { N^2} \var_\mu\left(\sum_{k=1}^N\frac{1}{z-\lambda_k}\right),
\ee
where we dropped the less important error involving $\bar m_N'(z)/N $ due to the extra $1/N$ factor.
 In the convex setting,
the variance can be estimated by  the logarithmic Sobolev
inequality and we immediately obtain an estimate on $\bar m_N - m$.
We then use the Helffer-Sj\"ostrand formula, see \eqref{1*}, 
 to  estimate the locations of the particles. 
 This will provide us with an accuracy of order $N^{-1/2}$
for $\E_\mu \lambda_k-\gamma_k$.  This argument gives only
an estimate on the expectation of the locations of the particles  since 
we only have  information
 on the  averaged quantity, $\bar m_N$. 
 Although it is tempting
to use  this new accuracy information on the particles
 to estimate the variance again in \eqref{55}, the information on 
 the expectation on $\lambda_k$ alone is very difficult to use 
in a bootstrap argument.
To estimate the variance of a non-trivial function of $\lambda_k$
 we need high probability estimates 
on  $\lambda_k$.

The key idea in this section is the observation that the 
accuracy information on the $\lambda$'s can be used to  improve the local convexity
of the measure $\mu$ in  the direction involving the {\it differences} of $\lambda$'s.
To explain this idea, we compute the  Hessian of the Hamiltonian of $\mu$:
$$
\Big\langle \bv , \nabla^2  \cH(\la)\bv\Big\rangle
\ge   \varpi  \,   \|\bv\|^2 + \frac{1}{N}
 \sum_{i<j} \frac{(v_i - v_j)^2}{(\la_i-\la_j)^2}. 
$$
The naive lower bound on $\nabla^2  \cH$ is $\varpi$, but for a
 typical $\lambda=(\la_1, \la_2, \ldots, \la_N)$ it is in fact much better
in most directions. To see this effect,  suppose  
we know $ |\la_i-\la_j| \lesssim M /N$ 
 with some $M$ for any  $i, j  \in I_k^M$, where
$I_k^M := \llbracket k-M, k+M\rrbracket$. 
Then for $\bv = (v_{k-M}, \ldots, v_{k+M})$
with $\sum_j v_j = 0$ we have 
\be\label{convex2}
\Big\langle \bv , \nabla^2  \cH(\la)\bv\Big\rangle
\ge \frac{N}{M^2}
 \sum_{i, j \in I_k^M}  (v_i - v_j)^2 \ge C \frac N M \sum_j v_j^2.  
\ee
 This improves the convexity of the Hessian to $N/M$  on the hyperplane $\sum_j v_j = 0$.
Let 
$$
\lambda_k^{[M]}: = |I_k^M|^{-1}\sum_{j\in I_k^M}\lambda_j
$$
denote the block average 
of the locations of particles and rewrite 
$$ 
 \lambda_k - \lambda_k^{[N^{1-\e}]}=  \sum_j \Big( \lambda_k^{[M_j]}- \lambda_k^{[M_{j+1}]}\Big)
$$
as a telescopic sum with an appropriate sequence of $M_1=0$, $M_2, \ldots$.
We can now use the improved concentration on the hyperplane $\sum_j v_j = 0$ to the 
variables $\lambda_k^{[M_j]}- \lambda_k^{[M_{j+1}]}$ to
 control the fluctuation of $ \lambda_k - \lambda_k^{[N^{1-\e}]}$. 
Since the fluctuation of $  \lambda_k^{[N^{1-\e}]}$ is very small for small $\e$, 
we finally arrive at the estimate 
\be\label{concen1}
\P_\mu \left(|\la_k-\E_\mu(\la_k)|> a \right)\leq C e^{- C  N^2 a^2/ M }.
\ee

{F}rom \eqref{concen1} we thus  have that $|\la_k - \E_\mu \la_k| \lesssim \sqrt M /N$
with high probability. This improves the starting accuracy 
$ |\la_i-\la_j| \lesssim M /N$  for $i, j  \in I_k^M$  to $ |\la_i-\la_j| \lesssim M' /N$
with some $M'\ll M$,
 provided that we can prove  that  $ |\E_\mu (\la_i-\la_j )| \ll  M' /N$.
 But the last inequality 
involves only expectations and it
will follow from the analysis of the loop equation \eqref{eqn:firstLoop} 
we just mentioned  above.  Starting from $M=N$, this procedure can
 be repeated by decreasing $M$ step by step until we get the optimal 
accuracy, $M \sim O(1)$.
The implementation of this argument in \cite{BEY} is somewhat
different from this sketch due to various technical issues, but it follows the same basic idea.

\section{Beta ensemble: The local equilibrium measure}\label{sec:loceq}

\medskip 

Having completed the first step, the rigidity
  estimate, we now focus on the second
step, i.e. on the uniqueness of the local Gibbs measure.
Let $0<\kappa<1/2$. Choose $q\in [\kappa, 1-\kappa]$ and set $L=[Nq]$ (the integer part).
 Fix an integer  $K=N^k$ with $k<1$.
 We will study the local spacing statistics
of $K$ consecutive particles
$$
  \{ \lambda_j\; : \; j\in I\}, \qquad I=I_L:=
 \llbracket L+1, L+K \rrbracket.
$$
These particles are typically located near
$E_q$ determined by the relation
$$
  \int_{-\infty}^{E_q} \varrho(t) \rd t = q.
$$
Note that $|\gamma_L- E_q|\le C/N$.

We will distinguish the inside and outside particles
by renaming them as
\be\label{35}
(\lambda_1, \lambda_2, \ldots,
\lambda_N):=(y_{1}, \ldots y_{L}, x_{L+1},  \ldots, x_{L+K}, y_{L+K+1},
  \ldots y_{N}) \in \Xi^{(N)},
\ee
but note that they keep their original indices.
The notation $\Xi^{(N)}$ refers to the simplex
$\{\bz \; :\; z_1<z_2< \ldots < z_N\}$ in $\RR^N$.
In short we will write
$$
\bx=( x_{L+1},  \ldots, x_{L+K} ), \qquad \mbox{and}\qquad
 \by=
 (y_{1}, \ldots, y_{L}, y_{L+K+1},
  \ldots, y_{N}),
$$
all  in increasing order, i.e. $\bx\in \Xi^{(K)}$ and
$\by \in \Xi^{(N-K)}$.
We will refer to the $y$'s as   {\it external
points}  and to the $x$'s as  {\it internal points}.

We will fix the external points (also called
as boundary conditions) and study
conditional measures on the internal points.
We  define  the
{\it local equilibrium measure} on $\bx$ with fixed boundary condition  $\by$ by
\begin{equation}\label{eq:muyde}
 \quad
\mu_{\by} (\rd\bx)  = \mu_\by(\bx) \rd \bx, \qquad
\mu_\by(\bx):=  \mu (\by, \bx) \left [ \int \mu (\by, \bx) \rd \bx \right ]^{-1}.
\end{equation}
Note that for any fixed $\by\in \Xi^{(N-K)}$,  
the measure $\mu_\by$ is supported on configurations of $K$ points 
$\bx=\{ x_j\}_{j\in I}$ 
 located in the interval $[y_{L}, y_{L+K+1}]$.

The Hamiltonian $\cH_\by$ of the measure $\mu_\by (\rd \bx) \sim \exp(-N\cH_\by(\bx))\rd\bx$
 is given by 
\be\label{24}
\cH_{\by} (\bx) :=
 \sum_{i\in I}  \frac{\beta}{2}V_\by (x_i)
-  \frac{ \beta }{N} \sum_{i,j\in I\atop i< j}
\log |x_{j} - x_{i}| 
\qquad \mbox{with}\qquad
V_\by (x): = V(x) - \frac{ 1 }{ N} \sum_{j \not \in I}
\log |x - y_{j}|.
\ee
We now define the set of {\it good boundary configurations}
with  a parameter $\delta=\delta(N)>0$
\begin{align}\label{goodset}
  \cG_{\delta}=\cG:=
\Big\{   \by \in \Xi^{(N-K)}\; :\; |y_j-\gamma_j|\le \delta, 
\; \forall \, j\in
\llbracket N\kappa/2, L\rrbracket \cup \llbracket L+K+1,
N(1-\kappa/2)\rrbracket
\Big\},
\end{align}
where $\kappa$ is a small constant to cutoff points near the spectral edges.  Some 
rather weak  additional conditions
 for $\by$ near the spectral edges will also be needed, but  we will neglect
this issue here.

Let   $\sigma $ and $ \mu$ be two measures of the form \eqref{01}  
with  potentials $W$ and $V$ and 
 densities $\varrho=\varrho_W$ and $\varrho_V$, respectively.
For our purpose $W(x)=x^2/2$,  i.e., $\sigma$ 
is the Gaussian $\beta$-ensemble
and $\varrho_W(t) =\frac{1}{2\pi}(4-t^2)^{1/2}_+$ is the Wigner semicircle law.
Let  the sequence $\gamma_j$ be the classical locations for $\mu$ and
the sequence $\theta_j$ be the classical locations for $\sigma$.
Similarly to the construction of the  measure $\mu_{\by}$,
 for any positive integer $L' \in \llbracket 1, N-K \rrbracket$  we can construct 
the measure $\sigma_\bt$ conditioned that the particles outside are given by 
the classical locations $\theta_j$ 
for $j \notin
\llbracket  L',  L'+K \rrbracket$. 
More precisely,  we define a {\it reference
local Gaussian  measure} $\sigma_\th$ on the set $[\th_{L'}, \th_{L'+K+1}]$ via the  Hamiltonian
\be\label{241}
\cH_{\th} (\bx) =
 \sum_{i \in I'}  \Big [  \frac{\beta}{4}   x_i^2   - \frac{ \beta }{N} \sum_{j \not \in I'}
\log |x_i - \th_{j} | \Big ]
-  \frac{ \beta }{N} \sum_{ i,j \in I'\atop i<j }
\log |x_{j} - x_{i}| ,
\ee
where  $I':= \llbracket L'+1, L'+K\rrbracket$.
Since $L'$ will not play an active role, we will 
abuse the notation and set $L'=L$. 

The measure $\mu_{\by}$ lives on the interval 
$[y_L, y_{L+K+1}]$ while the measure $\sigma_{\bt}$ lives on the interval 
$[\theta_L, \theta_{L+K+1}]$ and it is difficult to compare them. 
But after an appropriate translation and dilation, they will live on the same interval and from now on 
we assume that  $[y_L, y_{L+K+1}]= [\theta_L, \theta_{L+K+1}]$. 
The parameter $K=N^k$ has to be sufficiently small since  $\varrho_V$ and 
$\varrho_W$ are not constant functions
and we have 
to match these two densities quite precisely  in the whole interval. 
There are some other subtle  issues related to the rescaling, 
but  we will neglect them here to concentrate on the main ideas.  
Our main result is the following theorem
which is essentially a combination of Proposition 4.2 and Theorem 4.4 from  \cite{BEY}.

\begin{theorem}\label{thm:mi} 
Let $0<\varphi\le \frac{1}{38}$.
 Fix $K=N^k$, $\delta = N^{-d}$
with $d=1-\varphi$ and $k=\frac{39}{2}\varphi$.
Then for $\by\in \cG$  we have 
\be\label{381}
\Bigg|  \E_{ \mu_{\by} } \frac 1 K \sum_{i \in I} G\Big( N(x_i-x_{i+1}) \Big)
   -\E_{ \sigma_{\th} }
 \frac 1 K \sum_{i \in I} G\Big( N(x_i-x_{i+1}) \Big)\Bigg|
\to 0
\ee
as $N \to \infty$
for any smooth and compactly supported test function $G$.
A similar formula holds for more complicated observables of the form
\eqref{cG}.
\end{theorem}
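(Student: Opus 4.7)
The plan is to treat both $\mu_{\by}$ and $\sigma_{\bt}$ via the local relaxation scheme used in Section~\ref{sec:DBM} for the proof of Theorem~\ref{thmM}, exploiting the strong convexity of their Hamiltonians in the \emph{relative} coordinates of the $K$ internal particles. Specifically, applying the computation behind \eqref{convex} to $\cH_{\by}$ (and analogously $\cH_{\bt}$), for any $\bv = (v_i)_{i\in I}$ with $\sum_{i\in I} v_i = 0$ one gets a Hessian bound of order $N$ coming from the logarithmic repulsion between internal particles at typical spacing $1/N$. This is ample convexity to drive relaxation of gap-type observables (which depend only on differences) on time scale $\tau$ of order $1/N$, exactly the regime where local gap statistics emerge.

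First I would set up a local DBM on the $K$ internal coordinates with $\by$ (resp.\ $\bt$) frozen, whose reversible measure is $\mu_{\by}$ (resp.\ $\sigma_{\bt}$). To run the Bakry--\'Emery machinery uniformly I would then add an auxiliary confinement $W(\bx) = \frac{1}{2\tau}\sum_{i \in I}(x_i - \gamma_i)^2$, producing local relaxation measures $\omega_{\by}$ and $\omega_{\bt}$ whose logarithmic Sobolev constants are of order $\tau$. The a priori input needed to control the cost of adding $W$ is a bound of the form $\E_{\mu_{\by}} \sum_{i \in I}(x_i - \gamma_i)^2 \lesssim K\delta^2$, which follows from Theorem~\ref{thm:accuracy} combined with the hypothesis $\by \in \cG_{\delta}$. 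Feeding this into the Dirichlet form inequality of Theorem~\ref{thm3}, applied with $|J|=K$, forces the gap statistics of $\mu_{\by}$ to coincide asymptotically with those of $\omega_{\by}$ (and likewise $\sigma_{\bt}$ with $\omega_{\bt}$) provided $K$ and $\tau$ are tuned as in the statement.

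Next I would compare $\omega_{\by}$ and $\omega_{\bt}$ directly. After the translation and rescaling that aligns the two intervals, the two Hamiltonians differ by the bulk term $V(x) - x^2/2$, which on an interval of length $K/N = N^{k-1}$ is essentially affine and so drops out of the conditional measure, plus the external log-sum difference $\Delta(x) := \frac{1}{N}\sum_{j \notin I}\bigl[\log|x-y_j| - \log|x-\theta_j|\bigr]$. Using $|y_j - \theta_j| \leq \delta + |\gamma_j - \theta_j|$ and summation by parts, $\Delta'(x)$ can be bounded by a quantity much smaller than $1/K$ uniformly on the interval, so $\Delta$ is effectively a constant and a Radon--Nikodym or relative-entropy comparison shows that $\omega_{\by}$ and $\omega_{\bt}$ share the same gap statistics in the limit. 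Chaining $\mu_{\by} \sim \omega_{\by} \sim \omega_{\bt} \sim \sigma_{\bt}$ then yields \eqref{381}.

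The hardest step will be the quantitative control of $\Delta$ near the two endpoint particles $y_L$ and $y_{L+K+1}$, where the logarithmic singularities of the nearest external points are not softened by any spacing. This is what forces the specific relationship between $k=\tfrac{39}{2}\varphi$ and $d=1-\varphi$: the boundary particles must be pinned far more accurately than the internal gap scale $1/N$, while $K$ must be small enough that the mean-field external potentials of $\by$ and $\bt$ match across the full interval. I expect this to require a multi-scale telescoping of logarithmic sums in the spirit of the bootstrap argument underlying Theorem~\ref{thm:accuracy}, combined with the concentration inequality \eqref{concen} to upgrade expectation bounds on $y_j$ to high-probability bounds strong enough to absorb the endpoint contribution.
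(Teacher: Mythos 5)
Your overall skeleton --- adding quadratic confinement to both $\mu_\by$ and $\sigma_\bt$ to form local relaxation measures, invoking the Dirichlet-form-per-particle inequality of Theorem~\ref{thm3} with $|J|=K$, and feeding in rigidity via $\by\in\cG_\delta$ --- is indeed the paper's strategy. But your comparison of the two conditioned measures is organized around two claims that are individually false. You discard $V(x)-x^2/2$ as ``essentially affine, so it drops out,'' and you claim $\Delta'(x)=\frac1N\sum_{j\notin I}\bigl[(x-y_j)^{-1}-(x-\theta_j)^{-1}\bigr]$ is uniformly much smaller than $1/K$ because $|y_j-\theta_j|\le\delta+|\gamma_j-\theta_j|$. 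In fact an affine term in the potential, multiplied by $N$ in the exponent, tilts the conditional measure by order $K$ per particle across the interval and does not drop out; and $|\gamma_j-\theta_j|$ is of order one, since $\varrho_V$ and $\varrho_{sc}$ (even their supports) differ globally, so $\Delta'$ is of order one as well. What is small is only the \emph{combination}: by the equilibrium relation \eqref{equilibrium} the external log-sums approximate $\int\varrho_V(y)(x-y)^{-1}\rd y$ and $\int\varrho_{sc}(y)(x-y)^{-1}\rd y$, which cancel $\frac12 V'(x)$ and $\frac12 x$ respectively, leaving only the error terms $A_j$ (sum versus integral over the external region, controlled by $|y_k-\gamma_k|\le\delta$) and $B_j$ (the density difference on the short interval), as in \eqref{Adef}--\eqref{fiveom}. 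Your bookkeeping must be restructured along these lines before any smallness appears.

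The more serious gap is the endpoint singularities, which you identify but propose to handle by upgrading the accuracy of the boundary points through concentration; this cannot work. For an internal $x_j$ near $y_L$ and external indices $k$ within $N^\e$ of $L$, the terms $(x_j-y_k)^{-1}-(x_j-\gamma_k)^{-1}$ are not controlled by $|y_k-\gamma_k|\le\delta$: the denominators can be of the size of a single gap $\sim 1/N$ or smaller, while $\delta$ can never be pushed below the gap scale because individual points fluctuate at least on that scale, and external points with $k<L$ can genuinely pile up near $y_L$. No high-probability refinement of the $y_j$'s makes these terms small pointwise. The paper's resolution is a different device: using the total-variation/entropy inequality \eqref{Diff1}, one shows the averaged gap statistics of $\mu_\by$ are insensitive to replacing the $O(N^\e)$ boundary values $y_k$ nearest the edges by $\theta_k$, after which the most singular terms cancel identically; the cruder entropy bound is affordable precisely because only $N^\e$ coordinates are changed. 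Without this (or an equivalent) idea the edge contribution to the Dirichlet form is uncontrolled and your chain $\mu_\by\sim\omega_\by\sim\omega_\bt\sim\sigma_\bt$ does not close. (A smaller point: the bound $\E_{\mu_\by}\sum_{i\in I}(x_i-\gamma_i)^2\lesssim K\delta^2$ needs a conditional version of Theorem~\ref{thm:accuracy} valid for a fixed $\by\in\cG$, not just the unconditional statement.)
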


The basic idea for proving Theorem \ref{thm:mi}  is to use the 
 Dirichlet  form
 inequality \eqref{diff}.
Although  \eqref{diff}  was stated for an infinite volume measure, it holds for any measure 
with  repulsive logarithmic interactions in a finite volume 
and with  the parameter $\tau^{-1}$ being 
the lower bound on the Hessian of the Hamiltonian.
In our setting, 
we denote by $\tau_\sigma^{-1}$  the  lower bound for $\nabla^2\cH_\sigma$,
 and the Dirichlet form inequality becomes 
\be\label{relax}
   \Bigg| \big [\E_{ \mu_{\by}} -\E_{ \sigma_{\bt}}  \big  ]
\frac 1 K \sum_{i \in I} G\Big( N(x_i-x_{i+1}) \Big)\Bigg|
\le C \Big(  \frac { \tau_\sigma  N^{\e}}{ K}
 D \big  (\mu_{\by} | \sigma_{\th}   \big  ) \,    \Big)^{1/2}
+ Ce^{-cN^{\e}} \sqrt{S (\mu_{\by} | \sigma_{\bt}   \big  )},
\ee
where 
\be
  D(\mu_\by\mid \sigma_\bt) : =
 \frac{1}{2 N} \int \Big|\nabla \sqrt{ \frac{\rd\mu_\by}{\rd\sigma_\bt}}\Big|^2\rd \sigma_\bt.
\ee
Thus our task is to prove  that 
\be\label{d1}
 \tau_\sigma  N^{\e} \frac {   D(\mu_\by\mid \sigma_\bt)  }{K}  \to 0 .
\ee

By definition,
\[  \frac{\tau_\sigma}{K}  D \big(\mu_\by \mid \sigma_\th)
 \le
\frac{\tau_\sigma N}{ K} \int  \sum_{ L+1\le  j \le L+ K }  Z_j^2 \rd  \mu_{\by},
\]
where $Z_j$ is defined as
\begin{align}
Z_j := &
 \frac{\beta}{2}V'(x_j) -
  \frac \beta N \sum_{    k < L\atop k >L+K }
   \frac 1 {x_j- y_{k}}
 -  \frac{\beta}{2}W'(x_j)  +
 \frac \beta N \sum_{  k <  L\atop k>L+K}   \frac 1 {x_j-  \th_{k}}.
 \label{Zdef}
\end{align}
Using  the equilibrium relation \eqref{equilibrium}
between the potentials $V$, $W$ and the densities $\varrho_V$, $\varrho_W$,
 we have 
\begin{align}
Z_j = &
 \beta\int_\bR  \frac{\varrho_V(y)}{x_j-y}\rd y -
  \frac \beta N \sum_{    k < L\atop k >L+K }
   \frac 1 {x_j- y_{k}}
 - \beta\int_\bR  \frac{\varrho_W(y)}{x_j-y}   \rd y +
 \frac \beta N \sum_{  k <  L\atop k>L+K}   \frac 1 {x_j-  \th_{k}}.
\nonumber
\end{align}
Hence  $Z_j$ is the sum of the error terms,
\begin{align}
A_j  : &= \int_{ y\not \in [y_{L}, y_{L+K+1} ] }    \frac {\varrho_V(y) } {x_j- y }  \rd y
- \frac{1}{N} \sum_{    k < L\atop k >L+K }
   \frac 1 {x_j- y_{k}} ,
\label{Adef}\\
B_j & : = \int_{y_{L}}^{ y_{L+K+1} }    \frac {\varrho_V(y) - \varrho_{W}(y)  } {x_j- y } \rd y,
\label{fiveom}
\end{align}
and there is a term  similar to $A_j$ with $y_j$ replaced by $\theta_j$ and $\varrho_V$
 replaced by $\varrho_W$.

 With our convention,  the total numbers of particles 
in the interval $[y_{L+K+1}, {y_{L}}]$ are equal and thus 
$$
\int_{y_{L}}^{ y_{L+K+1} }  \varrho_V(y)   \rd y = \int_{y_{L}}^{ y_{L+K+1} }  \varrho_{W}(y)  \rd y.
$$
Since the  densities  $\rho_V$ and $\rho_W$ are $C^1$ functions away from the endpoints  $A$ and $B$
and $  y_{L+K+1}- {y_{L}}$ is small, $ |\rho_V-\rho_W|$ is small in the interval $[y_{L+K+1}, {y_{L}}]$ and
thus   $B_j$ is small.
For estimating $A_j$, we can replace the integral $\int_{ -\infty}^{y_{L}}    
 \frac {\varrho_V(y) } {x_j- y }  \rd y$ by 
$\frac 1 N  \sum_{  k< L   }  \frac 1 {x_j- \gamma_{k}}$ with negligible errors,
 at least for $j$'s away from the edges, 
$j\in \llbracket L+N^\e, L+K-N^\e\rrbracket$.
 Thus
\be\label{b7}
|A_j |  \le   \frac C N  \Big | \sum_ { k <  L\atop k>L+K} T^k_j
  \Big|,  \qquad T^k_j :=  \frac 1 {x_j- y_{k}} -  \frac 1 {x_j- \gamma_{k}} ,
\ee
and $T^k_j$ can be estimated by the assumption
  $|y_k -\gamma_k |\le \delta$ from $\by\in \cG$. The same argument works if $j$ is 
close to the edge, but $k$ is away  from the edges, i.e. $k\le L-N^\e$ or $k\ge L+K+N^\e$.
The edge terms,  $ T^k_j$ for $|j-k|\le N^\e$,  are difficult to estimate 
due to the singularity in the denominator  and  the event that  many $y_k$'s
with $k<L$ 
may pile up near $y_L$. 
 To resolve this difficulty, 
 we show that   the averaged local statistics 
of the measure $\mu_\by$  
 are insensitive to the change of the 
boundary conditions for $\by$ near the edges.  This can be achieved  by 
the simple inequality 
\be\label{Diff1}
\Big|\frac 1 K \sum_{i \in I}   \int G\big( N(x_i-x_{i+1} )\big)   [ \rd \mu_{\by'} - \rd  \mu_{\by}]
\Big|
\le C   \int |\rd \mu_{\by'} - \rd  \mu_{\by} |  \le C  \sqrt {   S(\mu_{\by'}|   \mu_{\by} ) }
\ee
for any two boundary conditions
$\by$ and $\by'$.   Although we still have to estimate the entropy
that includes a logarithmic singularity, this can be done much more easily.
Therefore, we can replace the boundary condition $y_k$ with $y_k'=\theta_k$ for 
$|j-k|\le N^\e$  and  then the most singular 
edge terms in \eqref{Zdef} cancel out.

We note that  we can 
perform this replacement only 
for a small number of index pairs $(j,k)$,
 since
estimating the gap distribution by 
the  total entropy, as noted in \eqref{entbound} in Section~\ref{sec:DBM},
is not as efficient as the estimate using  the Dirichlet form per particle. 
Thus we can afford to use this argument only for the edge terms, $|j-k|\le N^\e$.
For all other index pairs $(j,k)$ we still have
to estimate $T_j^k$ by exploiting that $\by$  is a good configuration,
i.e. $y_k-\gamma_k$ is small.

 Unfortunately,   even with the optimal 
accuracy $\delta\sim N^{-1 + \e'}$ in \eqref{goodset} as an input, 
the relation \eqref{d1} 
still cannot be satisfied  for any choice of $N^{c \e'}
 \le K \le N^{ 1 - c\e'}$. 
We do not know whether this is due to our handling of
 the edge terms or some other  intrinsic reasons. 
To understand why this might occur, we remark that while  the edge
terms become a smaller  percentage 
of the total  terms in \eqref{Diff1} as $K$ gets bigger, 
the relaxation time to equilibrium for $\sigma_\bt$,  determined by the convexity  of
$\cH_\bt''$,   increases at the same time. 
  At the end of our calculation, 
there is no good regime for the choice 
of $K$.  Fortunately, this can be resolved by  using the idea of
the local relaxation   measure \cite{ESYY}, i.e., we add 
a quadratic term $\frac{1}{2 \tau } (x_j -\gamma_j)^2$ to the measure
 $\mu_\by$ and $\frac{1}{2 \tau_\sigma } (x_j -\th_j)^2$
to the measure $\sigma_\bt$.  With these ideas, we can complete the proof of Theorem \ref{thm:mi}.

\section{More general classes of random  matrices}\label{sec:gen}

All our results concerning Wigner matrices hold for 
 a broader class of ensembles where
the matrix elements $h_{ij}$ still  have  mean zero, $\E \, h_{ij}=0$, but  
 their   variances  are allowed to vary. More precisely, 
we assume that the variances $ \sigma_{ij}^2: = \E |h_{ij}|^2$
satisfy the normalization condition
\be
  \sum_{j=1}^N \sigma_{ij}^2 =1, \qquad i=1,2,\ldots, N,
\label{normal}
\ee
and they are comparable, i.e. 
\be
 0< C_{inf}\le   N\sigma_{ij}^2  \le C_{sup} < \infty, \qquad i,j=1,2, \ldots, N,
\label{VV}
\ee
for some fixed positive constants $C_{inf}$ and $C_{sup}$. These ensembles are called 
 {\it generalized Wigner ensembles}. 
In the special case  $\sigma_{ij}^2=1/N$, we recover the
original   Wigner ensemble. All our results concerning the bulk universality, delocalization 
of eigenvectors 
and local semicircle laws hold for  generalized Wigner matrices as well.

There is another important class of random matrices, 
 the band matrices, which are characterized by the property that $\sigma_{ij}^2$ is
a function of $|i-j|$ on scale $W$, which is called the bandwidth, i.e.,  
\be\label{BM}
   \sigma^2_{ij} = W^{-1} f\Big(\frac{ [i-j]_N}{W}\Big),
\ee
where  $f:\bR\to \bR_+$ is a bounded nonnegative symmetric function with 
$\int f =1$ 
and $[i-j]_N\equiv i-j \; \mbox{mod}\,\,\, N$. 
 For this class, the local semicircle law is known to hold 
at least down to  scale $\eta \sim W^{-1}$ and
all eigenvectors are delocalized at least on scale $W$.
Moreover, most eigenvectors are known to be delocalized on a
much larger scale $W^{7/6}$ \cite{EK, EK2},  but smaller than $W^8$
\cite{Sche}, and it is
expected  that the correct localization length is $W^2$.
So far no bulk universality result is known.

 The significance of the  random band matrices stems from the fact
that they interpolate between discrete random Schr\"odinger operators
with short range hoppings (Anderson model) and  the
Wigner matrices.  In particular, random matrix spectral statistics are
expected to hold in the presumed delocalization regime of the Anderson model
in three or higher dimensions.  For more details on this exciting connection,
see \cite{Spe}.

Finally we mention the ensemble of sample covariance matrices that
play a fundamental role in statistics. These are matrices of the form $H=A^*A$
where $A$ is an $M\times N$ matrix with independent identically distributed entries.
The semicircle law is replaced with the Marchenko-Pastur law, but   most  results
listed in this review remain valid. For more details, see \cite{ESYY, TV3}.

\section{Edge universality}\label{sec:edge}

 Denote by  $\lambda_N$ is the largest eigenvalue of a generalized   Wigner  matrix.
The 
probability distribution  functions of $\lambda_N$ for the classical Gaussian ensembles are
identified by Tracy and Widom  \cite{TW, TW2} to be  
\be\label{Fb}
\lim_{N \to \infty} \P( N^{2/3} ( \lambda_N -2) \le s ) =  F_\beta (s), 
\ee
where the functions $F_\beta(s)$ can be computed in terms 
of Painlev\'e equations and $\beta=1, 2, 4$
 corresponds to the standard classical  ensembles. The distribution of $\lambda_N$ 
is believed to be universal and independent of the Gaussian structure.

The local semicircle law, Theorem \ref{lsc},
combined with a modification of the Green function comparison theorem,
Theorem \ref{comparison},
implies  the following 
version of universality of the extreme eigenvalues. Although it holds for correlation 
functions of finite number of eigenvalues, for simplicity
 we state it for the largest one 
 and for the case of symmetric matrices only.

\begin{theorem}[Universality of the largest eigenvalue]\cite[Theorem 2.4]{EYYrigi} \label{twthm}  
Suppose that we have 
two $N\times N$  symmetric generalized Wigner matrices,
 $H^{(v)}$ and $H^{(w)}$, with matrix elements $h_{ij}$
given by the random variables $N^{-1/2} v_{ij}$ and 
$N^{-1/2} w_{ij}$, respectively, with $v_{ij}$ and $w_{ij}$ satisfying
the uniform subexponential decay condition \eqref{subexp}. Let $\P^\bv$ and
$\P^\bw$ denote the probability and let $\E^\bv$ and $\E^\bw$  denote
the expectation with respect to these collections of random variables.
Suppose that 
\be\label{2m}
    \E^\bv v_{ij}^2  =  \E^\bw w_{ij}^2. 
\ee
Then there is an $\e>0$ 
depending on $\ttau$ in \eqref{subexp}  such that 
for any real parameter $s$ (may depend on $N$)   
we have
 \be\label{tw}
 \P^\bv ( N^{2/3} ( \lambda_N -2) \le s- N^{-\e} )- N^{-\e}  
  \le   \P^\bw ( N^{2/3} ( \lambda_N -2) \le s )   \le 
 \P^\bv ( N^{2/3} ( \lambda_N -2) \le s+ N^{-\e} )+ N^{-\e}  
\ee 
for $N\ge N_0$ sufficiently  large, where $N_0$ is independent of $s$. 
\end{theorem}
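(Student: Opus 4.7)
My plan is to reduce Theorem \ref{twthm} to a comparison of expectations of a smooth functional of the Green function, exactly in the spirit of the bulk Green function comparison theorem, Theorem \ref{comparison}. Fix $s\in\R$ and set $E_s := 2 + sN^{-2/3}$. The event $\{\lambda_N\leq E_s\}$ coincides with $\{\cN(E_s,\infty)=0\}$, where $\cN(E_s,\infty)$ denotes the number of eigenvalues strictly above $E_s$. To convert this discontinuous spectral event into a smooth functional of $H$, I take a smooth cutoff $q:\R\to[0,1]$ with $q(x)=0$ for $x\leq -1$ and $q(x)=1$ for $x\geq 1$, form $q_s(x):=q\bigl((x-E_s)/\ell\bigr)$ on transition scale $\ell:=N^{-2/3-\e_1}$ for a small $\e_1>0$, and compose with a smooth $K:\R\to[0,1]$ satisfying $K(0)=0$ and $K(t)=1$ for $t\geq 1$, obtaining $F(H):=K\bigl(\tr q_s(H)\bigr)$. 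This $F$ approximates $\mathbf{1}(\lambda_N>E_s)$, the approximation error being controlled by the edge rigidity from Corollary \ref{7.1}: with very high probability no eigenvalue lies within a window of size $\ell$ around $E_s$ unless it sits in a neighborhood of width $N^{-2/3-\e_1}$, so shifting $E_s$ by $\pm N^{-2/3-\e_1}$ brackets $\E F(H)$ between $\P(\lambda_N\leq E_s\mp N^{-2/3-\e_1})$ up to terms of order $N^{-c}$. This will produce the $\pm N^{-\e}$ slack in \eqref{tw} provided $\e_1$ is chosen small enough.

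Next I apply the Helffer--Sj\"ostrand representation \eqref{1*} to $q_s$ to express
\[
\tr q_s(H) = \frac{N}{\pi}\int_{\R^2} \partial_{\bar z}\wt{q_s}(x+iy)\,m_N(x+iy)\,\rd x\,\rd y,
\]
where the integrand is supported in $|y|\leq 1$ and the portion with $|y|<\eta:=N^{-2/3-\e_2}$, $0<\e_2\ll\e_1$, is harmless in view of Theorem \ref{lsc}. Thus $F(H)$ becomes a smooth functional of the values of $m_N(z)$ for $\im z\geq \eta$. I now invoke the entry-by-entry Lindeberg telescopic replacement used to prove Theorem \ref{comparison}: fixing an ordering of the independent entries and interpolating between $H^{(v)}$ and $H^{(w)}$, the difference $\E^\bv F(H^{(\bv)})-\E^\bw F(H^{(\bw)})$ decomposes into $O(N^2)$ single-entry steps. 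At each step I Taylor-expand $F$ to third order in the local perturbation $\Delta\sim N^{-1/2}$ of the swapped matrix element. The matching of first moments (mean zero for Wigner ensembles) together with the matching of second moments \eqref{2m} eliminates the $\Delta^1$ and $\Delta^2$ contributions to the $\E^\bv-\E^\bw$ difference, so the leading per-swap error is cubic, of order $N^{-3/2}$ multiplied by derivatives of $F$.

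The reason two matched moments suffice at the edge (while four were needed in the bulk in \eqref{4}) is the improved off-diagonal bound supplied by the second assertion of Theorem \ref{lsc}: for $z=x+iy$ with $|x-2|\lesssim N^{-2/3+o(1)}$ and $y\geq\eta$, one has $\im m_{sc}(z)\sim N^{-1/3}$, hence
\[
\max_{i\neq j}|G_{ij}(z)|\;\lesssim\;(\log N)^{CL}\sqrt{\frac{\im m_{sc}(z)}{N\eta}}\;\sim\;N^{-1/3+o(1)}.
\]
When one differentiates $F(H)=K(\tr q_s(H))$ three times in a single entry $h_{ij}$, the resulting polynomials in matrix elements of $G$ produced by differentiating the Helffer--Sj\"ostrand integrand always contain several off-diagonal factors $G_{ij}(z)$, each contributing the $N^{-1/3+o(1)}$ gain above. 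Collecting these factors and multiplying $N^{-3/2}$ per swap by $N^2$ swaps yields a bound of order $N^{-\alpha}$ on the difference $\E^\bv F - \E^\bw F$ for some $\alpha>0$, which combined with the smoothing of the first paragraph gives \eqref{tw}. The main obstacle I expect is the systematic bookkeeping of the third-order Taylor expansion: one must check that after the cancellation of the first two moments, every surviving term picks up enough off-diagonal $G$ factors to harvest the $N^{-1/3}$ edge gain the required number of times, and that the derivatives of the outer cutoff $K$, as well as the $O(1)$ length of the $x$-integration along the Helffer--Sj\"ostrand contour, do not wash out this gain; this last point rests on the explicit decay of $\partial_{\bar z}\wt{q_s}(x+iy)$ in $|y|$ inherited from the auxiliary function $\chi$ in \eqref{1*}.
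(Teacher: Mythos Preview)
Your proposal is correct and follows essentially the same route as the one the paper indicates (and as carried out in the cited reference \cite{EYYrigi}): the paper states only that Theorem~\ref{twthm} follows from the local semicircle law, Theorem~\ref{lsc}, combined with a modification of the Green function comparison theorem, Theorem~\ref{comparison}, and your outline supplies precisely that modification. The decisive mechanism you identify---that near the edge $\im m_{sc}(z)\sim N^{-1/3}$, so the off-diagonal bound in \eqref{Lambdaodfinal} gives $|G_{ij}|\lesssim N^{-1/3+o(1)}$, and these extra factors in the third-order Lindeberg remainder compensate for the absence of third- and fourth-moment matching---is exactly the point of the edge version of the Green function comparison.
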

Note that although Theorem~\ref{twthm} states that the edge
distribution is universal for a fixed choice of the variances $\sigma_{ij}^2$,
it does not identify this distribution.
In particular, we do not know if it coincides with the Tracy-Widom
 distribution apart from the Hermitian case, when
the method of \cite{Joh} can be applied.
The extension of Theorem~\ref{twthm} to eigenvectors was recently obtained by 
Knowles and Yin \cite{KY}, i.e.,  under the assumption \eqref{2m}, the  
distributions for the largest eigenvectors coincide.
Similar results hold for the joint  distribution of eigenvectors near the edges. 

\section{Erd\H{o}s-R\'enyi matrix}\label{sec:ER}

The  Erd\H{o}s-R\'enyi matrix is  the
adjacency matrix of the Erd\H{o}s-R\'enyi random graph \cite{ER1, ER2}.
 Its entries are independent (up to the constraint that 
the matrix be symmetric) and are equal to $1$ with probability $p$ and $0$ with probability $1 - p$. 
We  rescale the matrix in such a way that its bulk eigenvalues typically
 lie in an interval of size of order one.
Thus we have a symmetric $N\times N$ matrix 
 $A = (a_{ij})$  whose entries 
$a_{ij}$ are independent
(up to the symmetry constraint $a_{ij}=a_{ji}$) and each element is distributed according to
\begin{equation}\label{sparsedef}
a_{ij} \;=\;
\frac{\gamma}{q}
\begin{cases}
1 & \text{with probability } \frac{q^2}{N}
\\
0 & \text{with probability } 1 - \frac{q^2}{N}\,, 
\end{cases} \quad q := \sqrt { pN }.
\end{equation}
Here $\gamma \deq (1 - q^2 / N)^{-1/2}$ is a scaling 
introduced for convenience to compare with Wigner matrices.  
We also assume that $q= \sqrt{pN}\ge (\log N)^{C\log\log N}$, in 
particular the Erd\H{o}s-R\'enyi  graph is connected.

\begin{theorem}[Local semicircle law for Erd\H{o}s-R\'enyi
 matrix]\cite[Theorem 2.9]{EKYY1}\label{lscer}{}
Let $m(z)$ denote the  Stieltjes transform of the empirical 
eigenvalue distribution of the matrix $A$ and let $G(z)=(A-z)^{-1}$ be its resolvent.
Assume that the spectral parameter $z=E+i\eta$ satisfies $|E|\le 5$ and 
$(\log N)^L N^{-1}\le\eta \le 3$ with a sufficiently large constant $L$. 
Then we have the following two estimates:

(i) The Stieltjes transform of the empirical 
eigenvalue distribution of  $A $  satisfies 
\be\label{LambdafinalER} 
\P \Big\{ |m(z)-m_{sc}(z)| 
 \ge (\log N)^{C}  \Big [ \frac{1}{N\eta} + \frac 1 q  \Big ] \Big\}  
\le  C\exp{\big[-c(\log N)^{c} \big]}. 
\ee

(ii) The individual  matrix elements of
the Green function  satisfy that 
\be\label{LambdaodfinalER}
\P  \left\{ \max_{i,j}|G_{ij}(z)-\delta_{ij}m_{sc}(z)| \geq 
(\log N)^{C} \Big [ \sqrt{\frac{\im m_{sc}(z)  }{N\eta}} + \frac{1}{N\eta} + \frac 1 q
 \Big]  \right\}  
\leq  C\exp{\big[-c(\log N)^{c} \big]}.
\ee
\end{theorem}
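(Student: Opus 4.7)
The proof would follow the self-consistent-equation strategy used to establish Theorem~\ref{lsc}, with two adaptations forced by sparsity. First, I would subtract the mean: with $u := N^{-1/2}(1,\ldots,1)^T$ and $f := \gamma q$, write $A = H + f\, u u^T$, so that $H = A - \E A$ has independent centered entries with variance exactly $1/N$. The rank-one correction $f\, uu^T$ shifts only one eigenvalue (by $O(q)$) and perturbs the Stieltjes transform and each individual Green function entry by at most $O((N\eta)^{-1})$ via the Sherman--Morrison formula. Hence it suffices to prove (i) and (ii) for the centered matrix $H$.

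I would then rerun the derivation of \eqref{1row}--\eqref{66} verbatim for $H$. The Schur-complement identity gives
\[
G_{ii} \;=\; \frac{1}{-z - m_{sc} - \big(\tfrac{1}{N}\sum_j v_j + Z_i - h_{ii} + O(N^{-1})\big)},
\]
where $v_i = G_{ii} - m_{sc}$ and $Z_i$ is defined as in \eqref{Zdefold}, with analogous formulas for the off-diagonal entries in terms of bilinear forms $Z_{ij}$. What changes in the sparse regime is the control of $Z_i$, $Z_{ij}$, and $h_{ii}$: since $\E|\sqrt{N}h_{ij}|^k \lesssim (\sqrt{N}/q)^{k-2}$ for $k\ge 3$, the higher moments of the normalized entries blow up, and the Hanson--Wright-type concentration used in the Wigner proof must be replaced by a sparse version.

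The key new technical input I would prove is the following sparse large-deviation estimate: for any $B$ independent of the $i$-th row,
\[
\P\Bigg(\bigg|\sum_{k,l\ne i}\big(h_{ik} B_{kl} h_{li} - \E_i h_{ik} B_{kl} h_{li}\big)\bigg| \ge (\log N)^{C}\bigg[\sqrt{\frac{1}{N^2}\sum_{k,l}|B_{kl}|^2} + \frac{\max_{k,l}|B_{kl}|}{q}\bigg]\Bigg) \le e^{-c(\log N)^{c}}.
\]
The first term is the CLT contribution already seen in \eqref{ZZ}--\eqref{egy}; the second is new and reflects that a single ``spike'' $h_{ik} \sim \gamma/q$ contributes a deterministic-size term $\sim B_{kk}/q$. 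Applying this with $B = G^{(i)}$ and invoking \eqref{egy} in the form $N^{-2}\sum_{k,l}|G^{(i)}_{kl}|^2 = (N\eta)^{-1}\im m^{(i)}(z)$, together with the analogous estimate for the off-diagonal $Z_{ij}$, yields the pointwise matrix-element bound (ii) via the standard stability analysis of the scalar equation $m_{sc} + (z + m_{sc})^{-1} = 0$, bootstrapped from $\eta\sim 1$ down to $\eta \ge (\log N)^L/N$.

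For the averaged estimate (i), I would apply a sparse version of the Fluctuation Averaging Lemma~\ref{cancel} to $\bar Z = N^{-1}\sum_i Z_i$: cancellations among the $Z_i$'s should upgrade the CLT factor from $(N\eta)^{-1/2}$ to $(N\eta)^{-1}$, while the $1/q$ contribution, driven by rare spike events rather than by averaging fluctuations, is expected to survive unchanged. I anticipate the main obstacle to be precisely here: the Wigner proof of Lemma~\ref{cancel} uses repeated cumulant expansions in a regime where every entry moment is uniformly bounded, and extending it to the sparse setting requires tracking the $q$-dependence order by order and verifying that only one power of $1/q$ (rather than a proliferating product) appears in the final bound. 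Once this sparse averaging estimate is in hand, substituting it into the averaged self-consistent equation \eqref{66} for $\bar v = m - m_{sc}$, together with the hypothesis $|E|\le 5$ (which yields stability away from the spectral edge) and the assumption $q \ge (\log N)^{C\log\log N}$ (needed to close the bootstrap), produces the Stieltjes-transform bound (i).
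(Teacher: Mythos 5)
Your overall plan --- center the matrix, rerun the Schur-complement/self-consistent-equation analysis of Section~\ref{sec:refined}, replace the Gaussian-type quadratic-form concentration by a sparse large-deviation estimate carrying the extra $\max_{k,l}|B_{kl}|/q$ term, and adapt the Fluctuation Averaging Lemma so that only the $(N\eta)^{-1/2}$ part improves while the $1/q$ part survives --- is indeed the route of the cited paper \cite{EKYY1} (this review does not reprove Theorem~\ref{lscer}; it quotes it), and your identification of the sparse quadratic-form bound as the key new input, with the $1/q$ term coming from the diagonal contribution $\sum_k(|h_{ik}|^2-\E|h_{ik}|^2)B_{kk}$, is correct.

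There is, however, a genuine gap at your very first step, the disposal of the nonzero mean. For the averaged estimate (i) the interlacing bound $|m_A-m_H|\le C(N\eta)^{-1}$ is fine, but for the entrywise estimate (ii) the Sherman--Morrison correction is $-f(G^He)_i(G^He)_j\big/\big(1+f\langle e,G^He\rangle\big)$ with $e=N^{-1/2}(1,\dots,1)^T$ and $f=\gamma q$, and this is \emph{not} $O((N\eta)^{-1})$ ``for free''. From the entrywise law for $H$ alone one only gets $|(G^He)_i|\le N^{-1/2}\sum_k|G^H_{ik}|\lesssim \sqrt{N}\,\Psi$ and $|\langle e,G^He\rangle-m_N|\lesssim N\Psi$, where $\Psi$ denotes the entrywise error $\sqrt{\im m_{sc}/(N\eta)}+1/q+1/(N\eta)$; with these crude bounds the numerator is far too large and the denominator $1+f\langle e,G^He\rangle$ cannot even be shown to be nonzero (the trivial alternative $\|G^He\|_2^2=\eta^{-1}\im\langle e,G^He\rangle$ only yields a correction of size $1/\eta$). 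What is actually needed is control of Green-function \emph{row sums}, i.e.\ an isotropic-type statement $\langle e,G^He\rangle\approx m_{sc}$ and $(G^He)_i\approx m_{sc}N^{-1/2}+\mathrm{small}$, which does not follow by summing entrywise errors but requires exploiting the cancellation in $\sum_k G_{ik}$ (equivalently, carrying the mean through the self-consistent equation for $A$ itself, where cross terms of the form $\frac{f}{N}\sum_{k,l}h_{ik}G^{(i)}_{kl}$ must be estimated with the vector $G^{(i)}e$ as input). This is exactly where the original paper invests substantial extra work, and Section~\ref{sec:ER} of this review explicitly warns that the nonzero means require ideas beyond the Wigner sketch. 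A secondary inaccuracy: the hypothesis $|E|\le 5$ does \emph{not} keep you away from the spectral edges, so the stability factor $|1-m_{sc}^2|\sim\sqrt{|E^2-4|+\eta}$ degenerates inside the stated domain and the bootstrap in $\eta$ must use the edge-adapted dichotomy argument rather than the bulk stability you invoke; the rest of your outline (sparse large deviations, sparse fluctuation averaging) is sound modulo the technical verification you yourself flag.
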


Compared with the  local semicircle law, 
Theorem \ref{lsc}, there is an extra factor $1/q$ appearing in the error estimates
of Theorem \ref{lscer}.
This extra  error term affects  
 the rigidity estimate of eigenvalues, and \eqref{7.1} becomes 
\begin{equation} \label{rigidity for large phi}
\abs{\mu_j - \gamma_j} \;\leq\; (\log N)^{C}   \Big [ N^{-2/3}   j^{-1/3} + q^{-2} \Big ] \,,
\qquad  j\le N/2,
\end{equation}
for $q = \sqrt {p N}  \gg N^{1/3}$.  We also have an estimate 
for the regime $q \le N^{1/3}$ but that is weaker.
Moreover,
  under the assumption  
$q \gg  N^{1/3}$,  both bulk and edge universality are proved
(see Theorem 2.5 and 2.7 in \cite{EKYY1}). 
It is well-known that the largest eigenvalue $\la_N$ of $A$ 
satisfies
\begin{equation} \label{position of mu max}
\la_N \;=\; \gamma q + \frac{1}{\gamma q} + o(1)\,,
\end{equation}
hence  it is located far away from the bulk spectrum.
Therefore the edge universality for  Erd\H{o}s-R\'enyi matrices
refers to the {\it second} largest eigenvalue instead of the largest one. 
Since the matrix elements of $A$ have  nonzero means, both the edge and bulk  universality require 
substantial new ideas in addition to those we have sketched. 
  We refer the interested readers to the original papers \cite{EKYY1, EKYY2}
for more detailed explanations.

\section{Historical Remarks}\label{sec:hist}

Finally we summarize the recent  history  related to the  universality 
of local eigenvalue statistics of Wigner matrices. 
The three-step approach was first introduced in \cite{EPRSY} in the context
 of Hermitian Wigner matrices and it  
 led to the first proof of the  Wigner-Dyson-Gaudin-Mehta  conjecture for Hermitian Wigner matrices.
 It works 
whenever the   distributions of the matrix elements are  smooth.   This approach 
was followed by all later works on the bulk universalities.  
We now review the history of Steps 1-3 
separately 
 and we  start with the history of Step 1, the local semicircle law.

The semicircle law was proved by Wigner for energy windows of order  one. 
Various improvements were made to shrink  the spectral windows;   in particular, 
results down  to scale $N^{-1/2}$ were obtained by \cite{BMT} and \cite{GZ}. 
The result at the  optimal scale, $N^{-1}$,  referred to as 
the local semicircle law,
 was established for Wigner
 matrices in a series of papers  \cite{ESY1, ESY2, ESY3}. 
The  method  was based on a self-consistent equation for 
the Stieltjes  transform of the eigenvalues, $m(z)$,  and the continuity in the 
imaginary part of the spectral parameter $z$.  As a by-product, the optimal
eigenvector delocalization estimate was  proved. 
In order to deal with the  generalized Wigner matrices, we needed to consider 
the  self-consistent equation of $G_{ij}(z)$,  the 
matrix elements of the Green function,   since there is no closed equation for $m(z) = N^{-1} \tr G(z) $ 
\cite{EYY, EYY2}.  In particular,   this method implied the optimal rigidity estimate 
of eigenvalues in the bulk  in \cite{EYY2} and 
up to   the edges in \cite{EYYrigi}. The  
estimate on $G_{ii}$ provided a simple alternative proof of the eigenvector delocalization estimate. 
The extension of the local semicircle law to the Erd\H os-R\'enyi matrices was recently made  in 
\cite{EKYY1}.

 We now review the history of Step 2. Recall that
{\it Hermitian}   Gaussian divisible ensembles  are  matrices of the form 
$e^{-t/2}H_0+ (1-e^{-t})^{1/2} U$,   where $U$ is the GUE and $H_0$ is a 
Wigner ensemble. 
The universality of this ensemble  for  a  large 
class of $H_0$ and  for parameters $t$ of order one   was  proved by Johansson \cite{J}.
 It was extended to {\it complex} sample 
covariance matrices by 
Ben Arous and  P\'ech\'e \cite{BP}. 
There were two major restrictions of this
 method: 1. The Gaussian component 
was fairly large, it was required  to be of order one independent of $N$; 
2. The method relies on an explicit formula by Br\'ezin-Hikami \cite{BH}
 for the correlation functions of 
eigenvalues.  This formula originates in the Harish-Chandra-Itzykson-Zuber integral \cite{IZ}
and it 
 is valid only for Gaussian divisible ensembles with
   unitary  invariant Gaussian component.  
The size of the 
Gaussian component was reduced to $N^{-1+ \e}$ in \cite{EPRSY} by using 
an improved formula for correlation functions and 
the local semicircle law from  \cite{ESY1, ESY2, ESY3}. 

To eliminate the usage of an explicit formula, 
a conceptual approach for Step 2   via the local ergodicity of
 Dyson Brownian motion was    initiated  in \cite{ESY4}.  
In this paper, the first version of the local  relaxation flow was
introduced, but it was rather complicated. 
 In \cite{ESYY} we found a much simpler way to enhance the convexity of the Dyson
Brownian motion and we proved
a general theorem for local ergodicity of  DBM 
and related flow, i.e., 
Theorem  \ref{thm:DBM}. 
 This theorem applies to  all classical 
ensembles, i.e.,  real and complex Wigner matrices,   real and complex 
 sample covariance  matrices  and quaternion Wigner matrices. 
The local relaxation   flow in the simple form  \eqref{tl}  first appeared in \cite{ESYY}.  
 The relaxation time to local equilibrium proved in these two papers was not optimal;
 the optimal relaxation time, conjectured by Dyson,  was obtained
later in \cite{EYYrigi}.

The  third and final step is to approximate the local eigenvalue 
distribution of  a general Wigner matrix by that of a Gaussian divisible one. The first approximation 
result was obtained via the reversal heat flow in \cite{EPRSY}
 which required some smoothness of the distribution 
of matrix elements. Shortly after,  Tao and Vu  \cite{TV}, 
proved  a comparison theorem with a four moment matching condition.   
Instead of using a Gaussian divisible ensemble  with a small ($N^{-1 + \e}$) Gaussian 
component, they  relied  on    Johansson's result  \cite{J} to provide   Hermitian Gaussian 
divisible ensembles  for comparison.
This  proved the  universality of  Hermitian  Wigner matrices, provided that 
the  distributions of matrix elements have  vanishing third moment and 
  are supported on at least three points.
These 
conditions were removed in \cite{ERSTVY} by 
combining the arguments 
 of \cite{EPRSY} and \cite{TV}.

 Due  to the lack  of a  Br\'ezin-Hikami  type  formula for the  symmetric matrices, 
there was no extension of Johansson's result  \cite{J} to this case and the universality for
 symmetric Wigner ensembles  was 
much more difficult to prove.  However,  the 
result of \cite{TV}   implies that  the local eigenvalue statistics of symmetric 
Wigner matrices
and  GOE are the same,   but under the restriction that 
the first four moments of the matrix elements  exactly match those of GOE.
The resolution of the Wigner-Dyson-Gaudin-Mehta conjecture for symmetric matrices, i.e., 
Theorem~\ref{thm:DBM} for real symmetric matrices, 
was obtained in \cite{ESY4, EYY}. In these papers, two  new ideas were introduced: 
the local  relaxation flow  \cite{ESY4} 
 and   the Green function comparison theorem \cite{EYY}. 
 Starting from the paper \cite{EYY},  the variances 
were allowed to vary and the universality was extended to  generalized Wigner matrices. 
The real Bernoulli random 
matrices required a more refined argument \cite{EYY2}. 
Finally, the technical condition assumed in all these papers, i.e., 
that the probability distributions of the matrix 
elements decay subexponentially, 
was reduced to the $(4+ \e)$-moment assumption \eqref{4+e} by using  the
universality of Erd\H{o}s-R\'enyi matrices \cite{EKYY2}.

The Green function comparison theorem, Theorem \ref{comparison}, 
 uses the same four moment conditions which  appeared  earlier in  \cite{TV}, but it 
compares matrix elements of Green 
functions at a fixed energy  and not just traces of Green functions which carry information
 on  eigenvalues near a fixed energy.   The result of \cite{TV}, on the other hand,  concerns
individual eigenvalues with  fixed labels. Both proofs 
used the local semicircle law and  Lindeberg's idea (introduced  in 
his proof of the central limit theorem).
Lindeberg's idea  in the context of random matrices
 appeared  earlier in a proof of  the Wigner semicircle law by  Chatterjee \cite{Ch}. 
The approach \cite{TV} requires additional difficult estimates due to singularities
 from neighboring eigenvalues,  
but  the  Green function comparison theorem  follows directly from the local semicircle law 
in Step 1, i.e.,  Theorem \ref{lsc},  via standard resolvent expansions. 
The difficulties associated with the singularities
of eigenvalue resonances  are  completely absent  in the Green function comparison theorem.  
 Finally, we mention that Green function comparison can also yield comparison of eigenvalues 
with fixed labels, see the recent work by Knowles and Yin \cite{KY}.

The edge universality for Wigner matrices was first  proved 
via the moment method by Soshnikov \cite{Sosh} (see also the earlier work \cite{SS})
for Hermitian and symmetric   ensembles with  symmetric distributions.
By combining the moment 
method and Chebyshev polynomials, Sodin \cite{So1, So2}
proved edge universality of certain band matrices  and some special class
of sparse matrices 
  with symmetric  distribution. 
The symmetry assumption  was partially removed in 
\cite{P1, P2}.
 The edge universality
without any symmetry assumption
 was proved in \cite{TV2} under the condition 
that the distribution of matrix element is subexponential  decay and  the first 
three moments match those of  a Gaussian distribution.
  The subexponential decay condition is not optimal for
edge universality, in fact the finiteness of the fourth moment was conjectured to be sufficient.
For  Gaussian divisible  Hermitian ensembles this was proved in \cite{Joh}.
 This is optimal, since
on the other hand, the result by  Auffinger, Ben Arous and P\'ech\'e  \cite{ABP}  
 showed that the distribution of the largest eigenvalues converges to
 a Poisson process if the entries 
have at most $4-\e$ moments.
For Wigner matrices with arbitrary symmetry class, 
the edge universality was proved under the 
 sole assumption that the matrix entries 
have $12 + \e$ moments \cite{EKYY2}. Finally, we mention that extension of universality to eigenvectors near the edge 
was obtained by Knowles and Yin \cite{KY} under the two moment matching condition and with four  moment matching  
condition in \cite{TV4}.

Although we have focused only on Wigner matrices and
$\beta$-ensembles,
the ideas summarized in this review should be applicable to a wide class of  matrix ensembles. 
We have already mentioned some natural open questions related to possible improvements
of our results. These concern removing some technical  conditions
such as (i) the restriction $q\gg N^{1/3}$ in the bulk universality of the Erd{\H o}s-R\'enyi
matrix; (ii) the  $12+\e$ moment condition  for edge universality.
A more ambitious goal would be to prove universality for systems with some spatial
structure such as band matrices or  related models that may open up a 
path towards universality for random Schr\"odinger operators and other
realistic models of quantum chaos.

\thebibliography{hhhhh}
   
\bibitem{AGZ}  Anderson, G., Guionnet, A., Zeitouni, O.:  {\it An Introduction
to Random Matrices.} Studies in advanced mathematics, {\bf 118}, Cambridge
University Press, 2009.

\bibitem{APS} Albeverio, S., Pastur, L.,  Shcherbina, M.:
 On the $1/n$ expansion for some unitary invariant ensembles of random matrices,
 {\it Commun. Math. Phys.}  {\bf 224}, 271--305 (2001).

\bibitem{ABP} Auffinger, A., Ben Arous, G.,
 P\'ech\'e, S.: Poisson Convergence for the largest eigenvalues of
heavy-tailed matrices.
{\it  Ann. Inst. Henri Poincar\'e Probab. Stat.}
{\bf 45}  (2009),  no. 3, 589--610. 

\bibitem{BMT} Bai, Z. D., Miao, B.,
 Tsay, J.: Convergence rates of the spectral distributions
 of large Wigner matrices.  {\it Int. Math. J.}  {\bf 1}
  (2002),  no. 1, 65--90.

\bibitem{BakEme} Bakry, D., \'Emery, M.:  Diffusions hypercontractives.  In: {\it S\'eminaire
de probabilit\'es, XIX, 1983\slash 84, vol. 1123 of Lecture Notes in Math.}  Springer,
Berlin, 1985, pp. 177--206.

\bibitem{BP} Ben Arous, G., P\'ech\'e, S.: Universality of local
eigenvalue statistics for some sample covariance matrices.
{\it Comm. Pure Appl. Math.} {\bf LVIII.} (2005), 1--42.

\bibitem{BT}  Berry, M.V.,  Tabor, M.:  Level clustering in
 the regular spectrum, {\it Proc. Roy. Soc.}  {\bf A 356}  (1977) 375-394

\bibitem{BI} Bleher, P.,  Its, A.: Semiclassical asymptotics of 
orthogonal polynomials, Riemann-Hilbert problem, and universality
 in the matrix model. {\it Ann. of Math.} {\bf 150} (1999), 185--266.

\bibitem{BGS}
 Bohigas, O.; Giannoni, M.-J.; Schmit, C.: 
 Characterization of chaotic quantum spectra and universality of level 
fluctuation laws. {\it Phys. Rev. Lett.}
 {\bf  52}  (1984), no. 1, 1–4

\bibitem{BEY} Bourgade, P., Erd{\H o}s, Yau, H.-T.:
Universality of General $\beta$-Ensembles,  arXiv:1104.2272

\bibitem{BEY2} Bourgade, P., Erd{\H o}s, Yau, H.-T.:
Bulk Universality of General $\beta$-Ensembles with Non-convex Potential,
arxiv:1201.2283

\bibitem{BH} Br\'ezin, E., Hikami, S.: Correlations of nearby levels induced
by a random potential. {\it Nucl. Phys. B} {\bf 479} (1996), 697--706, and
Spectral form factor in a random matrix theory. {\it Phys. Rev. E}
{\bf 55} (1997), 4067--4083.

\bibitem{Ch}  Chatterjee, S.:  A generalization of the Lindeberg principle.
 {\it Ann. Probab.}   {\bf 34}
 (2006), no. 6, 2061--2076.

\bibitem{De1} Deift, P.: Orthogonal polynomials and
random matrices: a Riemann-Hilbert approach.
{\it Courant Lecture Notes in Mathematics} {\bf 3},
American Mathematical Society, Providence, RI, 1999

\bibitem{DG} Deift, P., Gioev, D.: Universality in random matrix theory for
 orthogonal and symplectic ensembles. Int. Math. Res. Pap. IMRP 2007, no. 2, Art. ID rpm004, 116 pp

\bibitem{DG1} Deift, P., Gioev, D.: Random Matrix Theory: Invariant
Ensembles and Universality. {\it Courant Lecture Notes in Mathematics} {\bf 18},
American Mathematical Society, Providence, RI, 2009

\bibitem{DKMVZ1} Deift, P., Kriecherbauer, T., McLaughlin, K.T-R,
 Venakides, S., Zhou, X.: Uniform asymptotics for polynomials 
orthogonal with respect to varying exponential weights and applications
 to universality questions in random matrix theory. 
{\it  Comm. Pure Appl. Math.} {\bf 52} (1999):1335--1425.

\bibitem{DKMVZ2} Deift, P., Kriecherbauer, T., McLaughlin, K.T-R,
 Venakides, S., Zhou, X.: Strong asymptotics of orthogonal polynomials 
with respect to exponential weights. 
{\it  Comm. Pure Appl. Math.} {\bf 52} (1999): 1491--1552.

\bibitem{DumEde} Dumitriu, I.,  Edelman, A.:
Matrix Models for Beta Ensembles,
{\it Journal of Mathematical Physics}  {\bf 43} (11)  (2002), 5830--5847

\bibitem{Dy1} Dyson, F.J.: Statistical theory of energy levels of complex
systems, I, II, and III. {\it J. Math. Phys.} {\bf 3},
 140-156, 157-165, 166-175 (1962).

\bibitem{DyB} Dyson, F.J.: A Brownian-motion model for the eigenvalues
of a random matrix. {\it J. Math. Phys.} {\bf 3}, 1191-1198 (1962)

\bibitem{Dy2}  Dyson, F.J.: Correlations between eigenvalues of a random
matrix. {\it Commun. Math. Phys.} {\bf 19}, 235-250 (1970).

\bibitem{EK} Erd{\H o}s, L., Knowles, A.:
Quantum Diffusion and Eigenfunction Delocalization in a
 Random Band Matrix Model.
Preprint. Arxiv:1002.1695

\bibitem{EK2}  Erd{\H o}s, L.,  A. Knowles, A.:
Quantum Diffusion and Delocalization for Band Matrices
 with General Distribution. 
{\it Annales Inst. H. Poincare}, {\bf 12} (7), 1227-1319 (2011)

\bibitem{EKYY1} Erd{\H o}s, L.,  Knowles, A.,  Yau, H.-T.,  Yin, J.:
 Spectral Statistics of Erd\H{o}s-R\'enyi Graphs I: Local Semicircle Law.
Preprint. Arxiv:1103.1919

\bibitem{EKYY2} Erd{\H o}s, L.,  Knowles, A.,  Yau, H.-T.,  Yin, J.:
Spectral Statistics of Erd{\H o}s-R\'enyi Graphs II:
 Eigenvalue Spacing and the Extreme Eigenvalues.
 Preprint. Arxiv:1103.3869

\bibitem{EPRSY}
Erd\H{o}s, L.,  P\'ech\'e, G.,  Ram\'irez, J.,  Schlein,  B.,
and Yau, H.-T., Bulk universality 
for Wigner matrices. 
{\it Commun. Pure Appl. Math.} {\bf 63}, No. 7,  895--925 (2010)

\bibitem{ERSTVY}  Erd{\H o}s, L.,  Ramirez, J.,  Schlein, B.,  Tao, T., 
Vu, V., Yau, H.-T.:
Bulk Universality for Wigner  Hermitian matrices with subexponential
 decay. {\it Math. Res. Lett.} {\bf 17} (2010), no. 4, 667--674.

\bibitem{ERSY}  Erd{\H o}s, L., Ramirez, J., Schlein, B., Yau, H.-T.:
 Universality of sine-kernel for Wigner matrices with a small Gaussian
 perturbation. {\it Electr. J. Prob.} {\bf 15},  Paper 18, 526--604 (2010)

\bibitem{ESY1} Erd{\H o}s, L., Schlein, B., Yau, H.-T.:
Semicircle law on short scales and delocalization
of eigenvectors for Wigner random matrices.
{\it Ann. Probab.} {\bf 37}, No. 3, 815--852 (2009)

\bibitem{ESY2} Erd{\H o}s, L., Schlein, B., Yau, H.-T.:
Local semicircle law  and complete delocalization
for Wigner random matrices. {\it Commun.
Math. Phys.} {\bf 287}, 641--655 (2009)

\bibitem{ESY3} Erd{\H o}s, L., Schlein, B., Yau, H.-T.:
Wegner estimate and level repulsion for Wigner random matrices.
{\it Int. Math. Res. Notices.} {\bf 2010}, No. 3, 436-479 (2010)

\bibitem{ESY4} Erd{\H o}s, L., Schlein, B., Yau, H.-T.: Universality
of random matrices and local relaxation flow. 
{\it Invent. Math.} {\bf 185} (2011), no.1, 75--119.

\bibitem{ESYY} Erd{\H o}s, L., Schlein, B., Yau, H.-T., Yin, J.:
The local relaxation flow approach to universality of the local
statistics for random matrices. 
Preprint arXiv:0911.3687

\bibitem{EY} Erd{\H o}s, L.,  Yau, H.-T.:  A comment  
on the Wigner-Dyson-Mehta bulk universality conjecture for Wigner matrices.
 Preprint arXiv:1201.5619

\bibitem{EYY} Erd{\H o}s, L.,  Yau, H.-T., Yin, J.: 
Bulk universality for generalized Wigner matrices. 
 Preprint arXiv:1001.3453

\bibitem{EYY2}  Erd{\H o}s, L.,  Yau, H.-T., Yin, J.: 
Universality for generalized Wigner matrices with Bernoulli
distribution.  {\it J. of Combinatorics,} {\bf 1} (2011), no. 2, 15--85

\bibitem{EYYrigi}  Erd{\H o}s, L.,  Yau, H.-T., Yin, J.: 
    Rigidity of Eigenvalues of Generalized Wigner Matrices , preprint
    arXiv:1007.4652

\bibitem{ER1} Erd{\H o}s, P., R\'enyi, A.: On Random Graphs.\ I.\ 
{\it Publicationes Mathematicae} {\bf 6}, 290--297 
(1959).

\bibitem{ER2} Erd{\H o}s, P., R\'enyi, A.: The Evolution of Random Graphs.
 {\it Magyar Tud. Akad. Mat. Kutat\'o Int.  
K\"ozl.} {\bf 5} 17--61 (1960).

\bibitem{Ey} Eynard, B.:   Master loop equations,
free energy and correlations for the chain of matrices. {\it
J. High Energy Phys.} {\bf 11}  2003, 018

\bibitem{FIK} Fokas, A. S., Its, A. R., Kitaev, A. V.:  The isomonodromy approach to
 matrix models in 2D quantum gravity. {\it Comm. Math. Phys.}  {\bf 147}  (1992),  395–430.

\bibitem{Gau} Gaudin, M.: Sur la loi limit de l'espacement des valeurs
propres d'une matrice al\'eatoire. {\it Nucl. Phys.} {\bf 25}, 447-458.

\bibitem{GZ} Guionnet, A., Zeitouni, O.:
Concentration of the spectral measure
for large matrices. {\it Electronic Comm. in Probability}
{\bf 5} (2000) Paper 14.

\bibitem{IZ} Itzykson, C., Zuber, J.B.: The planar approximation, II.
{\it J. Math. Phys.} {\bf 21} 411-421 (1980)

\bibitem{J} Johansson, K.: Universality of the local spacing
distribution in certain ensembles of Hermitian Wigner matrices.
{\it Comm. Math. Phys.} {\bf 215} (2001), no.3. 683--705.

\bibitem{Joh} Johansson, K.: Universality for certain Hermitian Wigner
matrices under weak moment conditions. Preprint 
{arxiv.org/abs/0910.4467}

\bibitem{KY} Knowles, A., Yin, J.: Eigenvector distribution of Wigner matrices. Preprint arXiv:1102.0057.

\bibitem{KS}  Kriecherbauer, T.,  Shcherbina, M.:
 Fluctuations of eigenvalues of matrix models and their applications.
 Preprint {\tt arXiv:1003.6121}

\bibitem{Lub} Lubinsky, D.S.: A New Approach to Universality Limits Involving Orthogonal 
Polynomials, Annals of Mathematics, 170(2009), 915-939.

\bibitem{M} Mehta, M.L.: {\it Random Matrices.} Third Edition, Academic Press, New York, 1991.

\bibitem{M2} Mehta, M.L.: A note on correlations between eigenvalues of a random matrix.
{\it Commun. Math. Phys.} {\bf 20} no.3. 245--250 (1971)

\bibitem{MG} Mehta, M.L., Gaudin, M.: On the density of eigenvalues
of a random matrix. {\it Nuclear Phys.} {\bf 18}, 420-427 (1960).

\bibitem{Mont} Montgomery, H.L.: The pair correlation of zeros of the zeta
function. Analytic number theory, Proc. of Sympos. in Pure Math. {\bf 24}),
Amer. Math. Soc. Providence, R.I., 181--193 (1973).

\bibitem{PS:97} Pastur, L., Shcherbina, M.: Universality of the local
eigenvalue statistics for a class of unitary invariant random
matrix ensembles. J. Stat. Phys. \textbf{86}, 109-147
(1997)

\bibitem{PS} Pastur, L., Shcherbina M.:
Bulk universality and related properties of Hermitian matrix models.
{\it J. Stat. Phys.} {\bf 130} (2008), no.2., 205-250.

\bibitem{RRV} Ramirez, J., Rider, B., Vir\'ag, B.: 
 Beta ensembles, stochastic Airy spectrum, and a diffusion. arXiv:math/0607331. To appear in
J. Amer. Math. Soc.

\bibitem{Sche} Schenker, J.:  {\it Eigenvector localization for random
band matrices with power law band width.} Commun.\ Math.\ Phys.\
{\bf 290}, 1065--1097 (2009).

\bibitem{P1}
P\'ech\'e, S., Soshnikov, A.: On the lower bound of the spectral norm 
of symmetric random matrices with independent entries. 
 {\it Electron. Commun. Probab.}  \textbf{13}  (2008), 280--290.

\bibitem{P2}
P\'ech\'e, S., Soshnikov, A.: Wigner random matrices with non-symmetrically
 distributed entries.  {\it J. Stat. Phys.}  \textbf{129}  (2007),  no. 5-6, 857--884.

\bibitem{Sch}  Shcherbina, M.:
Orthogonal and symplectic matrix models: universality and other properties. Preprint {\tt arXiv:1004.2765}

\bibitem{SS} Sinai, Y. and Soshnikov, A.: 
A refinement of Wigner's semicircle law in a neighborhood of the spectrum edge.
{\it Functional Anal. and Appl.} {\bf 32} (1998), no. 2, 114--131.

\bibitem{So1} Sodin, S.: The spectral edge of some random band matrices. Preprint.
 arXiv: 0906.4047

\bibitem{So2} Sodin, S.: The Tracy--Widom law for some sparse random matrices. Preprint.
arXiv:0903.4295

\bibitem{Sosh} Soshnikov, A.: Universality at the edge of the spectrum in
Wigner random matrices. {\it  Comm. Math. Phys.} {\bf 207} (1999), no.3.
 697-733.

\bibitem{Spe} Spencer, T.: Review article on random band matrices. Draft in
preparation.

\bibitem{TV} Tao, T. and Vu, V.: Random matrices: Universality of the 
local eigenvalue statistics.  {\it Acta Math.},
 {\bf 206} (2011), no. 1, 127–-204.

\bibitem{TV2} Tao, T. and Vu, V.: Random matrices: Universality 
of local eigenvalue statistics up to the edge. 
 {\it  Commun. Math. Phys.}  {\bf 298}  (2010),  no. 2, 549-–572. 

\bibitem{TV3} Tao, T. and Vu, V.: Random covariance matrices:
Universality of local statistics of eigenvalues. Preprint. arXiv:0912.0966

\bibitem{TV4} Tao, T. and Vu, V.:
    Random matrices: Universal properties of eigenvectors. Preprint.  arXiv:1103.2801

\bibitem{TV5} Tao, T. and Vu, V.:
The Wigner-Dyson-Mehta bulk universality conjecture for Wigner matrices.
 Preprint. arXiv:1101.5707

\bibitem{TW}  Tracy, C., Widom, H.: Level-Spacing Distributions and the Airy Kernel,
{\it Comm. Math. Phys.} {\bf 159} (1994), 151-174.

\bibitem{TW2}  Tracy, C., Widom, H.: On orthogonal and symplectic matrix ensembles,
{\it Comm. Math. Phys.} {\bf 177} (1996), no. 3, 727-754.

\bibitem{VV} Valk\'o, B.; Vir\'ag, B.:
  Continuum limits of random matrices and the Brownian carousel. {\it Invent. Math.}
{\bf  177} (2009), no. 3, 463-508.

\bibitem{Wid} Widom H.:  On the relation between orthogonal, symplectic and
 unitary matrix ensembles. {\it J. Statist. Phys.} {\bf 94} (1999), no. 3-4, 347--363.

\bibitem{W} Wigner, E.: Characteristic vectors of bordered matrices 
with infinite dimensions. {\it Ann. of Math.} {\bf 62} (1955), 548-564.

\bibitem{Wish} Wishart, J.: The generalized product moment distribution
in samples from a Normal multivariate population. {\it Biometrika}
{\bf 20A}, 32-52 (1928)

\bibitem{Y} Yau, H. T.: Relative entropy and the hydrodynamics
of Ginzburg-Landau models, {\it Lett. Math. Phys}. {\bf 22} (1991) 63--80.

\end{document}